\newcommand{\EE}{\mathbb{E}}
\newcommand{\PP}{\mathbb{P}}
\newcommand{\R}{\mathbb{R}}
\newcommand{\C}{\mathbb{C}}
\newcommand{\HH}{\mathbb{H}}
\newcommand{\N}{\mathbb{N}}
\newcommand{\D}{\mathbb{D}}
\newcommand{\Z}{\mathbb{Z}}
\newcommand{\TT}{\mathbb{T}}
\newcommand{\CC}{{\cal C}}
\newcommand{\pa}{\partial}
\newcommand{\K}{{\cal K}}
\newcommand{\F}{{\cal F}}
\newcommand{\no}{\noindent}
\newcommand{\Lo}{{\cal L}}
\def\eps{\varepsilon}
\def\til{\widetilde}
\def\ha{\widehat}
\def\sem{\setminus}
\def\lin{\overline}
\def\up{\upsilon}
\def\Up{\Upsilon}
\def\tl{\vartriangleleft}
\DeclareMathOperator{\sign}{sign} 
\DeclareMathOperator{\dist}{dist} 
 \DeclareMathOperator{\id}{id}
\DeclareMathOperator{\Imm}{Im } \DeclareMathOperator{\Ree}{Re }
\DeclareMathOperator{\mA}{m}
\DeclareMathOperator{\arcsinh}{arcsinh}
\newtheorem{Lemma}{Lemma}[section]
\newtheorem{Theorem}{Theorem}[section]
\newtheorem{Definition}{Definition}[section]
\newtheorem{Corollary}{Corollary}[section]
\newtheorem{Proposition}{Proposition}[section]
\numberwithin{equation}{section}
\newcommand{\BGE}{\begin{equation}}
\newcommand{\BGEN}{\begin{equation*}}
\newcommand{\EDE}{\end{equation}}
\newcommand{\EDEN}{\end{equation*}}
\begin{document}
\title{Decomposition of  Schramm-Loewner evolution along its curve}
\author{Dapeng Zhan\footnote{Research partially supported by NSF grants DMS-1056840 and Sloan fellowship}}
\affil{Michigan State University}
\maketitle
\abstract{We show that, for $\kappa\in(0,8)$, the integral of the laws of two-sided radial SLE$_\kappa$ curves through different interior points against a measure with SLE$_\kappa$ Green's function density is the law of a chordal SLE$_\kappa$ curve, biased by the path's natural length. We also show that, for $\kappa>0$, the integral of the laws of extended SLE$_\kappa(-8)$ curves through different interior points against a measure with a closed formula density restricted in a bounded set is the law of a chordal SLE$_\kappa$ curve, biased by the path's capacity length restricted in that set. Another result is that, for $\kappa\in(4,8)$, if one integrates the laws of two-sided chordal SLE$_\kappa$ curves through different force points on $\R$ against a measure with density on $\R$, then one also gets a law that is absolutely continuous w.r.t.\ that of a chordal SLE$_\kappa$ curve. To obtain these results, we develop a framework to study stochastic processes with random lifetime, and improve the traditional Girsanov's Theorem.\\
\\
\no Keywords: SLE, Girsanov's Theorem, Doob-Meyer decomposition
 }

\section{Introduction}
The Schramm-Loewner evolution (SLE), first introduced by Oded Schramm in 1999 (\cite{S-SLE}), is a one-parameter ($\kappa\in(0,\infty)$) family of measures on non-self-crossing curves, which has received a lot of attention over the past eighteen years. 
It has been shown that, modulo time parametrization,  several discrete random paths on grids (e.g., loop-erased random walk \cite{LSW}, critical percolation explorer \cite{Smir1,percolation-2}) have SLE as a scaling limit.

SLE is defined using Loewner's differential equation, and is originally parameterized by capacity. For the discrete random paths which converge to SLE, in order to show the convergence, people have to first reparameterize them by capacity and then prove that the reparametrized curve converge to SLE with capacity parametrization. The convergence does not take into consideration the discrete length of the path.

In order to upgrade the convergence results, Lawler and Sheffield introduced the natural  parametrization of SLE in \cite{LS}, and conjectured that those discrete random paths with their original length suitably rescaled, converge to the SLE with natural parametrization. Their construction used the Doob-Meyer decomposition, and they proved the existence of the natural  parametrization of SLE$_\kappa$ for  $\kappa <5.021...$. This result was later improved by \cite{LZ}, where it was shown that the natural parametrization of SLE$_\kappa$ exists for all $\kappa\in(0,8)$.

It was proved later in \cite{LR} that the natural parametrization agrees with the $d$-dimensional Minkowski content of the SLE$_\kappa$ curve, where $d=1+\frac\kappa 8$ is the Hausdorff dimension of the curve (cf.\ \cite{dim-SLE}). It was proved recently (cf.\ \cite{LERW-NP2,LERW-NP3,LERW-NP4}) that the loop-erased random walk with natural length converges to SLE$_2$ with  natural parametrization. The work uses an earlier result on the convergence of the density of the loop-erased random walk to the Green's function for SLE$_2$ (cf. \cite{LERW-NP}).

There are two major versions of SLE: chordal SLE and radial SLE. Most of the study focuses on chordal SLE, which describes a curve in a simply connected domain from one prime end (cf.\ \cite{Ahl}) to another prime end. Two-sided radial SLE$_\kappa$ and SLE$_\kappa$ Green's function for $\kappa\in(0,8)$ were introduced in \cite{LS}. A two-sided radial SLE$_\kappa$ curve has two arms: the first arm from a prime end to an interior point is a chordal SLE$_\kappa(\kappa-8)$ process, and the second arm from the interior point to another prime end is a chordal SLE$_\kappa$ curve conditioned on the first arm. It can be understood as a chordal SLE$_\kappa$ curve conditioned to pass through a fixed interior point. While that event has probability zero, some limiting procedure was used to make this idea rigorous. SLE$_\kappa$ Green's function was defined by a closed formula, and turned out to be the density of the SLE$_\kappa$ curve with natural parametrization. These two objects have discrete analogues. The two-sided radial SLE$_\kappa$ curve corresponds to the discrete random path conditioned to pass through a fixed vertex, and the SLE$_\kappa$ Green's function corresponds to the density of the path.

Field recently proved in \cite{Fie} that, for $\kappa\in(0,4]$,  if one integrates the laws of two-sided radial SLE$_\kappa$ in a bounded analytic domain $D$ passing through different interior points (with the two ends fixed) against the measure with density (w.r.t.\ the Legesgue measure on $\C$) being the SLE$_\kappa$ Green's function in $D$, then one gets the law of a chordal SLE$_\kappa$ curve biased by the curve's length in the {\it natural parametrization}. This is analogous  to a simple fact of discrete random paths: if one integrates the laws of the path conditioned to pass through different fixed vertices against the probability that the path passes through each fixed vertex, one should get a measure on paths, which is absolutely continuous w.r.t.\ the law of the original discrete random path, and the Radon-Nikodym derivative is the total number of vertices on the path, which is due to the repetition of counting. 

This paper is motivated by Field's work. We extend his result from $\kappa\in(0,4]$ to $\kappa\in(0,8)$ (see Corollary \ref{Cor-general}) using a new approach. We do not need to assume that the domain is bounded or analytic. The main tools used here are from Probability Theory. 

Our approach is different from Field's  in that we develop a theory of stochastic processes with random lifetime, which is purely probability. The computation here is simpler because the major work: the Doob-Meyer decomposition for the  definition of the natural parametrization was done earlier in \cite{LS,LZ}. On the other hand,
Field's proof  used some SLE technique such as an escape estimate of SLE derived in his joint work \cite{Fie-Law} with Lawler. That results in the technical assumption in his paper that $\kappa\le 4$ and that the domain is bounded with analytic boundary. Another difference is that we study the measures on the space of curve-point pairs instead of just on curves. This makes the main theorem more convenient to be applied later in \cite{loop} to construct SLE loop measures.

After the main theorem, we find other applications of the new technique. We define the extended chordal SLE$_\kappa(\rho)$ curve in the upper half plane $\HH$ for $\rho\le \frac\kappa 2-4$, which is composed of two arms: the first arm growing from $0$ to $z_0\in\HH$ is a chordal SLE$_\kappa(\rho)$ curve, and the second arm growing from $z_0$ to $\infty$ is a chordal SLE$_\kappa$ curve conditioned on the first arm. In particular, a two-sided radial SLE$_\kappa$ curve is an extended chordal SLE$_\kappa(\kappa-8)$ curve. We prove that, for any $\kappa>0$, there is a positive function $G^{\kappa,-8}(z)$ with closed formula such that for any measurable $U\subset\HH$ on which $G^{\kappa,-8}$ is integrable, if one integrates the laws of extended chordal SLE$_\kappa(-8)$ curve through different $z$ against a measure with density ${\bf 1}_U G^{\kappa,-8}(z)$, then one gets the law of a chordal SLE$_\kappa$ curve biased by $C_{\kappa,1}$ times the total time that the curve spends in $U$ in the {\it capacity parametrization} (see Corollary \ref{Cor3}), where $C_{\kappa,1}$ is a positive constant depending only on $\kappa$. 

The above two main results of this paper immediately imply that, if we sample a point on a chordal SLE$_\kappa$ curve in $\HH$ according to a law, which is absolutely continuous w.r.t.\ the natural parametrization (resp.\ capacity parametrization), and stop the curve at that point, then we get a random curve, whose law is absolutely continuous w.r.t.\ that of a chordal SLE$_\kappa(\kappa-8)$ (resp.\ SLE$_\kappa(-8)$) curve in $\HH$.

The connection between SLE$_\kappa$ near its tip at a fixed capacity time and SLE$_\kappa(-8)$ was derived earlier in \cite[Proposition 3.10]{mating}. The tip behavior of SLE was also studied in \cite{tip} for $\kappa\in(0,4)$. What new here is that we derive the Green's function for the SLE in capacity parametrization up to a multiplicative constant. 


Another application of the new technique is to study the intersection of SLE curve with the boundary. Using a Doob-Meyer decomposition, Alberts and Sheffield constructed in \cite{AlbertsSheffield} a measure supported by the intersection of an SLE$_\kappa$ curve $\gamma$ in $\HH$ for $\kappa\in(4,8)$ with $\R$, and conjectured that the measure agrees with the $(2-\frac8\kappa)$-dimensional Minkowski content of $\gamma\cap\R$. Their work used two-sided chordal SLE$_\kappa$, which can be understood as a chordal SLE$_\kappa$ curve conditioned to pass through a fixed point on the boundary. Using their result, we prove in this paper that, if one integrates the laws of two-sided chordal SLE$_\kappa$ curve in $\HH$ from $0$ to $\infty$ through different $x\in\R$ against a measure with a suitable density function (known as boundary Green's function for SLE) against the Lebesgue measure on a bounded interval $I\subset\R$, then one gets a measure on curves, which is absolutely continuous w.r.t.\ the law of a chordal SLE$_\kappa$ curve, and the Radon-Nikodym derivative is the measure constructed in \cite{AlbertsSheffield} of the intersection of the curve with $I$.

The new technique has applications beyond the SLE area. For example, we use it to decompose a planar Brownian motion restricted in a simply connected domain.

In the author's recent preprint \cite{loop}, the decomposition of chordal SLE into two-sided radial SLE is used to construct SLE loop measures. 

The paper is organized as follows. In Section \ref{section-lifetime}, in order to study the driving functions of  SLE$_\kappa(\rho)$ curves, we develop a framework on stochastic processes with random lifetime. We introduce the ``local absolute continuity'' between these processes, and extend the traditional Girsanov's theorem. In Section \ref{section-SLE}, we review the definitions and basic properties of SLE$_\kappa$  and its variants,  SLE$_\kappa(\rho)$ processes.  In Section \ref{section-NP}, we prove our main result about natural parametrization and two-sided radial SLE curves. In Section \ref{section-capacity}, we prove the result about capacity parametrization and extended chordal SLE$_\kappa(-8)$ curves. In Section \ref{section-boundary}, we show the application on boundary measure and two-sided chordal SLE curves. In Section \ref{Section-BM}, we use the technique to decompose planar Brownian motions. In the appendix, we prove the transience property of chordal SLE$_\kappa(\rho)$ curves.

\section*{Acknowledgement} The author acknowledges the support from the National Science Foundation under the grant DMS-1056840 and the support from the Alfred P.\ Sloan Foundation.

\section{Stochastic Processes with Random Lifetime} \label{section-lifetime}
For $0<T\le \infty$, let $C([0,T))$ denote the space of real valued continuous functions on $[0,T)$. Let
$$\Sigma=\bigcup_{0<T\le \infty} C([0,T)).$$
For each $f\in\Sigma$, let $T_f$ be such that $[0,T_f)$ is the domain of $f$.

We define two basic operations on $\Sigma$: killing and continuing.
For $0<\tau\le\infty$, we define the killing map $\K_\tau:\Sigma\to\Sigma$ such that if $g=\K_\tau(f)$, then $T_g=\tau\wedge T_f$ and $g=f|_{[0,T_g)}$.
Let $\Sigma^\oplus=\{f\in\Sigma:T_f<\infty,f(T_f^-):=\lim_{t\to T_f^-} f(t)\in\R\}$ and $\Sigma_\oplus=\{f\in\Sigma:f(0)=0\}$. For example, if $T_f>\tau>0$, then $\K_\tau(f)\in\Sigma^\oplus$. For $f\in\Sigma^\oplus$ and $g\in\Sigma_\oplus$, we may continue $f$ using $g$ and get the function $f\oplus g\in\Sigma$, which is defined by $T_{f\oplus g}=T_f+T_g$ and
$$f\oplus g(t)=\left\{\begin{array}{ll} f(t), & 0\le t<T_f;\\
f(T_f^-)+g(t-T_f), & T_f\le t<T_f+T_g.
\end{array}
\right.$$
Sometimes we want to record the time that the two functions are joined together. For this purpose, we define $f\ha\oplus g=(f\oplus g,T_f)$. Then we can use $f\ha\oplus g$ to recover $f$ and $g$.

For $0\le t< \infty$, let $\F_t^0$ be the $\sigma$-algebra generated by $$\{f\in\Sigma: s<T_f, f(s)\in U\},\quad 0\le s\le t, U\in\cal{B}(\R).$$
Then  $(\F_t^0)$ is a filtration on $\Sigma$. Let $\F^0=\sigma(\bigcup_{0\le t<\infty}\F_t^0)$. We will mainly work on the measurable space $(\Omega,\F^0)$ or its completion w.r.t.\ a certain measure. Every probability measure on $(\Sigma,\F^0)$ is the law of a continuous stochastic process with random lifetime. For any measure $\mu$ on $(\Sigma,\F^0)$, we use $\F^\mu$ and $\F^\mu_t$ to denote the $\mu$-completion of $\F^0$ and $\F^0_t$, respectively.

The continuing maps $(f,g)\mapsto f\oplus g$ and $(f,g)\mapsto f\ha\oplus g$ are measurable. If $\mu$ and $\nu$ are $\sigma$-finite measures supported by $\Sigma^\oplus$ and $\Sigma_\oplus$, respectively, we use $\mu\oplus\nu$ and $\mu\ha\oplus \nu$ to denote the pushforward measures of the product measure $\mu\otimes\nu$ under the maps $(f,g)\mapsto f\oplus g$ and $(f,g)\mapsto f\ha\oplus g$, respectively.

Let's recall an important notion in Probability: kernel. Suppose $(U,{\cal U})$ and $(V,{\cal V})$ are two measurable spaces. A kernel from $(U,{\cal U})$ to $(V,\cal V)$ is a map $\nu:U\times {\cal V}\to[0,\infty]$ such that (i) for every $u\in U$, $\nu(u,\cdot)$ is a measure on $(V,\cal V)$, and (ii) for every $F\in\cal V$, $\nu(\cdot,F)$ is $\cal U$-measurable.
Let $\mu$ be a $\sigma$-finite measure on $(U,\cal U)$. Let ${\cal U}^\mu$ be the $\mu$-completion of $\cal U$. A $\mu$-kernel from $(U,{\cal U})$ to $(V,\cal V)$ is a kernel from $(U^\mu,{\cal U}^\mu\cap U^\mu)$ to $(V,\cal V)$, where $ U^\mu\subset U$ is such that $U\sem U^\mu$ is a $\mu$-null. The $\mu$-kernel is said to be finite if for $\mu$-a.s.\ every $u\in U$, $\nu(u,V)<\infty$; and is said to be $\sigma$-finite if there is a sequence $F_n\in\cal V$, $n\in\N$, with $V=\bigcup F_n$ such that for any $n\in\N$, and $\mu$-a.s.\ every $u\in U$, $\nu(u,F_n)<\infty$. If $\nu$ is a $\sigma$-finite $\mu$-kernel from $(U,{\cal U})$ to $(V,\cal V)$, then we may define a measure $\mu\otimes \nu$ on ${\cal U}\times{\cal V}$ such that
$$\mu\otimes \nu(E\times F)=\int_E \nu(u,F)d\mu(u),\quad E\in\cal U,\quad F\in \cal V.$$
This new measure is first defined on the semi-ring $\{E\times F:E\in\cal U,F\in\cal V\}$, and then extended to a measure on ${\cal U}\times{\cal V}$. Carath\'eodory's extension theorem guarantees the existence of the extension. The $\sigma$-finiteness of $\mu$ and $\nu$ ensures that the extension is unique, and $\mu\otimes\nu$ is also $\sigma$-finite. We use $\mu\cdot\nu$ to denote the marginal of $\mu\otimes\nu$ on $(V,\cal V)$, i.e., $\mu\cdot \nu( F)=\int_U \nu(u,F)d\mu(u)$, $F\in \cal V$.
If $\nu$ is a $\sigma$-finite measure on $(V,\cal V)$, and $\mu$ is a $\sigma$-finite $\nu$-kernel from $(V,\cal V)$ to $(U,\cal U)$, then we use $\mu\overleftarrow{\otimes}\nu$ to denote the pushforward measure on ${\cal U}\times{\cal V}$ of $\nu\otimes \mu$ under the map $(v,u)\mapsto (u,v)$.

The killing map $\K:(f,r)\mapsto \K_r(f)$ is also measurable. If $\nu$ is a $\sigma$-finite $\mu$-kernel from $\Sigma$ to $(0,\infty)$, we use $\K_\nu(\mu)$ to denote the pushforward measure of $\mu\otimes \nu$ under  $\K$.

Let $\Sigma_t=\{f\in\Sigma:T_f>t\}$, $t\ge0$. Then $(\Sigma_t)$ is a decreasing family of subspaces of $\Sigma$ with $\Sigma_0=\Sigma$ and $\Sigma_\infty:=\cap_{t=0}^\infty \Sigma_t=C([0,\infty))$. We will be interested in the restriction of $\F^0_t$ to $\Sigma_t$, i.e., $\F_t^0\cap\Sigma_t$. Note that $\bigcup_{0\le t<\infty} \F^0_t\cap\Sigma_t$ is a $\pi$-system, which generates the $\sigma$-algebra $\F^0$, and $\Sigma=\Sigma_0\in\F_0^0\cap\Sigma_0$. This enables us to apply Dynkin's  $\pi-\lambda$ theorem. For example, if two finite measures on $(\Sigma,\F^0)$ agree on each $\F_t^0\cap\Sigma_t$, then they are equal. 

Let $\mu$ and $\nu$ be two  measures on $(\Sigma,\F^0)$, which are $\sigma$-finite on $\F_0^0$ (and so are $\sigma$-finite on each $\F_t^0$). We say that $\nu$ is locally absolutely continuous w.r.t.\ $\mu$, and write $\nu\tl \mu$, if for any $0\le t<\infty$, $\nu|_{ \F_t^0\cap\Sigma_t}$ is absolutely continuous w.r.t.\ $\mu|_{\F_t^0\cap\Sigma_t}$.
This is certainly the case if $\nu\ll\mu$, i.e., $\nu$ is (globally) absolutely continuous w.r.t.\ $\mu$. The process $M_t:=\frac{d\nu|_{\F_t^0\cap\Sigma_t}}{d\mu|_{\F_t^0\cap\Sigma_t}}$, $0\le t<\infty$, is called the local Radon-Nikodym derivative of $\nu$ w.r.t.\ $\mu$. By Dynkin's  $\pi-\lambda$ theorem, we see that $\nu$ is determined by $\mu$ and $(M_t)$. Thus, we say that $\nu$ can be obtained by locally weighting $\mu$ by $(M_t)$. 

\begin{Proposition}
Let $\mu$ be a measure on $(\Sigma,\F^0)$, which is $\sigma$-finite on $\F_0^0$.   Let $(\Upsilon,{\cal G})$ be a  measurable space. Let $\nu:\Up\times \F^0\to[0,\infty]$ be such that for every $\up\in\Up$, $\nu(\up,\cdot)$ is a finite measure on $\F^0$ that is locally absolutely continuous w.r.t.\ $\mu$. Moreover, suppose that the local Radon-Nikodym derivatives are  equal to $(M_t(\up,\cdot))$, where $M_t:(\Up,{\cal G})\times (\Sigma,\F_t)\to[0,\infty)$ is measurable for every $t\ge 0$. Then $\nu$ is a kernel from $(\Upsilon,{\cal G})$ to $(\Sigma,\F^0)$. Moreover, if $\xi$ is a $\sigma$-finite measure on $(\Upsilon,{\cal G})$ such that $\mu$-a.s., $\int_\Up M_t(\up,\cdot)d\xi(\up)<\infty$ for all $t\ge 0$, then $\xi \cdot \nu\tl\mu$, and the local Radon-Nikodym derivatives are $\int_\Up M_t(\up,\cdot)d\xi(\up)$, $0\le t<\infty$. \label{Prop-fubini}
\end{Proposition}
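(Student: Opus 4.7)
The plan is to decouple the two assertions. The first---that $\nu$ is a kernel from $(\Up,\cal G)$ to $(\Sigma,\F^0)$---reduces to showing that $\up\mapsto\nu(\up,F)$ is $\cal G$-measurable for every $F\in\F^0$, since the measure axioms in $F$ hold by hypothesis. The strategy is to verify this first on the $\pi$-system $\bigcup_{t\ge 0}\F_t^0\cap\Sigma_t$ using Tonelli, and then extend by Dynkin's $\pi$-$\lambda$ theorem, exploiting the finiteness of $\nu(\up,\cdot)$. The second assertion is a direct Fubini-Tonelli computation on the same $\pi$-system.

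For the kernel property, fix $t\ge 0$ and $F\in\F_t^0\cap\Sigma_t$. By the local Radon-Nikodym hypothesis,
$$\nu(\up,F)=\int_F M_t(\up,f)\,d\mu(f).$$
Joint $\cal G\otimes\F_t^0$-measurability of $M_t$ and $\sigma$-finiteness of $\mu$ license Tonelli, so the right-hand side is $\cal G$-measurable in $\up$. Let $\Lambda$ be the collection of $F\in\F^0$ for which $\nu(\cdot,F)$ is $\cal G$-measurable. Since each $\nu(\up,\cdot)$ is finite, $\Lambda$ contains $\Sigma$ and is closed under proper differences and countable increasing unions, so it is a $\lambda$-system. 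As $\Lambda$ contains the $\pi$-system $\bigcup_{t\ge 0}\F_t^0\cap\Sigma_t$, which by the excerpt generates $\F^0$, Dynkin's $\pi$-$\lambda$ theorem gives $\Lambda=\F^0$.

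For the absolute continuity statement, fix $t\ge 0$ and $F\in\F_t^0\cap\Sigma_t$. Two applications of Tonelli (non-negative integrand, $\sigma$-finite measures $\xi$ and $\mu$) yield
$$\xi\cdot\nu(F)=\int_\Up\int_F M_t(\up,f)\,d\mu(f)\,d\xi(\up)=\int_F\Big(\int_\Up M_t(\up,f)\,d\xi(\up)\Big)\,d\mu(f).$$
By hypothesis the inner integral is $\mu$-a.s.\ finite, so it serves as the density of $\xi\cdot\nu|_{\F_t^0\cap\Sigma_t}$ with respect to $\mu|_{\F_t^0\cap\Sigma_t}$. Before writing $\xi\cdot\nu\tl\mu$ I still need $\xi\cdot\nu$ to be $\sigma$-finite on $\F_0^0$: picking a $\mu$-exhaustion $B_n\uparrow\Sigma$ in $\F_0^0$ with $\mu(B_n)<\infty$ and intersecting with the level sets $\{f:\int_\Up M_0(\up,f)\,d\xi(\up)\le k\}$ (which lie in $\F_0^0$ by Tonelli again) produces a countable cover of a set of full $\mu$-measure on each piece of which $\xi\cdot\nu$ is finite; the complement is $\mu$-null, hence $\xi\cdot\nu$-null by the density formula at $t=0$. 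This completes both conclusions.

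The only real technical point is the bookkeeping between the global domain $\Sigma$ on which $M_t$ is defined and the local restrictions $\F_t^0\cap\Sigma_t$ where $M_t$ plays the role of a density; once the joint measurability of $M_t$ is granted, Tonelli applies at both steps and the identifications are forced. The $\pi$-$\lambda$ passage is the only place where finiteness of $\nu(\up,\cdot)$ is needed, and the $\sigma$-finiteness check for $\xi\cdot\nu$ on $\F_0^0$ is the only spot where one must combine the hypotheses on $\mu$, $\xi$ and $M_0$ before the notation $\tl$ becomes meaningful.
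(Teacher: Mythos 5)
Your proof follows the same two-step route as the paper's: Tonelli plus Dynkin's $\pi$-$\lambda$ theorem (applied to the $\pi$-system $\bigcup_t\F_t^0\cap\Sigma_t$, using finiteness of $\nu(\up,\cdot)$ for the $\lambda$-system closure) establishes the kernel property, and a second Tonelli interchange identifies the local density of $\xi\cdot\nu$ with respect to $\mu$. The only difference is your explicit verification that $\xi\cdot\nu$ is $\sigma$-finite on $\F_0^0$, which is indeed required before the symbol $\tl$ applies and which the paper's proof leaves implicit; this is a sensible addition but not a conceptual departure.
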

\begin{proof} By Dynkin's  $\pi-\lambda$ theorem, to prove that $\upsilon\mapsto \nu(\up,\cdot)$ is measurable, it suffices to show that, for any $t\in[0,\infty)$ and any $A\in\F_t^0\cap\Sigma_t$, $\up\mapsto  \nu(\up,A)$ is measurable, which easily follows from Tonelli's theorem because $\nu(\up, A)=\int_{A} M_t(\up,f)d\mu(f)$. To prove that $\xi\cdot\nu\tl\mu$ and find the local Radon-Nikodym derivatives, we apply Tonelli's theorem again and get
$$\xi\cdot\nu(A)=\int_\Up \nu(\up, A) d\xi(\up)=\int_\Up \int_{A} M_t(\up,f)d\mu(f)d\xi(\up)
= \int_{A}\int_\Up M_t(\up,f)d\xi(\up)d\mu(f)$$ for $A\in\F_t^0\cap\Sigma_t$.
Then we conclude that $\xi\cdot\nu\tl\mu$, and conclude that the local Radon-Nikodym derivatives are $\int_\Up M_t(\up,\cdot)d\xi(\up)$, $0\le t<\infty$. \end{proof}


\begin{Proposition}
 Let $\mu$ be a probability measure on $(\Sigma,\F^0)$. Let $\xi$ be a $\mu$-kernel from $(\Sigma,\F^0)$ to $(0,\infty)$ that satisfies $\EE_\mu[ |\xi|]<\infty$. Then $\K_{\xi}(\mu)\tl \mu$, and the local Radon-Nikodym derivatives are $\EE_\mu[\xi((t,\infty))| \F^0_t]$, $0\le t<\infty$.
\label{Prop-trunc}
\end{Proposition}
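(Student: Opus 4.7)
The plan is to unfold the definition of $\K_\xi(\mu)$ on each generating collection $\F_t^0\cap\Sigma_t$ and recognize the result as the $\mu$-integral of a conditional expectation. First I would observe that both $\mu$ and $\K_\xi(\mu)$ are finite, hence $\sigma$-finite on $\F_0^0$: indeed, taking $t=0$ in the computation below and using $\Sigma=\Sigma_0$, one gets $\K_\xi(\mu)(\Sigma)=\EE_\mu[\xi((0,\infty))]=\EE_\mu[|\xi|]<\infty$, so the hypothesis on $\xi$ supplies finiteness for free.

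The core of the argument is a change-of-variables calculation. Fix $t\ge 0$ and $A\in\F_t^0\cap\Sigma_t$. By definition of the pushforward,
\BGE
\K_\xi(\mu)(A)=\int_\Sigma\int_{(0,\infty)}\mathbf{1}_A(\K_r(f))\,d\xi(f,r)\,d\mu(f).
\EDE
The point is to identify when $\K_r(f)\in A$. Since $A\subset\Sigma_t$, one needs $T_{\K_r(f)}=r\wedge T_f>t$, forcing both $r>t$ and $T_f>t$. Given these, $\K_r(f)|_{[0,t]}=f|_{[0,t]}$, and $A$ lies in $\F_t^0$, so it is determined by values up to time $t$; therefore $\K_r(f)\in A$ iff $f\in A$ (which already entails $T_f>t$). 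Hence $\mathbf{1}_A(\K_r(f))=\mathbf{1}_A(f)\mathbf{1}_{\{r>t\}}$, and Tonelli's theorem gives
\BGE
\K_\xi(\mu)(A)=\int_A \xi(f,(t,\infty))\,d\mu(f).
\EDE

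Since $A\in\F_t^0$, this integral equals $\int_A \EE_\mu[\xi((t,\infty))\mid\F_t^0]\,d\mu$ by the defining property of conditional expectation, and $\EE_\mu[\xi((t,\infty))\mid\F_t^0]$ is finite $\mu$-a.s.\ because its $\mu$-integral is at most $\EE_\mu[|\xi|]<\infty$. As $A$ was arbitrary in the $\pi$-system $\F_t^0\cap\Sigma_t$, this identifies the local Radon-Nikodym derivative on $\F_t^0\cap\Sigma_t$ as $\EE_\mu[\xi((t,\infty))\mid\F_t^0]$, and in particular shows $\K_\xi(\mu)\tl\mu$.

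The main (modest) obstacle is just the careful bookkeeping in the step identifying $\{\K_r(f)\in A\}$ with $\{r>t\}\cap\{f\in A\}$: one must keep track of the two constraints $r>t$ and $T_f>t$ coming from $A\subset\Sigma_t$, and use that $A\in\F_t^0$ depends only on the restriction to $[0,t]$, which $\K_r$ preserves when $r>t$. Everything else is Tonelli plus the definition of conditional expectation, and $\sigma$-finiteness of $\mu\otimes\xi$ (needed to legitimize Tonelli) follows from $\EE_\mu[|\xi|]<\infty$.
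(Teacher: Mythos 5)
Your proof is correct and takes essentially the same route as the paper: unfold the pushforward $\K_\xi(\mu)$ on a set $A\in\F_t^0\cap\Sigma_t$, identify $\{\K_r(f)\in A\}$ with $\{f\in A\}\cap\{r>t\}$, apply Tonelli, and recognize the conditional expectation. The only difference is that you spell out the bookkeeping behind the identification step, which the paper dispatches with ``it is easy to see.''
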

\begin{proof}
  Fix $t\in[0,\infty)$, and $E \in\F^0_{t}\cap\Sigma_t$. It is easy to see that $\K_r(f)\in E$  iff $f\in E$ and $r>t$,
  which implies that
  $$\K_{\xi}(\mu)(E) =\mu\otimes \xi(E\times (t,\infty))=\int_E \xi(f,(t,\infty))d\mu(f)= \int_E \EE_\mu[\xi((t,\infty))|\F^0_{t}]d\mu.$$ Then we get the conclusion. \end{proof}

\no{\bf Remark.} 
Proposition \ref{Prop-trunc} will be mainly applied to the case that $\xi=d\theta$, where $(\theta_t)$ is an $(\F_t^\mu)$-adapted right-continuous increasing process with $\theta_0=\theta_{0^-}=0$ and $\EE_\mu[\theta_\infty]<\infty$.  Applying the proposition, we find that $\K_{d\theta}(\mu)\tl \mu$, and
\BGE \mu-\mbox{a.s.},\quad \frac{d\K_{d\theta}(\mu)|_{\F^0_t\cap\Sigma_t}}{d\mu|_{\F^0_t\cap\Sigma_t}} =\EE_\mu[\theta_\infty| \F_t^\mu]-\theta_t.\label{trunc'}\EDE

Fix $\kappa>0$. Let $\PP_\kappa$ be the law of $\sqrt\kappa$ times a standard Brownian motion. This means that $\frac 1{\sqrt\kappa}$ times the coordinate process on $\Sigma$ under $\PP_\kappa$ is a standard Brownian motion. For this reason, we use $(B_t)$ to denote the above standard Brownian motion on $\Sigma$, i.e., $\frac1{\sqrt\kappa}$ times the coordinate process. 
We observe that $\PP_\kappa$ is supported by $\Sigma_\infty\cap\Sigma_\oplus$.
Let $\F^B_t$ and $\F^B$ be the $\PP_\kappa$-completion of $\F_t^0$ and $\F^0$, respectively; and
 $\EE_\kappa$ denote the expectation w.r.t.\ $\PP_\kappa$.

We now use Girsanov's theorem to derive local Radon-Nikodym derivatives. Recall that when we used Girsanov's theorem to weight a probability measure by a positive local martingale, we had to stop the process at some stopping time to get a bounded martingale. The following proposition says that we do not need to do the stopping, and the local martingale valued at different times are just the local Radon-Nikodym derivatives.

\begin{Proposition}
  Suppose that $(X_t)_{0\le t<T_0}$ satisfies $X_0=0$ and the $(\F^B_t)$-adapted SDE:
  $$ dX_t=\sqrt\kappa d B_t+ \sigma_t dt ,\quad 0\le t<T_0,$$ 
  where $T_0$ is a positive $(\F^B_t)$-stopping time and $(\sigma_t)_{0\le t<T_0}$ is a real valued $(\F^B_t)$-adapted continuous process.
  Let $\PP^{\kappa,\sigma}$ denote the law of $X$. Then $\PP^{\kappa,\sigma}\tl\PP_\kappa$. Moreover, if $M_t$, $0\le t<T_0$, is an $(\F^B_t)$-adapted continuous  local martingale that satisfies $M_0=1$ and the SDE:
  \BGE dM_t=M_t\frac{\sigma_t}{\sqrt\kappa} dB_t,\quad 0\le t<T_0.\label{dMt}\EDE
  then
  \BGE  \frac{d\PP^{\kappa,\sigma}|_{\F^0_t\cap \Sigma_t}}{d\PP_\kappa|_{\F^0_t\cap\Sigma_t}} ={\bf 1}_{T_0>t } M_t,\quad 0\le t<\infty.\label{dX/dY}\EDE \label{Prop-Girs}
\end{Proposition}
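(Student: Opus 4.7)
The plan is to reduce everything to the classical Girsanov theorem via a localization of $M$ and then pass to the limit. Fix $t \ge 0$ and $E \in \F^0_t \cap \Sigma_t$. Our target is the identity
\begin{equation*}
\PP^{\kappa,\sigma}(E) \;=\; \int_E \mathbf{1}_{T_0 > t} \, M_t \, d\PP_\kappa,
\end{equation*}
from which both the local absolute continuity $\PP^{\kappa,\sigma} \tl \PP_\kappa$ and the explicit form of the local Radon--Nikodym derivative follow by the $\pi$--$\lambda$ remark already made in this section.

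First I would introduce a sequence of $(\F^B_t)$-stopping times $\tau_n \nearrow T_0$ along which the stopped process $M^{\tau_n}$ is bounded above and bounded below away from $0$, and hence is a uniformly integrable martingale with $\EE_\kappa[M_{\tau_n}] = 1$. A convenient choice is
\begin{equation*}
\tau_n \;=\; n \wedge \inf\Big\{s < T_0 \,:\, M_s + M_s^{-1} \ge n \text{ or } \int_0^s \sigma_u^2 \, du \ge n\Big\};
\end{equation*}
positivity of $M$ is forced by $M_0 = 1$ and the multiplicative SDE~(\ref{dMt}), and, combined with the continuity of $M$ and $\sigma$ on $[0, T_0)$, ensures $\tau_n \nearrow T_0$. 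For each $n$, define a probability measure $\widetilde\PP_n$ on $(\Sigma, \F^B_t)$ by $d\widetilde\PP_n / d\PP_\kappa = M_{t \wedge \tau_n}$. Classical Girsanov applied to the bounded exponential martingale $M^{\tau_n}$ then tells us that $\widetilde B_s := B_s - \int_0^{s \wedge \tau_n} \frac{\sigma_u}{\sqrt\kappa} \, du$ is a standard Brownian motion under $\widetilde\PP_n$ on $[0, t]$. Consequently, on the event $\{\tau_n > t\}$ the coordinate process $Y = \sqrt\kappa \, B$ satisfies
\begin{equation*}
Y_s \;=\; \sqrt\kappa \, \widetilde B_s + \int_0^s \sigma_u \, du, \qquad s \in [0, t],
\end{equation*}
which is precisely the relation defining $X$ from its driving Brownian motion under $\PP_\kappa$.

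Uniqueness for this SDE --- immediate here since the drift is only an adapted functional added to a Brownian increment --- yields that, restricted to $\{\tau_n > t\}$, the law of $Y|_{[0,t]}$ under $\widetilde\PP_n$ coincides with the law of $X|_{[0,t]}$ under $\PP_\kappa$. This gives, for $E \in \F^0_t \cap \Sigma_t$,
\begin{equation*}
\PP_\kappa(X \in E, \, \tau_n > t) \;=\; \widetilde\PP_n(E \cap \{\tau_n > t\}) \;=\; \EE_\kappa[\mathbf{1}_E \, \mathbf{1}_{\tau_n > t} \, M_{t \wedge \tau_n}] \;=\; \EE_\kappa[\mathbf{1}_E \, \mathbf{1}_{\tau_n > t} \, M_t],
\end{equation*}
the last equality using $M_{t \wedge \tau_n} = M_t$ on $\{\tau_n > t\}$. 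Sending $n \to \infty$, $\{\tau_n > t\} \nearrow \{T_0 > t\}$ and monotone convergence on both sides yield the desired identity. The main difficulty will be the identification of laws in the preceding display: one must check that the $\sigma$-functional, the martingale $M$, and the stopping time $\tau_n$ transform consistently between the $B$-picture under $\PP_\kappa$ and the $\widetilde B$-picture under $\widetilde\PP_n$, which is straightforward here but requires care in stating. A secondary technical point is establishing $\tau_n \nearrow T_0$ with $\EE_\kappa[M_{\tau_n}] = 1$, which relies on positivity and continuity of $M$ on $[0, T_0)$ and on $M^{\tau_n}$ being a bounded martingale.
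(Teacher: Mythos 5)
Your proposal follows essentially the same route as the paper's own proof: localize the exponential local martingale $M$ along stopping times at which it is bounded, apply the classical Girsanov theorem to each localized piece to get an equivalent probability measure under which a shifted process is Brownian, identify the law of the coordinate process under the tilted measure with the law of $X$ under $\PP_\kappa$ on the event $\{\tau_n>t\}$, and pass to the limit by monotone convergence. The only real differences are cosmetic: the paper stops at $\tau_N=\inf\{t<T_0:M_t\ge N\}$ (with $\inf\emptyset=T_0$), which is enough since $M$ is a positive continuous exponential and boundedness from above already forces everything you need, so your extra monitoring of $M^{-1}$, $\int\sigma^2$, and the cap $n\wedge$ is redundant though harmless. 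One place where the paper is more careful than you: when $\tau_N=T_0<\infty$, the quantity $M_{\tau_n}$ in your assertion $\EE_\kappa[M_{\tau_n}]=1$ is not a priori defined (since $M$ lives only on $[0,T_0)$); the paper handles this by observing that on that event the bounded local martingale has an a.s.\ limit as $t\to T_0^-$ and explicitly extends $M^{\tau_N}$ past $T_0$ by that limit, so that $M^{\tau_N}$ becomes a genuinely bounded martingale on $[0,\infty)$ with $\EE_\kappa[M^{\tau_N}_\infty]=1$. You should state this extension explicitly, but with that small patch your argument is the paper's argument.
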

\begin{proof}
Let $M_t=\exp(\int_0^t \frac{\sigma_s}{\sqrt\kappa}dB_s-\int_0^t\frac{\sigma_s^2}{2\kappa}ds)$, $0\le t<T_0$. From It\^o's formula (cf.\ \cite{RY}), we see that $(M_t)$ satisfies $M_0=1$ and (\ref{dMt}). Thus, it suffices to prove (\ref{dX/dY}).

Fix $N\in\N$ and let $\tau_N=\inf\{0\le t<T_0: |M_t|\ge N\}$. Here we set $\inf\emptyset=T_0$. Then $\tau_N$ is a stopping time with $\tau_N\le T_0$, and $M_t$, $0\le t<\tau_N$, is uniformly bounded by $N$. If $\tau_N=T_0$, then $\PP_\kappa$-a.s.\ $\lim_{t\to T_0^-} M_t$ exists.
  Let
$$M^{\tau_N}_t:=\left\{\begin{array}{ll} M_{t\wedge \tau_N},&t\wedge\tau_N<T_0;\\  \liminf_{s\to T_0^-} M_s, & t\ge\tau_N=T_0.
\end{array}
\right.$$
Then $(M^{\tau_N}_t,0\le t<\infty)$ is a uniformly bounded ($\PP_\kappa$-a.s.) continuous local martingale, and satisfies the SDE:
\BGE dM^{\tau_N}_t=M^{\tau_N}_t{\bf 1}_{t<\tau_N}\frac{\sigma_t}{\sqrt\kappa} dB_t,\quad 0\le t<\infty.\label{dM-tau}\EDE
Let $M^{\tau_N}_\infty=\liminf_{t\to\infty} M^{\tau_N}_t$. Then for any $0\le t<\infty$, a.s.\ $\EE_\kappa[M^{\tau_N}_\infty|\F^B_t]=M^{\tau_N}_t$.
In particular, since $M^{\tau_N}_0=1$, we have $\EE_\kappa[M^{\tau_N}_\infty]=1$. Define $\PP^{\kappa,\sigma}_N$ such that $\frac{d \PP^{\kappa,\sigma}_N}{d\PP_\kappa}=M^{\tau_N}_\infty$. Then $\PP^N_\kappa$ is also a probability measure on $(\Sigma,\F^0)$. Let
$$B^N_t=B_t-\int_0^{\tau_N\wedge t} \frac{\sigma_s}{\sqrt\kappa} ds,\quad 0\le t<\infty.$$
From Girsanov's theorem (cf.\ \cite{RY}) and (\ref{dM-tau}), we know that the law of $B^N$ under $\PP^{\kappa,\sigma}_N$ is also that of a standard Brownian motion.
From
$$\sqrt\kappa B_t=\sqrt\kappa  B^N_t+\int_0^{ t}  {\sigma_s}  ds,\quad 0\le t<\tau_N.$$
we see that the law of $(\sqrt\kappa B_t,0\le t<\tau_N)$ under $\PP^{\kappa,\sigma}_N$ is the same as the law of $(X_t,0\le t<\tau_N)$ under $\PP_\kappa$.

Fix $t\in[0,\infty)$ and  $E\in\F_{t}^0\cap\Sigma_t$. Since $T_0$ is the lifetime of $X$, and $T_0=\sup_{N\in\N}\tau_N$, we have
\BGE X^{-1}(E)\subset X^{-1}(\Sigma_{t})\subset \{T_0>t\}=\bigcup_{N\in\N}\{\tau_N>t\}.\label{YE}\EDE
Since the law of $(\sqrt\kappa B_t,0\le t<\tau_N)$ under $\PP^{\kappa,\sigma}_N$ is the same as the law of $(X_t,0\le t<\tau_N)$ under $\PP_\kappa$, and $(\sqrt\kappa B_t)$ is the coordinate process, we get
$$\PP_\kappa[X^{-1}(E)\cap \{\tau_N>t\}]=\PP^{\kappa,\sigma}_N[E\cap \{\tau_N>t\}]=\EE_\kappa[{\bf 1}_{E\cap \{\tau_N>t\}}M^{\tau_N}_\infty]$$ $$=\EE_\kappa[{\bf 1}_{E\cap \{\tau_N>t\}}M^{\tau_N}_{t}]=\EE_\kappa[{\bf 1}_{E}{\bf 1}_ {\{\tau_N>t\}}M_{t}],$$
where the third equality follows from the optional stopping theorem, and the last equality holds because $M^{\tau_N}_{t}=M_{t}$ on $\{\tau_N>t\}$. This together with (\ref{YE}) implies that
$$\PP^{\kappa,\sigma}(E)=\PP_\kappa [X^{-1}(E)]=\lim_{N\to\infty}\PP_\kappa[X^{-1}(E)\cap \{\tau_N>t\}]= \EE_\kappa[{\bf 1}_{E}{\bf 1}_ {\{T_0>t\}}M_{t}].$$
So we get (\ref{dX/dY}) and finish the proof. \end{proof}

At the end of this section, we state and prove the following proposition, which extends the strong Markov property of Brownian motions.

\begin{Proposition} Let $(\theta_t)_{0\le t<\infty} $ be a right-continuous increasing $(\F^B_t)$-adapted process that satisfies $\theta_0=\theta_{0^+}=0$ and $\EE_\kappa[\theta_\infty]<\infty$. Then
  \BGE {\K_{d\theta}(\PP_\kappa)\ha\oplus \PP_\kappa}=\PP_\kappa\otimes d\theta.\label{Xoplus}\EDE \label{Prop-Pkdecmp}
Thus, $\K_{d\theta}(\PP_\kappa)\oplus \PP_\kappa\ll\PP_\kappa$, and $\theta_\infty$ is the Radon-Nikodym derivative.
\end{Proposition}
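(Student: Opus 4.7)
My plan is to apply Dynkin's $\pi$-$\lambda$ theorem: both measures in (\ref{Xoplus}) are finite with total mass $\EE_\kappa[\theta_\infty]<\infty$, so it suffices to check equality on the $\pi$-system
$$\mathcal{S}=\{E\times[0,a]:a\ge 0,\ E\in\F_t^0\cap\Sigma_t\text{ for some }t\}$$
on $\Sigma\times[0,\infty)$, which generates the product $\sigma$-algebra and admits the exhausting sequence $\Sigma\times[0,n]$. The right-hand side is immediate: since $\theta_0=0$,
$$(\PP_\kappa\otimes d\theta)(E\times[0,a])=\EE_\kappa[{\bf 1}_E\theta_a].$$

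For the left-hand side, I unfold the definitions of $\K$ and $\ha\oplus$, using $\PP_\kappa(T_X=\infty)=1$ to identify $T_{\K_r(X)}$ with $r$:
$$\mathrm{LHS}=\int{\bf 1}_{\{\K_r(X)\oplus Y\in E\}}{\bf 1}_{\{r\le a\}}\,d\PP_\kappa(X)\,d\theta_r(X)\,d\PP_\kappa(Y),$$
with $Y\sim\PP_\kappa$ independent of $X$. The crucial step is a conditional Markov identity at \emph{fixed} times: for each deterministic $r\ge 0$, both $X$ and $\K_r(X)\oplus Y$ extend $X|_{[0,r]}$ by an independent $\sqrt\kappa$-BM started at $X(r)$, so integrating out $Y$ gives $\int{\bf 1}_{\{\K_r(X)\oplus Y\in E\}}\,d\PP_\kappa(Y)=M_r:=\EE_\kappa[{\bf 1}_E|\F^B_r]$, $\PP_\kappa$-a.s. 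Since the Brownian filtration admits only continuous martingales, $M$ has a jointly measurable continuous version; applying Fubini to the for-each-$r$ a.s.\ identity yields $\mathrm{LHS}=\EE_\kappa\int_0^a M_r\,d\theta_r$.

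The final step, $\EE_\kappa\int_0^a M_r\,d\theta_r=\EE_\kappa[{\bf 1}_E\theta_a]$, follows by approximating the integral by Riemann sums $\sum_i M_{r_{i+1}}(\theta_{r_{i+1}}-\theta_{r_i})$: since $\theta_{r_{i+1}}-\theta_{r_i}$ is $\F^B_{r_{i+1}}$-measurable, the tower property yields $\EE_\kappa[M_{r_{i+1}}(\theta_{r_{i+1}}-\theta_{r_i})]=\EE_\kappa[{\bf 1}_E(\theta_{r_{i+1}}-\theta_{r_i})]$, and dominated convergence ($|M|\le 1$, $\theta_a\in L^1$) passes the limit through on both sides. (Equivalently, integration by parts gives $M_a\theta_a=\int_0^a M_r\,d\theta_r+\int_0^a\theta_{r-}\,dM_r$, and a truncation $\theta^{(n)}=\theta\wedge n$ turns the stochastic integral into a true martingale of zero expectation.) This matches the RHS and establishes (\ref{Xoplus}). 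The assertion $\K_{d\theta}(\PP_\kappa)\oplus\PP_\kappa\ll\PP_\kappa$ with density $\theta_\infty$ then follows by taking the $\Sigma$-marginal of (\ref{Xoplus}), since $\int_0^\infty d\theta_r=\theta_\infty$.

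The main obstacle is that $r$ is not an $(\F^B_t)$-stopping time but a random time whose distribution depends on $X$ through the random measure $d\theta_X$; consequently, the Markov identity $\int{\bf 1}_{\{\K_r(X)\oplus Y\in E\}}\,d\PP_\kappa(Y)=M_r$ is only a pointwise-in-$r$, $\PP_\kappa$-a.s.\ statement. The remedy is to commit to a jointly measurable continuous version of $(M_r)$ before invoking Fubini; once that is done, the identification with $\EE_\kappa[{\bf 1}_E\theta_a]$ is precisely the optional projection of ${\bf 1}_E$ against the adapted measure $d\theta$.
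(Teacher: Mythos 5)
Your approach is genuinely different from the paper's. The paper first proves (\ref{Xoplus}) for the simple step process $\theta_t={\bf 1}_E{\bf 1}_{[t_0,\infty)}(t)$ (using the Markov property at a \emph{deterministic} time $t_0$ and Dynkin's lemma on cylinder sets $A\oplus_{t_0}B$), extends by countable linearity, and then passes to the general $\theta$ by a weak-convergence argument with the dyadic approximants $\theta^{(n)}_t=\theta_{U^{(n)}(t)}$. Your route instead unfolds the definitions directly, obtaining $\mathrm{LHS}=\EE_\kappa\int_0^a\Phi(r,\cdot)\,d\theta_r$ with $\Phi(r,X):=\int{\bf 1}_{\{\K_r(X)\oplus Y\in E\}}\,d\PP_\kappa(Y)$, and then tries to replace $\Phi$ by the nice version $M_r=\EE_\kappa[{\bf 1}_E\,|\,\F_r^B]$. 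This is an attractive shortcut, but as written it has a real gap.

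The gap is exactly at the step ``applying Fubini to the for-each-$r$ a.s.\ identity yields $\mathrm{LHS}=\EE_\kappa\int_0^a M_r\,d\theta_r$.'' The identity $\Phi(r,\cdot)=M_r$ holds, for each \emph{fixed} $r$, on a $\PP_\kappa$-full set $\Omega_r$, but $d\theta_X$ is a \emph{random} measure: it is a kernel, not a fixed $\sigma$-finite measure on the time axis, and it can be singular with respect to Lebesgue measure (for instance it may concentrate on the graph of a stopping time, or be Cantor-like). Consequently the exceptional set $N=\{(r,X):\Phi(r,X)\ne M_r(X)\}$ has $\mathrm{Leb}\otimes\PP_\kappa$-measure zero, but need not have $\PP_\kappa\otimes d\theta$-measure zero. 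A concrete caricature: if $Z'_t=Z_t+{\bf 1}_{\{t=\tau\}}$ for a stopping time $\tau$ with continuous law and $\theta_t={\bf 1}_{\{\tau\le t\}}$, then $Z'_r=Z_r$ a.s.\ for each $r$, yet $\EE\int Z'\,d\theta\ne\EE\int Z\,d\theta$. Your stated remedy --- choosing a continuous version of $M$ --- does not address this: the problem sits on the $\Phi$ side, and $\Phi$ is pinned down by the Fubini integral; you have no freedom to modify it.

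The gap is, however, fixable, and in a way that is consonant with the optional-projection intuition you gesture at in the last paragraph --- but you must upgrade the weak Markov property to the \emph{strong} Markov property. For any finite $(\F^B_t)$-stopping time $\tau$, the pair $(\K_\tau(X),Y)$ with $Y\sim\PP_\kappa$ independent of $X$ has the same joint law as $(\K_\tau(X),(X_{\tau+\cdot}-X_\tau))$, so $(\K_\tau(X),\K_\tau(X)\oplus Y)\stackrel{d}{=}(\K_\tau(X),X)$; integrating over $A\in\F^B_\tau$ shows $\Phi_\tau=M_\tau$ $\PP_\kappa$-a.s. Thus the optional projection of $\Phi-M$ vanishes, and the optional projection theorem (applied with the optional increasing process $\theta$) yields $\EE_\kappa\int_0^a\Phi\,d\theta=\EE_\kappa\int_0^a M\,d\theta$. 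With that supplied, your Riemann-sum/integration-by-parts computation of $\EE_\kappa\int_0^a M_r\,d\theta_r=\EE_\kappa[{\bf 1}_E\theta_a]$ is correct and completes the proof. The trade-off relative to the paper's proof is that you invoke the general optional projection machinery, whereas the paper's step-function-then-weak-limit argument stays entirely within elementary Markov-at-deterministic-times plus a weak convergence check; both are legitimate, but you should state the strong Markov step explicitly rather than leaning on a version choice for $M$.
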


\no{\bf Remark.} If $\theta_t={\bf 1}_{\tau\le t}$, where $\tau$ is a positive finite $( \F^B_t)$-stopping time, then the proposition reduces to the  strong Markov property of $(\sqrt\kappa B_t)$, i.e., $\K_{\delta_\tau}(\PP_\kappa)\oplus \PP_\kappa=\PP_\kappa$.
\begin{proof}
  First, assume that there is $t_0\in(0,\infty)$ and $E\in\F^B_{t_0}$ such that $\theta_t(f)={\bf 1}_E(f)\cdot {\bf 1}_{[t_0,\infty)}(t)$. Fix $t_0'\in(0,t_0)$, $A\in\F^B_{t_0'}$ and $B\in\F^B$. For every $r\in[0,\infty)$, define ${\cal S}_r:\Sigma_r\to\Sigma_\oplus$ such that if $g={\cal S}_r(f)$, then $T_g=T_f-r$, and $g(t)=f(r+t)-f(r)$, $0\le t<T_g$.  Let $A\oplus_{t_0} B=\{f\in A\cap\Sigma_{t_0}: {\cal S}_{t_0}(f)\in B\}$. Since ${\cal S}_{t_0}({\cal K}_{t_0}(f)\oplus g)=g$, and ${\cal K}_{t_0}(f)\in A$ iff $f\in A$ and $r>t_0'$, we get
  $$\K_{d\theta}(\PP_\kappa)\oplus \PP_\kappa(A\oplus_{t_0} B)=\K_{d\theta}(\PP_\kappa)(A) \PP_\kappa(B)=\PP_\kappa\otimes d\theta(A\times (t_0',\infty))\PP_\kappa(B)=\PP_\kappa(A\cap E) \PP_\kappa(B).$$
   From the Markov property of $(\sqrt\kappa B_t)$, we get
   $$\int_{A\oplus_{t_0} B}\theta_\infty d\PP_\kappa=\int_{A\oplus_{t_0} B} {\bf 1}_Ed\PP_\kappa=\PP_\kappa({(A\cap E)\oplus_{t_0} B})=\PP_\kappa(A\cap E)\cdot \PP_\kappa(B).$$
   Define $\PP_\kappa^\theta$ such that $d\PP_\kappa^{\theta}/d\PP_\kappa=\theta_\infty$. From the above two displayed formulas, we see that $\K_{d\theta}(\PP_\kappa)\oplus \PP_\kappa$ agrees with $\PP_\kappa^{\theta}$ on the sets $A\oplus_{t_0} B$, where $A\in\F^B_{t_0'}$, $t_0'\in(0,t_0)$, and $B\in\F^B$. Since these sets form a $\pi$-system, Dynkin's  $\pi-\lambda$ theorem implies that the two measures agree on the $\sigma$-algebra generated by these sets, which agrees with  $\F^B$ restricted to $\Sigma_{t_0}$. Since both measures are supported by $\Sigma_\infty\subset\Sigma_{t_0}$,  we get $\K_{d\theta}(\PP_\kappa)\oplus \PP_\kappa=\PP_\kappa^{\theta}$. Since these two measures are the projections of $\K_{d\theta}(\PP_\kappa)\ha\oplus \PP_\kappa$ and $\PP_\kappa\otimes d\theta$, respectively, to $\Sigma$, and the projections of them to $(0,\infty)$ are both concentrated at $t_0$, we get (\ref{Xoplus}) in this special case.

   Second, assume that $(\theta_t)$ has the form of $\sum_{n=1}^\infty C_n\theta^{(n)}_t$, where each $C_n$ is a nonnegative real number and each $\theta^{(n)}_t$ satisfies the condition in the previous paragraph. In this case, we get (\ref{Xoplus}) using the result in the above paragraph and the fact that both sides of (\ref{Xoplus}) satisfy the countable linearity in $(\theta_t)$.

   Finally, we consider the general case. Since $\EE_\kappa[\theta_\infty]<\infty$, from the linearity of both sides of (\ref{Xoplus}) in $d\theta$, we may assume that  $\EE_\kappa[\theta_\infty]=1$. In this case both sides of (\ref{Xoplus}) are probability measures.
   For $n\in\N$, define $U^{(n)}$ and $L^{(n)}$ from $[0,\infty)$ to $[0,\infty)$ such that
   $$U^{(n)}(t)=\frac{\lfloor 2^n\cdot t\rfloor+1}{2^n},\quad L^{(n)}(t)=0\vee \frac{\lceil 2^n\cdot t\rceil -1}{2^n}.$$ Then $U^{(n)}(t)\downarrow t$ and $L^{(n)}(t)\uparrow t$ for any $t\in[0,\infty)$. Moreover, we have
   \BGE  L^{(n)}(t)\le s\quad \mbox{if and only if} \quad t\le U^{(n)}(s),\quad \forall t,s\in[0,\infty).\label{equiv}\EDE
   This equivalence holds because for any $t,s\ge 0$, both sides are equivalent to that there is an integer $n$ such that $2^nt\le n\le 2^ns+1$. Define $(\theta^{(n)}_t)$ such that $\theta^{(n)}_t=\theta_{U^{(n)}(t)}$. Then $(\theta^{(n)}_t)$ has the form of that in the above paragraph since it takes values only at $k/2^n$, $k\in\Z$. Thus, $\K_{d\theta^{(n)}}(\PP_\kappa)\ha\oplus \PP_\kappa=\PP_\kappa\otimes d\theta^{(n)}$ for each $n\in\N$. From (\ref{equiv}), we get
   \BGE L^{(n)}_*(d\theta(f,\cdot))=d\theta^{(n)}(f,\cdot),\quad \forall f\in\Sigma, n\in\N.\label{push}\EDE

   We assign $\Sigma$ the topology of locally uniform convergence. It suffices to show that $\K_{d\theta^{(n)}}(\PP_\kappa)\ha\oplus \PP_\kappa$ and $\PP_\kappa\otimes d\theta^{(n)}$ converge weakly to $\K_{d\theta}(\PP_\kappa)\ha\oplus \PP_\kappa$ and $\PP_\kappa\otimes d\theta$, respectively. To prove that $\K_{d\theta^{(n)}}(\PP_\kappa)\ha\oplus \PP_\kappa\to \K_{d\theta}(\PP_\kappa)\ha\oplus \PP_\kappa$, we define $\Sigma\times(0,\infty)$-valued random variables $h^{(n)}$ and $h$ on a probability space $(\Omega,\PP)$ such that their  laws are the above measures, and a.s.\ $h^{(n)}\to h$. For this purpose, we choose $\Omega=(\Sigma_\infty\times (0,\infty))\times \Sigma_\oplus$, $\PP=(\PP_\kappa\otimes d\theta)\times \PP_\kappa$, $h((f,t),g)=(\K_t(f)\oplus g,t)$, and $h^{(n)}(f,t,g)=h((f,L^{(n)}(t)),g)$. Using (\ref{push}) and the convergence $L^{(n)}(t)\uparrow t$ it is easy to check that $h^{(n)}$ and $h$ satisfy the desired properties. To prove that $\PP_\kappa\otimes d\theta^{(n)}\to\PP_\kappa\otimes d\theta$, we choose $\Omega=\Sigma\times(0,\infty)$,  $\PP=\PP_\kappa\otimes d\theta$, $h=\id_\Omega$, and $h^{(n)}(f,t)=(f,L^{(n)}(t))$. Then a.s.\ $h^{(n)}\to h$, and from (\ref{push}) we see that the laws of $h^{(n)}$ and $h$ are $\PP_\kappa\otimes d\theta^{(n)}$ and $\PP_\kappa\otimes d\theta$, respectively. So we get (\ref{Xoplus}) in the general case.

   The statement after (\ref{Xoplus}) follows from projecting both sides of (\ref{Xoplus}) to $\Sigma$.
\end{proof}

\section{Schramm-Loewner Evolution} \label{section-SLE}
In this section, we review the  Loewner equations and the Schramm-Loewner Evolution (SLE). See \cite{Law1,RS} for more details. We focus on chordal SLE, and will often omit the word ``chordal'' before ``Loewner equation" or ``SLE'' when there is no confusion.

The definition of SLE uses the Loewner equations. Let's first review the (chordal) Loewner equation.
Let $\lambda\in C([0,T))$, where $T\in(0,\infty]$. The Loewner equation driven by $\lambda$ is the following differential equation in the complex plane:
$$\pa g_t(z)=\frac{2}{g_t(z)-\lambda(t)},\quad 0\le t<T;\quad g_0(z)=z.$$
Let $\HH=\{z\in\C:\Imm z>0\}$. For $0\le t<T$, let $K_t$ denote the set of $z\in\HH$ such that the solution $s\mapsto g_s(z)$ blows up before or at $t$. It turns out that $g_t$ maps $\HH\sem K_t$ conformally onto $\HH$, and satisfies $g_t(z)=z+\frac{2t}z+O(|z|^{-2})$ as $z\to\infty$. We call $g_t$ and $K_t$ the Loewner maps and hulls, respectively, driven by $\lambda$.

Suppose for every $t\in[0,T)$, $g_t^{-1}$ extends continuously from $\HH$ to $\lin\HH$. Throughout, we use $f_t$ to denote the continuation of $g_t^{-1}$ from $\lin\HH$ into $\lin\HH$. Also suppose that $\gamma(t):=f_t(\lambda(t))$, $0\le t<T$, is a continuous curve in $\lin\HH$. Then we say that $\gamma$ is the Loewner curve driven by $\lambda$. In this case, for $0\le t<T$, $\HH\sem K_t$ is the unbounded connected component of $\HH\sem\gamma([0,t])$. 
The Loewner curve driven by $\lambda$ may not exist in general.

The Loewner equations satisfy the following scaling and translation properties. Suppose $\lambda(t)$, $0\le t<T$, generates Loewner maps $g_t$ and hulls $K_t$, $0\le t<T$. Let $a>0$ and $b\in\R$, and $\lambda^{a,b}(t)=b+a\cdot\lambda(t/a^2)$, $0\le t<a^2T$. Then $\lambda^{a,b}$ generates the Loewner maps $z\mapsto b+a\cdot g_{t/a^2}(({z-b})/a)$ and hulls $b+a\cdot K_{t/a^2}$, $0\le t<a^2T$. If $\lambda$ generates a Loewner curve $\gamma$, then $\lambda^{a,b}$ also generates a Loewner curve, which is $b+a\cdot \gamma({\cdot/a^2})$.

Another simple and useful property of the Loewner equations is the renewal property. Suppose $\lambda(t)$, $0\le t<T$, generates Loewner maps $g_t$ and hulls $K_t$, $0\le t<T$. Let $\tau\in[0,T)$. Then $\lambda(\tau+t)$, $0\le t<T-\tau$, generates Loewner maps $g_{\tau+t}\circ g_\tau^{-1}$ and hulls $g_\tau(K_{\tau+t}\sem K_\tau)$, $0\le t<T-\tau$.  If $\lambda$ and $\lambda(\tau+\cdot)$ generate Loewner curves $\gamma$ and $\gamma_\tau$, respectively, then $\gamma(\tau+t)=f_\tau(\gamma_\tau(t))$, $0\le t<T-\tau$.

Let $\Sigma^\C$ denote the counterpart of $\Sigma$ with real valued continuous functions replaced by complex valued continuous functions. Let $\Sigma^\Lo$ denote the set of driving functions $\lambda\in\Sigma$ that generate a Loewner curve $\gamma$. Then $\Sigma^\Lo\in\F^0$, and the Loewner map $\Lo:\lambda\mapsto \gamma$ from $\Sigma^\Lo$ to $\Sigma^\C$ is measurable. We also define here the extended Loewner map $\ha\Lo$ from $\{(\lambda,t):\lambda\in\Sigma^\Lo,0\le t<T_\lambda\}$ to $\Sigma^\C\times\lin\HH$ such that $\ha\Lo(\lambda,t)=(\Lo(\lambda),\Lo(\lambda)(t))$.

Fix $\kappa>0$. Let $B(t)$, $0\le t<\infty$, be a standard Brownian motion. The SLE$_\kappa$ process is defined by taking $\lambda(t)=\sqrt\kappa B(t)$, $0\le t<\infty$, in the Loewner equation. In this case, the Loewner curve $\gamma$ driven by $\lambda$ a.s.\ exists, and satisfies $\gamma(0)=0$ and $\lim_{t\to\infty}\gamma(t)=\infty$. Such $\gamma$ is called a standard SLE$_\kappa$ curve (in $\HH$ from $0$ to $\infty$). In terms of measures, this means that $\PP_\kappa$ (the law of $(\sqrt\kappa B_t)$) is supported by $\Sigma^\Lo$. The pushforward measure  $\Lo_*(\PP_\kappa)$ is then the law of a standard SLE$_\kappa$ curve.

The scaling property of Loewner equations and the scaling property of Brownian motions together imply the scaling property of the SLE curve: if $\gamma$ is a standard SLE$_\kappa$ curve, then $t\mapsto a\gamma(t/a^2)$ is also a standard SLE$_\kappa$ curve. The renewal and translation properties of Loewner equations and the strong Markov property of Brownian motions together imply the domain Markov property of SLE: if $\gamma$ is a standard SLE$_\kappa$ curve, and $\tau$ is a finite stopping time, then conditioned on $\gamma(t)$, $t\le \tau$, there is a standard SLE$_\kappa$ curve $\ha\gamma$ such that $\gamma(\tau+t)= f_\tau(\ha\gamma(t)+\lambda_\tau)$, $t\ge 0$.

The definition of SLE$_\kappa$ extends to other simply connected domains by conformal maps. Let $D$ be a simply connected domain with locally connected boundary. Let $a$ and $b$ be two distinct prime ends (\cite{Ahl}) of $D$. Let $f$ be a conformal map from $\HH$ onto $D$ such that $f(0)=a$ and $f(\infty)=b$. Let $\gamma$ be a standard SLE$_\kappa$ curve. Then $f\circ \gamma$ is called an SLE$_\kappa$ curve in $D$ from $a$ to $b$. The local connectedness of $\pa D$ is used to guarantee that $f$ extends continuously to $\lin\HH$ so that $f\circ \gamma$ is a continuous curve in $\lin D$. This condition may be weakened in some cases.
Although the $f$ is not unique, the law of $f\circ \gamma$ is unique up to a linear time-change, thanks to the scaling property of a standard SLE$_\kappa$ curve. From now on, an SLE$_\kappa$ curve without the domain and two prime ends specified is always a standard SLE$_\kappa$ curve, and the word ``standard'' will often be omitted

The behavior of an SLE$_\kappa$ curve depends on the value of $\kappa$. Let $\gamma$ be an SLE$_\kappa$ curve. If $\kappa\in(0,4]$, then $\gamma$ is a simple curve, and does not intersect $\R$ after the time $0$; if $\kappa>4$, then $\gamma$ will intersect itself and $\R$ after the time $0$. If $\kappa\ge 8$, $\gamma$ is space-filling: it visits every point in $\lin\HH$; if $\kappa<8$, then for every $z_0\in\lin\HH\sem\{0\}$, the probability that $\gamma$ visits $z_0$ is $0$. Moreover, the Hausdorff dimension of $\gamma$  is $\min\{1+\frac\kappa 8,2\}$ (cf.\ \cite{dim-SLE}).



SLE$_\kappa(\rho)$ is a variant of SLE$_\kappa$. Its definition involves one or more force points, which lie on the boundary or in the interior of the domain. For the purpose here, we consider the case that there is only one force point, which is an interior point. Let $\rho\in\R$, $a_0\in\R$ and $z_0\in\HH$. An SLE$_\kappa(\rho)$ process started from $a_0\in\R$ with force point at $z_0\in\HH$ is the solution of the Loewner equation driven by $\lambda_t$, $0\le t<T_{z_0}$, which is the solution of the SDE:
$$ d\lambda_t=\sqrt\kappa dB_t+\Ree \frac{\rho}{\lambda_t-g^{\lambda }_t(z_0)}\,dt,\quad \lambda_0=a_0. $$
Here $g^{\lambda }_t$ are the Loewner maps driven by $\lambda $, and $[0,T_{z_0})$ is the maximal solution interval.
Let $\PP^{\kappa,\rho}_{z_0}$ denote the law of $\lambda_t$, $0\le t<T_{z_0}$. From Proposition \ref{Prop-Girs}, we know that $\PP^{\kappa,\rho}_{z_0}\tl\PP_\kappa$. Thus, $\PP^{\kappa,\rho}_{z_0}$ is supported by $\Sigma^\Lo$, i.e., the Loewner curve $\gamma$ driven by $\lambda$ a.s.\ exists, which is called a (standard) SLE$_\kappa(\rho)$ curve started from $a_0$ with force point at $z_0$. The following proposition is a result of \cite{MS4}.


\begin{Proposition}
 For any $\kappa>0$ and $\rho\le \frac\kappa 2-4$,
 \begin{enumerate}
   \item[(i)] a.s.\ $T_{z_0}<\infty$ and $\lim_{t\to T_{z_0}^-} \lambda_t\in\R$;
   \item[(ii)] a.s.\ $\lim_{t\to T_{z_0}^-} \gamma(t)=z_0$.
 \end{enumerate}
 \label{transience-0}
\end{Proposition}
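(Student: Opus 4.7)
The plan is to reduce both statements to an SDE analysis of the $\C$-valued process $Z_t:=g^\lambda_t(z_0)-\lambda_t=X_t+iY_t$. From the driving SDE together with $dg^\lambda_t(z_0)=2/Z_t\,dt$, It\^o's formula yields
\BGEN
dX_t=\frac{(2+\rho)X_t}{|Z_t|^2}\,dt-\sqrt\kappa\,dB_t,\qquad dY_t=-\frac{2Y_t}{|Z_t|^2}\,dt,
\EDEN
so $Y_t$ is strictly decreasing while positive and $T_{z_0}$ is the first time at which $Z_t$ hits $0$, equivalently the first time $Y_t=0$. The crucial step is the random time change $du:=2|Z_t|^{-2}\,dt$, under which $Y_{t(u)}=Y_0 e^{-u}$ and the angle $\theta_u:=\arg Z_{t(u)}\in(0,\pi)$ satisfies an autonomous SDE
\BGEN
d\theta_u=\frac{\kappa-4-\rho}{2}\sin\theta_u\cos\theta_u\,du+\sqrt{\kappa/2}\,\sin\theta_u\,d\til B_u,
\EDEN
for a standard Brownian motion $\til B$ in $u$-time.

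The first half of (i)---namely $Y_t\to 0$---amounts to showing $\theta_u$ never exits $(0,\pi)$, so that $u_t\to\infty$ as $t\to T_{z_0}^-$. A direct Feller-test calculation (the scale-function integrand near $\theta=0$ is $y^{-2(\kappa-4-\rho)/\kappa}$, which is non-integrable at $0$ precisely when $\rho\le \kappa/2-4$) handles this, with the computation at $\theta=\pi$ symmetric. For $T_{z_0}<\infty$, substitute back to obtain $T_{z_0}=\frac{Y_0^2}{2}\int_0^\infty e^{-2u}\csc^2\theta_u\,du$ and pass to $W_u:=\cot\theta_u$, which satisfies $dW_u=\frac{4+\rho}{2}W_u\,du-\sqrt{\kappa(W_u^2+1)/2}\,d\til B_u$. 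For large $|W|$ the process $\log|W_u|$ has asymptotic drift $\frac{4+\rho}{2}-\frac{\kappa}{4}$, non-positive exactly in the range $\rho\le \kappa/2-4$; by the law of the iterated logarithm $\log|W_u|=o(u)$ a.s., so $e^{-2u}(1+W_u^2)$ is a.s.\ integrable on $[0,\infty)$ and $T_{z_0}<\infty$. Convergence $\lambda_t\to\lambda_\infty\in\R$ as $t\to T_{z_0}^-$ then follows from the driving SDE, using the same estimate to verify a.s.\ absolute integrability of $\Ree(\rho/Z_t)\,dt$ on $[0,T_{z_0})$ and $T_{z_0}<\infty$ to handle the Brownian part.

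For part (ii), from $Z_t\to 0$ we have $g^\lambda_t(z_0)=\lambda_t+Z_t\to\lambda_\infty$ as well; since $\gamma(t)=f_t(\lambda_t)$ and $z_0=f_t(g^\lambda_t(z_0))$ for every $t<T_{z_0}$, the task reduces to showing both of these evaluations tend to the same point. The main obstacle---the step I expect to be the hardest---is that this is not automatic from $|Z_t|\to 0$, because $f_t$ may have unbounded distortion near $\lambda_\infty$. The strategy is to show $\diam(K_{T_{z_0}^-}\sem K_t)\to 0$ as $t\to T_{z_0}^-$, where $K_{T_{z_0}^-}:=\bigcup_{s<T_{z_0}}K_s$; this is obtained by combining a Koebe distortion estimate for $f_t$ on half-disks in $\HH$ staying bounded away from $\lambda_\infty$ with the fact that the Loewner half-plane capacity of the increment $K_{T_{z_0}^-}\sem K_t$ tends to $0$. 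Since $z_0$ lies in the closure of each such increment (it is the point being swallowed), both $\gamma(t)$ and $z_0$ must coincide with the common limit. Continuity of the curve on $[0,T_{z_0})$ itself is not an issue: Proposition \ref{Prop-Girs} gives $\PP^{\kappa,\rho}_{z_0}\tl\PP_\kappa$, so the Loewner curve is inherited from chordal SLE$_\kappa$ on each $[0,\tau]$ with $\tau<T_{z_0}$.
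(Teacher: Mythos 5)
For part (i), your approach is essentially the paper's approach: perform the same radial time change, pass to a ratio/angle process, and bound $T_{z_0}$ almost surely via Brownian-type estimates. The paper works with $V_s=\arcsinh(\cot\theta)$ rather than with $W_u=\cot\theta_u$ directly, and this is more than cosmetic: the SDE for $V_s$ has constant diffusion coefficient $\sqrt\kappa$ and bounded drift $(\rho+4-\frac\kappa2)\tanh V_s$, which permits a clean pathwise coupling $|V_s|\le|V_0+\sqrt\kappa\til B_s|$ (Lemma \ref{local}) followed by a Gaussian tail bound. Your law-of-iterated-logarithm argument for $\log|W_u|$ is in the right spirit but is not quite rigorous as stated: the SDE for $\log|W_u|$ has diffusion coefficient $\sqrt{\kappa(1+W_u^{-2})/2}$, singular as $W_u\to 0$, so the ``asymptotic drift for large $|W|$'' heuristic does not transfer directly into an LIL bound --- the Dubins--Schwarz clock for $\log|W_u|$ can run super-linearly on time intervals where $|W_u|$ is small. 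This gap is fixable, and the $\arcsinh$ substitution is precisely what fixes it, but as written you have not justified $\log|W_u|=o(u)$. Your Feller-test threshold computation for the inaccessibility of $\{0,\pi\}$ is correct and identifies the same exponent; the paper handles this implicitly through the coupling, which keeps $|V_s|$ finite for all $s$.

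For part (ii), there is a genuine gap in the proposal. You correctly identify that pointwise convergence of the tip to $z_0$ is the hard step, and that $|Z_t|\to0$ together with $\lambda_t\to\lambda_\infty$ do not suffice because $f_t$ may have unbounded distortion near $\lambda_\infty$. But your proposed fix does not close the gap: the increment $K_{T_{z_0}^-}\setminus K_t$ is exactly the image under $f_t$ of a small hull attached near $\lambda_\infty$, so a Koebe distortion estimate valid ``on half-disks staying bounded away from $\lambda_\infty$'' gives no control over the set you need to control. In addition, $\hcap(K_{T_{z_0}^-}\setminus K_t)\to0$ does not by itself imply $\diam(K_{T_{z_0}^-}\setminus K_t)\to0$; this implication fails without an a priori bound on the distortion of $f_t$ near $\lambda_\infty$, which is precisely what is missing. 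In fact the paper does not prove (ii) at all; it cites \cite{MS4}, where transience of SLE$_\kappa(\rho)$ is obtained through the GFF/imaginary geometry coupling. An alternative is the conformal equivalence (via an $\SLE_\kappa(\rho)$ coordinate change) with a radial SLE$_\kappa(\kappa-6-\rho)$ process, combined with a recurrence argument for the radial angle process. Either way, proving (ii) requires a substantial input that the Koebe/$\hcap$ sketch does not supply.
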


From the above proposition we see that, if $\rho\le \frac\kappa 2-4$, then $\PP^{\kappa,\rho}_{z_0}$ is supported by $\Sigma^\oplus$, and we may define $\PP^{\kappa,\rho}_{z_0}\oplus \PP_\kappa$, which is supported by $\Sigma_\Lo$. The pushforward measure $\Lo_*(\PP^{\kappa,\rho}_{z_0}\oplus \PP_\kappa)$ is called the law of a standard extended SLE$_\kappa(\rho)$ curve through $z_0$, which is supported by the continuous curves in $\lin\HH$ from $0$ to $\infty$ that pass through $z_0$. In other words, a standard extended SLE$_\kappa(\rho)$ curve through $z_0$ is defined by continuing  an SLE$_\kappa(\rho)$ curve started from $0$ with force point at $z_0$ by an SLE$_\kappa$ curve in the remaining domain from $z_0$ to $\infty$.


Using conformal maps, we may define an extended SLE$_\kappa(\rho)$ curve (for $\rho\le \frac\kappa 2-4$) in a simply connected domain from one prime end to another prime end through an interior point. We will mainly work on extended SLE$_\kappa(\rho)$ curves in $\HH$ from $0$ to $\infty$ through some $z_0\in\HH$, and so will omit the word ``standard''. 


We now derive the local Radon-Nikodym derivative of $\PP^{\kappa,\rho}_{z_0}$ w.r.t.\ $\PP_\kappa$. Let $\lambda_t=\sqrt\kappa B_t$, and $g_t$ be the Loewner maps driven by $\lambda$. Fix $z_0=x_0+iy_0\in\HH$. For $0\le t< T_{z_0}$, let
\BGE Z_t=g_t(z_0)-\lambda_t,\quad X_t=\Ree Z_t,\quad Y_t=\Imm Z_t,\quad D_t=|g_t'(z_0)|.\label{ZXYD}\EDE
From Loewner's equation, we see that   $Y_t$  and $D_t$ satisfy the ODEs:
\BGE \frac{dY_t}{Y_t}=\frac{-2}{X_t^2+Y_t^2}dt,\quad \frac{dD_t}{D_t}=\frac{-2(X_t^2-Y_t^2)}{(X_t^2+Y_t^2)^2}dt;\label{XYD}\EDE
and $X_t$ satisfies the SDE
\BGE dX_t=-\sqrt\kappa dB_t+\frac{2X_t}{X_t^2+Y_t^2}dt.\label{X}\EDE
Define
\BGE M^{\kappa,\rho}_t(z_0)=|Z_t|^{\frac\rho\kappa}\cdot Y_t^{\frac{\rho^2}{8\kappa}} \cdot D_t^{\frac\rho\kappa(1-\frac \kappa 4+\frac\rho 8)},\quad 0\le t<T_{z_0},\label{M-kappa}\EDE
and \BGE G^{\kappa,\rho}(z_0)=M^{\kappa,\rho}_0(z_0)=|z_0|^{\frac\rho\kappa}\cdot \Imm(z_0)^{\frac{\rho^2}{8\kappa}}.\label{G-kappa-rho} \EDE
Using It\^o's formula and (\ref{XYD})-(\ref{X}), it is straightforward to check that $(M^{\kappa,\rho}_t(z_0))$ is an $(\F^0_t)$-adapted continuous local martingale, and satisfies the SDE:
$$ dM^{\kappa,\rho}_t(z_0)=M^{\kappa,\rho}_t(z_0)\cdot \Ree \frac{\rho/\sqrt\kappa}{\lambda_t-g_t(z_0)}\,dB_t,\quad 0\le t<T_{z_0}.$$ 
We further define
\BGE M^{\kappa,\rho}_t(z_0)=0,\quad T_{z_0}\le t<\infty.\label{M-kappa2}\EDE
For any measurable subset $U$ of $\lin\HH$, define
\BGE \Psi^{\kappa,\rho}_t(U)=\int_{U\cap\HH}  M^{\kappa,\rho}_t(z)dA(z).\label{Psi-U}\EDE
Throughout this paper, we use $dA$ to denote the Lebesgue measure on $\C$. Since $(M^{\kappa,\rho}_t(z))$ is a positive local martingale, it is also a supermartingale. Thus, if $\int_U G^{\kappa,\rho}(z)dA(z)<\infty$, then $(\Psi^{\kappa,\rho}_t(U))$ is also a supermartingale, which has to be $\PP_\kappa$-a.s.\ finite.

From Proposition \ref{Prop-Girs} we know that \BGE \frac{d\PP^{\kappa,\rho}_{z_0}|_{\F^0_t\cap\Sigma_t}}{d\PP_\kappa|_{\F^0_t\cap\Sigma_t}}= \frac{M^{\kappa,\rho}_t(z_0)}{G^{\kappa,\rho} (z_0)},\quad 0\le t<\infty.\label{RN-rho}\EDE
From Proposition \ref{Prop-fubini}, $(z, E)\mapsto \PP^{\kappa,\rho}_z(E)$ is a probability kernel from $\HH$ to $(\Sigma,\F^0)$. Thus, for any measurable subset $U$ of $\lin\HH$, we may define the measure
\BGE \PP^{\kappa,\rho}_U=\int_{U\cap\HH} \PP^{\kappa,\rho}_z G^{\kappa,\rho}(z)dA(z).\label{P-kappa-rho-U}\EDE
Moreover, if $\int_U G^{\kappa,\rho}(z)dA(z)<\infty$, then $\PP^{\kappa,\rho}_U\tl \PP_\kappa$, and
\BGE \frac{d\PP^{\kappa,\rho}_U|_{\F^0_t\cap\Sigma_t}}{d\PP_\kappa|_{\F^0_t\cap\Sigma_t}}=\Psi^{\kappa,\rho}_t(U),\quad 0\le t<\infty.\label{dP-kappa-rho-U}\EDE

\section{Natural Parametrization} \label{section-NP}
Fix $\kappa\in(0,8)$. Let $d=1+\frac\kappa 8$ be the Hausdorff dimension of SLE$_\kappa$ curves. We will review the natural parametrization of SLE$_\kappa$ in this section.

First, we review the definition of the SLE Green's function. The Green's function for an SLE$_\kappa$ curve $\gamma$ in a simply connected domain $D$ from one prime end $a$ to another prime end $b$ is
$$G_{(D;a,b)}(z):= \lim_{r\to 0^+} r^{d-2} \PP[\dist(z,\gamma)<r],\quad z\in D,$$
provided that the limit exists. It is clear that, if the SLE$_\kappa$ Green's function exists for one configuration $(D;a,b)$, then it exists for all configurations, and it satisfies the conformal covariance, i.e., if $f$ maps $(D;a,b)$ conformally onto $(D';a',b')$, then
$$G_{(D;a,b)}(z)=|f'(z)|^{2-d}G_{(D';a',b')}(f(z)).$$
It is proved in  \cite{LR} that there is an unknown positive constant $C_\kappa>0$ depending only on $\kappa$ such that
\BGE G_{(\HH;0,\infty)}(z)=C_\kappa|z|^{d-2}\sin^{\frac\kappa 8+\frac8\kappa-2}(\arg z),\quad z\in\HH,\label{Green-lim}\EDE
We will write $G(z)$ for $G_{(\HH;0,\infty)}(z)$. Define
$$M_t(z)=|g_t'(z)|^{2-d}G(g_t(z)-\lambda(t)),\quad 0\le t<T_z.$$ and $M_t(z)=0$ for $t\ge T_z$. For any measurable set $U\subset\lin\HH$, define $\Psi_t(U)=\int_{U\cap \HH} M_t(z)dA(z)$. It is easy to check that $G(z)$, $M_t(z)$, $\Psi_t(U)$ agree with $C_\kappa$ times $G^{\kappa,\kappa-8}(z)$, $M^{\kappa,\kappa-8}_t(z)$, $\Psi^{\kappa,\kappa-8}_t(U)$, respectively, defined by (\ref{M-kappa})-(\ref{Psi-U}). From (\ref{RN-rho}) we see that locally weighting $\sqrt\kappa B_t$ using $(M_t(z_0)/G(z_0))$ generates a driving  SLE$_\kappa(\kappa-8)$ process with force point at $z_0$.

A two-sided radial SLE$_\kappa$ curve is just an extended SLE$_\kappa(\kappa-8)$ curve. This means that the law of  a (standard) two-sided radial SLE$_\kappa$ curve through $z_0\in\HH$ can be expressed by $\Lo_*(\PP^{\kappa,\kappa-8}_{z_0}\oplus \PP_\kappa)$. Two-sided radial SLE is important because it can be understood as an SLE$_\kappa$ curve conditioned to pass though an interior point. To make this rigorous, one may condition an SLE$_\kappa$ curve $\gamma$ on the event that $\dist(z_0,\gamma)<r$ for a fixed interior point $z_0$, and then pass to the limit $r\to 0$.

From the reversibility of chordal SLE$_\kappa$ (cf.\ \cite{reversibility,MS3}) it is easy to see that  two-sided radial SLE$_\kappa$  also  satisfies  reversibility, i.e., the time-reversal of a two-sided radial SLE$_\kappa$ curve in a simply connected domain $D$ from   $a$ to   $b$ through  $z_0$ agrees with a two-sided radial SLE$_\kappa$ curve in  $D$ from $b$ to $a$ through $z_0$, up to a reparametrization. This property is not satisfied by extended SLE$_\kappa(\rho)$ processes for $\rho\ne \kappa-8$.

Recall that $\PP^{\kappa,\kappa-8}_{\cdot}$ is a kernel from $\HH$ to $(\Sigma,\F^0)$. So we can defined the measure $\PP_U=\int_{U\cap\HH} \PP^{\kappa,\kappa-8}_t(z)G(z)dA(z)$ for any measurable set $U\subset \lin\HH$. Then $\PP_U$ equals $C_\kappa$ times the  $\PP^{\kappa,\kappa-8}_U$ defined by (\ref{P-kappa-rho-U}).  From (\ref{dP-kappa-rho-U}), if $\int_U G(z)dA(z)<\infty$, then $\PP_U\tl \PP_\kappa$ and
\BGE \frac{d \PP_U|_{\F_t^0\cap\Sigma_t}}{d\PP_\kappa|_{\F_t^0\cap\Sigma_t }}=\Psi_t(U),\quad 0\le t<\infty.\label{Psi-U-RN}\EDE

It is proved by Lawler and Zhou (\cite{LZ}) that, if $U$ is a pre-compact measurable subset of $\HH$, i.e., $\lin U$ is a compact subset of $\HH$, then $(\Psi_t(U))$  is of class $\cal D$, i.e.,
$\{\Psi_T(U):T\mbox{ is a finite stopping time}\}$
is uniformly integrable. In this case they can apply the Doob-Meyer decomposition theorem to get a unique continuous increasing $(\F^B_t)$-adapted process $\Theta_t(U)$ such that $\Theta_0(U)=0$ and $M_t(U):=\Psi_t(U)+\Theta_t(U)$
is a uniformly integrable $(\F^B_t)$-martingale. This means that $M_\infty(U):=\lim_{t\to\infty}M_t(U)$ a.s.\ exists, and $\EE_\kappa[M_\infty(U)|\F^B_t]=M_t(U)$, $0\le t<\infty$. From Lemma \ref{Psi-infty}, we know that $\Psi_\infty(U)=0$. Thus,
\BGE \EE[\Theta_\infty(U)|\F^B_t]-\Theta_t(U)=\Psi_t(U),\quad 0\le t<\infty.\label{Theta-Theta}\EDE

The process $(\Theta_t(U))$ determines a $\PP_\kappa$-kernel $d\Theta (U)$. Then we may define the killing   $\K_{d\Theta(U)}(\PP_\kappa)$. From (\ref{trunc'}), we have
\BGE \PP_\kappa-a.s.,\quad \frac{d \K_{d\Theta (U)}(\PP_\kappa)|_{\F_t^0\cap\Sigma_t}}{d\PP_\kappa|_{\F_t^0\cap\Sigma_t}}=\EE[\Theta_\infty(U)|\F^B_t]-\Theta_t(U),\quad 0\le t<\infty.\label{T-Theta}\EDE
Combining (\ref{Psi-U-RN}), (\ref{Theta-Theta}) and (\ref{T-Theta}), we see that,
\BGE \PP_U= \K_{d\Theta(U)}(\PP_\kappa).\label{T=P}\EDE

It is easy to check that, if $U_1$ and $U_2$ are disjoint pre-compact measurable subsets of $\HH$, then a.s.\ $\Theta_t(U_1\cup U_2)=\Theta_t(U_1)+\Theta_t(U_2)$ for $0\le t<\infty$. Thus, the kernel $d\Theta_t(U)$ (and also $\Theta_t(U)$) is increasing in $U$. We may extend the definition of $\Theta_t(U)$ to any measurable subset $U$ of $\lin\HH$ as follows. Let $(U_n)$ be a sequence of compact subsets of $\HH$ such that $U_n$ is contained in the interior of $U_{n+1}$ for every $n\in\N$, and $\HH=\bigcup U_n$. For any measurable subset $U$ of $\lin\HH$, we first define the kernel $\mu_U=\lim_{n\to\infty} d\Theta (U\cap U_n)$, then let $\Theta_t(U)=\mu_U([0,t])$, $t\ge 0$ (so $d\Theta(U)=\mu_U$). The definition does not depend on the choice of $(U_n)$. 
In particular, $\Theta_t:=\Theta_t(\lin\HH)$ is called the natural parametrization of $\gamma$.

The $\Theta_t(U)$ is understood as the total time that the chordal SLE$_\kappa$ curve $\gamma$ spends in $U$ in the natural parametrization before  time $t$. Define the kernel ${\cal M}_U$ from $\Lo(\Sigma^\Lo)$ to $\lin\HH$   by \BGE {\cal M}_U(\gamma,\cdot):=\gamma_*(d\Theta(U))\label{MU}\EDE This is the $d$-dimensional Minkowski content measure  of $\gamma $ in $U$.

Here the Minkowski content measure of a set $S$ in a domain $U$ is a measure, say ${\cal M}$ supported by $S\cap U$, which satisfies the property that for any compact set $K\subset U$, $K\cap S$ has Minkowski content, which equals to ${\cal M}(K)$ and is finite. It satisfies the property of conformal covariance. From the work of \cite{LR}, we know that an SLE$_\kappa$ curve  for $\kappa\in(0,8)$ in $\HH$ from $0$ to $\infty$ possesses $(1+\frac{\kappa}{8})$-dimensional Minkowski content measure, which equals to the pushforward measure of the curve function of the natural parametrization (as a measure on the time interval). Using conformal covariance, we then know that any chordal SLE$_\kappa$ curve possesses Minkowski content measure in the domain that it is defined. The reader is referred to \cite[Section 2.3]{loop} for details of the notation. 

\begin{Theorem} Let $\kappa\in(0,8)$. Let $U$ be any measurable subset of $\lin\HH$. Then we have
\BGE {\PP_U\ha\oplus\PP_\kappa}=\PP_\kappa\otimes d\Theta(U),\label{pre-main-thm}\EDE
  \BGE \Lo_*(\PP^{\kappa,\kappa-8}_z \oplus\PP_\kappa)\overleftarrow\otimes{\bf 1}_U G(z)dA(z)=\Lo_*(\PP_\kappa)\otimes {\cal M}_U,\label{int-U}\EDE
where ${\cal M}_U$ is the $d$-dimensional Minkowski content measure of $\gamma\cap U$.
\label{Thm1}\end{Theorem}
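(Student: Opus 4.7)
The plan is to establish (\ref{pre-main-thm}) first for pre-compact $U\subset\HH$ by combining identity (\ref{T=P}) with Proposition~\ref{Prop-Pkdecmp}, then extend to an arbitrary measurable $U\subset\lin\HH$ by monotone convergence, and finally deduce (\ref{int-U}) by pushing (\ref{pre-main-thm}) forward under the extended Loewner map $\ha\Lo$.

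For pre-compact $U$, the Doob-Meyer construction supplies a continuous, $(\F^B_t)$-adapted, increasing process $(\Theta_t(U))$ with $\Theta_0(U)=\Theta_{0^+}(U)=0$, and setting $t=0$ in (\ref{Theta-Theta}) gives $\EE_\kappa[\Theta_\infty(U)]=\Psi_0(U)=\int_U G(z)\,dA(z)<\infty$, which is finite since $G$ is bounded on pre-compact subsets of $\HH$. The hypotheses of Proposition~\ref{Prop-Pkdecmp} are thus met by $\theta_t:=\Theta_t(U)$, yielding $\K_{d\Theta(U)}(\PP_\kappa)\ha\oplus\PP_\kappa=\PP_\kappa\otimes d\Theta(U)$, which combined with (\ref{T=P}) is (\ref{pre-main-thm}) in this case. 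For a general measurable $U\subset\lin\HH$, I would use the sequence $(U_n)$ from the construction of $d\Theta(U)$ to approximate: by definition $d\Theta(U\cap U_n)\uparrow d\Theta(U)$, and $\PP_{U\cap U_n}\uparrow\PP_U$ follows from monotone convergence in (\ref{P-kappa-rho-U}); since $\ha\oplus$ is linear in its first argument, both sides of (\ref{pre-main-thm}) increase monotonically to the desired limits, and equality at each $n$ passes to the limit.

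For (\ref{int-U}), I would apply the pushforward $\ha\Lo_*$ to both sides of (\ref{pre-main-thm}). On the left, an extended SLE$_\kappa(\kappa-8)$ driving function $\lambda_1\oplus\lambda_2$ with $\lambda_1\sim\PP^{\kappa,\kappa-8}_z$ and $\lambda_2\sim\PP_\kappa$ satisfies $\Lo(\lambda_1\oplus\lambda_2)(T_{\lambda_1})=z$ by Proposition~\ref{transience-0}(ii) (valid since $\kappa-8\le\frac{\kappa}{2}-4$ for $\kappa\in(0,8)$), so $\ha\Lo_*(\PP^{\kappa,\kappa-8}_z\ha\oplus\PP_\kappa)=\Lo_*(\PP^{\kappa,\kappa-8}_z\oplus\PP_\kappa)\otimes\delta_z$, and integrating this against ${\bf 1}_U G(z)\,dA(z)$ via Tonelli recovers the LHS of (\ref{int-U}). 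On the right, $\ha\Lo_*(\PP_\kappa\otimes d\Theta(U))$ unfolds via Tonelli and the definition (\ref{MU}) of ${\cal M}_U$ directly to $\Lo_*(\PP_\kappa)\otimes{\cal M}_U$. The main obstacle I anticipate is the careful measure-theoretic bookkeeping of these pushforward identifications, making sure the kernel structures $\overleftarrow\otimes$ and $\otimes$ are correctly tracked; the substantive analytic content (Doob-Meyer for $(\Psi_t(U))$, the Green's function identity, and the endpoint continuity of SLE$_\kappa(\kappa-8)$) is already packaged into (\ref{T=P}), (\ref{Theta-Theta}), and Proposition~\ref{transience-0}, so once the bookkeeping is arranged the theorem follows essentially by assembly.
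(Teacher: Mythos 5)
Your proposal is correct and follows essentially the same route as the paper: (\ref{T=P}) plus Proposition~\ref{Prop-Pkdecmp} for pre-compact $U$, a monotone limit over the exhausting sequence $(U_n)$ for general $U$, and then pushforward under $\ha\Lo$. The only thing you add beyond the paper's sketch is the explicit check of the hypotheses of Proposition~\ref{Prop-Pkdecmp} and the observation that Proposition~\ref{transience-0}(ii) is what identifies the Dirac mass $\delta_z$; both are correct and welcome, but they amount to filling in details rather than a different argument.
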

\begin{proof}
If $U$ is pre-compact in $\HH$, then (\ref{pre-main-thm}) follows from (\ref{T=P}) and Proposition \ref{Prop-Pkdecmp}. For general measurable $U\subset\HH$, (\ref{pre-main-thm}) follows from the above special case and a limiting procedure.

We now apply the extended Loewner map $\ha\Lo(\lambda,t)=(\Lo(\lambda),\Lo(\lambda)(t))$ to (\ref{pre-main-thm}).
We observe that $\ha\Lo_*(\PP^{\kappa,\kappa-8}_z\ha\oplus\PP_\kappa)=\Lo_*(\PP^{\kappa,\kappa-8}_z \oplus\PP_\kappa)\otimes\delta_z$, where $\delta_z$ is the Dirac measure at $z$. Integrating the equality over $z$ against the measure ${\bf 1}_U G(z)dA(z)={\bf 1}_{U\cap \HH} G(z)dA(z)$, we get
\BGE \ha\Lo_*(\PP^{\kappa,\kappa-8}_U\ha\oplus\PP_\kappa)= \Lo_*(\PP^{\kappa,\kappa-8}_z \oplus\PP_\kappa)\overleftarrow\otimes{\bf 1}_U G(z)dA(z).\label{Lo1}\EDE

Using (\ref{MU}) and the definition of $\ha\Lo$, we find that \BGE \ha\Lo_*(\PP_\kappa\otimes d\Theta(U))=\Lo_*(\PP_\kappa)\otimes {\cal M}_U.\label{Lo2}\EDE
Then (\ref{pre-main-thm}), (\ref{Lo1}) and (\ref{Lo2}) together imply (\ref{int-U}).
\end{proof}

\begin{Corollary}
  Let $\kappa\in(0,8)$. Let $U$ be a measurable subset of $\HH$ with $\int_U G(z)dA(z)<\infty$. If we integrate the laws of two-sided radial SLE$_\kappa$ curves through $z$ against the measure ${\bf 1}_U G(z)dA(z)$, then we get a finite measure on curves, which is absolutely continuous w.r.t.\ the law of SLE$_\kappa$ curve, and the Radon-Nikodym derivative is the $d$-dimensional Minkowski content of $\gamma\cap U$. \label{Cor1}
\end{Corollary}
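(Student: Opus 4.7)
The plan is to extract the corollary as an immediate consequence of Theorem \ref{Thm1}, specifically identity (\ref{int-U}), by projecting both sides onto the curve coordinate. First I would observe that the left-hand side of (\ref{int-U}), namely $\Lo_*(\PP^{\kappa,\kappa-8}_z \oplus \PP_\kappa) \overleftarrow{\otimes} {\bf 1}_U G(z) dA(z)$, is a measure on $\Sigma^\C \times \lin\HH$ whose first marginal is exactly $\int_{U\cap \HH} \Lo_*(\PP^{\kappa,\kappa-8}_z\oplus \PP_\kappa)\, G(z) dA(z)$. Since the law $\Lo_*(\PP^{\kappa,\kappa-8}_z\oplus \PP_\kappa)$ is precisely the law of a two-sided radial SLE$_\kappa$ curve through $z$, this is the measure appearing in the statement of the corollary.

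Next I would compute the first marginal of the right-hand side $\Lo_*(\PP_\kappa) \otimes {\cal M}_U$. By the definition ${\cal M}_U(\gamma,\cdot) = \gamma_*(d\Theta(U))$, the total mass of ${\cal M}_U(\gamma,\cdot)$ on $\lin\HH$ equals $\Theta_\infty(U)(\gamma)$, the total natural-parametrization time the curve spends in $U$. By the result of Lawler--Rezaei \cite{LR} recalled in Section \ref{section-NP}, $\Theta_\infty(U)$ equals the $d$-dimensional Minkowski content of $\gamma \cap U$. Thus the first marginal of $\Lo_*(\PP_\kappa)\otimes {\cal M}_U$ is the law of a chordal SLE$_\kappa$ curve biased by this Minkowski content, exactly as claimed.

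Equating these two first marginals gives the absolute continuity together with the identification of the Radon--Nikodym derivative. For finiteness of the resulting measure, I would note that its total mass is
$$\int_{U\cap \HH} G(z) dA(z) \cdot \Lo_*(\PP^{\kappa,\kappa-8}_z\oplus\PP_\kappa)(\Sigma^\C) = \int_U G(z) dA(z)<\infty,$$
using the hypothesis and the fact that each $\Lo_*(\PP^{\kappa,\kappa-8}_z\oplus\PP_\kappa)$ is a probability measure (by Proposition \ref{transience-0}, which ensures $\PP^{\kappa,\kappa-8}_z$ is supported on $\Sigma^\oplus$).

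There is no real obstacle here: the entire content is packaged into Theorem \ref{Thm1}, and the corollary is just the ``curve-only'' version obtained by forgetting the interior marker point $z$. The only mild care needed is in translating between the kernel notation ${\cal M}_U$ and its total mass $\Theta_\infty(U)$, and in invoking the Lawler--Rezaei identification of natural parametrization with Minkowski content to rephrase $\Theta_\infty(U)$ geometrically.
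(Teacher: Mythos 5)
Your proof is correct and takes essentially the same approach as the paper: project both sides of (\ref{int-U}) to $\Sigma^\C$, identify the resulting Radon--Nikodym derivative with $\Theta_\infty(U)$ (hence the Minkowski content), and deduce finiteness from $\int_U G(z)\,dA(z)<\infty$. The only cosmetic difference is that you compute the total mass from the left-hand side while the paper records it as $\EE_\kappa[\Theta_\infty(U)]$; both give the same number.
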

\begin{proof}
  This follows from projecting both sides of  (\ref{int-U}) to $\Sigma^\C$ and that $\EE_\kappa[{\cal M}_U(\gamma,\cdot)]= \EE_\kappa[\Theta_\infty(U)]=\int_U G(z)dA(z)<\infty$.
\end{proof}

\begin{Corollary}
  Let $\kappa\in(0,8)$. Suppose $( \gamma,  z)$ is a $C([0,\infty),\C)\times \lin\HH$-valued random variable with the properties that $ \gamma$ has the law of an  SLE$_\kappa$ curve, and given $ \gamma$, the law of  $  z$ is absolutely continuous w.r.t.\  the $d$-dimensional Minkowski content measure of $\gamma$. Then the law of $z$ is absolutely continuous w.r.t.\ ${\bf 1}_\HH dA(z)$, and the law of $ \gamma$ given $z$ is absolutely continuous w.r.t.\ the law of a two-sided radial SLE$_\kappa$ curve through $  z$. \label{Cor2}
\end{Corollary}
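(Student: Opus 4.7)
My plan is to transport the joint law $\mu$ of $(\gamma,z)$ through the identity (\ref{int-U}) of Theorem \ref{Thm1} and then disintegrate along the $z$-coordinate. The two hypotheses say that $\mu$ on $\Sigma^\C\times\lin\HH$ has $\Sigma^\C$-marginal $\Lo_*(\PP_\kappa)$ and that the regular conditional law of $z$ given $\gamma$ is absolutely continuous w.r.t.\ the Minkowski content measure ${\cal M}_\HH(\gamma,\cdot)$. Together these imply $\mu\ll\Lo_*(\PP_\kappa)\otimes{\cal M}_\HH$, so Radon--Nikodym yields a nonnegative measurable density $h(\gamma,z)$ with
$$d\mu(\gamma,z)=h(\gamma,z)\,d(\Lo_*(\PP_\kappa)\otimes{\cal M}_\HH)(\gamma,z).$$
The reference measure is $\sigma$-finite even though $\int_\HH G\,dA=\infty$, because $\HH$ can be exhausted by pre-compact $U_n$ with $\EE_\kappa[\Theta_\infty(U_n)]=\int_{U_n}G\,dA<\infty$, so each $\Lo_*(\PP_\kappa)\otimes{\cal M}_{U_n}$ is a finite measure.

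Now substituting (\ref{int-U}) with $U=\HH$ rewrites this as
$$d\mu(\gamma,z)=h(\gamma,z)\,\Lo_*(\PP^{\kappa,\kappa-8}_z\oplus\PP_\kappa)(d\gamma)\,G(z)\,{\bf 1}_\HH(z)\,dA(z).$$
Integrating out $\gamma$ by Tonelli, the $z$-marginal of $\mu$ has density $\bar h(z)G(z){\bf 1}_\HH(z)$ with respect to $dA$, where
$$\bar h(z):=\int h(\gamma,z)\,d\Lo_*(\PP^{\kappa,\kappa-8}_z\oplus\PP_\kappa)(\gamma),$$
which immediately proves that the law of $z$ is absolutely continuous w.r.t.\ ${\bf 1}_\HH\,dA(z)$. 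On the full $z$-marginal measure set $\{\bar h(z)>0\}$, the Bayes formula then gives the regular conditional law of $\gamma$ given $z$ as the probability measure
$$\frac{h(\gamma,z)}{\bar h(z)}\,\Lo_*(\PP^{\kappa,\kappa-8}_z\oplus\PP_\kappa)(d\gamma),$$
which is absolutely continuous w.r.t.\ the law of a two-sided radial SLE$_\kappa$ curve through $z$, proving the second assertion.

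I do not expect any serious obstacle: the core identity (\ref{int-U}) does all of the work, and what remains is a standard disintegration. The only delicate point is the $\sigma$-finiteness verification in the opening Radon--Nikodym step, which is handled by the monotone exhaustion of $\HH$ by pre-compact sets together with the monotonicity of ${\cal M}_U$ in $U$, exactly paralleling the extension of $\Theta_t(U)$ from pre-compact to general $U$ at the end of Section \ref{section-NP}.
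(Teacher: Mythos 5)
Your proof is correct and follows essentially the same route as the paper's one-line argument: the paper simply observes that the joint law of $(\gamma,z)$ is absolutely continuous with respect to both sides of (\ref{int-U}) with $U=\HH$, and reads off the two conclusions from the two factorizations, while you spell out the disintegration, the Radon–Nikodym density $h$, and the $\sigma$-finiteness of $\Lo_*(\PP_\kappa)\otimes{\cal M}_\HH$ via the pre-compact exhaustion — all details the paper leaves implicit.
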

\begin{proof}
  From the assumption, we see that the law of $( \gamma,  z)$ is absolutely continuous w.r.t.\ the measure in either side of (\ref{int-U}) with $U=\HH$.
\end{proof}


\no{\bf Remarks.}
\begin{enumerate}
  \item Roughly speaking, the meaning of (\ref{int-U}) is that the following two methods generate the same measure on the space of curve-point pairs:
      \begin{enumerate}
        \item [(i)] first sample a point according to the measure ${\bf 1}_U G(z)dA(z)$, and then sample a two-sided radial SLE$_\kappa$ curve $\gamma$ through $z$;
        \item [(ii)] first sample an SLE$_\kappa$ curve $\gamma$, and then sample a point $z$ on $\gamma$ according to the $d$-dimensional Minkowski content measure of $\gamma\cap U$. Here the measure of $\gamma$ is changed after sampling $z$ because ${\cal M}_U$ is not a probability kernel.
            \end{enumerate}
  \item Corollary \ref{Cor2} means that, if we sample a random point $z$ on a chordal SLE$_\kappa$ curve $\gamma$ according to a law that is absolutely continuous w.r.t.\ the $d$-dimensional Minkowski content measure, and then condition on $z$, then the curve $\gamma$ looks ``similar'' to a two-sided radial SLE$_\kappa$ curve through $z$.
\end{enumerate}

\begin{Corollary}
Let $D$ be a simply connected domain with two distinct prime ends $a$ and $b$. Let $\PP^\kappa_{D;a,b}$ denote the law of an SLE$_\kappa$ curve in $D$ from $a$ to $b$. For any $z\in D$, let $\PP^\kappa_{D;a,b;z}$ denote the law of a two-sided radial SLE$_\kappa$ curve in $D$ from $a$ to $b$ through $z$.  For any measurable set $U\subset D$, let ${\cal M}_U (\gamma,\cdot)$ be the $d$-dimensional Minkowski content measure on $\gamma\cap U$. Then
\BGE \PP^\kappa_{D;a,b;z}\overleftarrow \otimes {\bf 1}_U G_{D;a,b}(z)dA(z)= \PP^\kappa_{D;a,b}\otimes {\cal M}_U. \label{int-U''}\EDE
Moreover, if $\int_U G_{D;a,b}(z)dA(z)<\infty$, then $\int_U \PP^\kappa_{D;a,b;z} dA(z)\ll\PP^\kappa_{D;a,b}$, and the Radon-Nikodym derivative is the $d$-dimensional Minkowski content of $\gamma\cap U$.
\label{Cor-general}
\end{Corollary}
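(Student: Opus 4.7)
The plan is to deduce Corollary \ref{Cor-general} from the $(\HH;0,\infty)$ version, namely equation (\ref{int-U}) in Theorem \ref{Thm1}, via conformal transfer. Fix a conformal map $f:\HH\to D$ with $f(0)=a$, $f(\infty)=b$, and consider the product map $F:\Sigma^\C\times\lin\HH\to \Sigma^\C\times\lin D$ given by $F(\gamma,w)=(f\circ\gamma,f(w))$. Write $U_\HH=f^{-1}(U)\subset\HH$, apply (\ref{int-U}) with $U_\HH$ in place of $U$, and push forward both sides by $F$.

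Three conformal covariance facts are needed, and are assembled to see that all Jacobian-type factors cancel. First, by the conformal invariance of chordal and two-sided radial SLE$_\kappa$ (Section \ref{section-SLE}), pushing $\Lo_*(\PP_\kappa)$ and $\Lo_*(\PP^{\kappa,\kappa-8}_w\oplus\PP_\kappa)$ by $\gamma\mapsto f\circ\gamma$ produces $\PP^\kappa_{D;a,b}$ and $\PP^\kappa_{D;a,b;f(w)}$ respectively, modulo a time reparametrization that is immaterial for the statement. Second, the Green's function covariance gives $G_{\HH;0,\infty}(w)=|f'(w)|^{2-d}G_{D;a,b}(f(w))$, and combined with the area Jacobian $dA(z)=|f'(w)|^2 dA(w)$ under $z=f(w)$, the measure $\mathbf{1}_{U_\HH}(w)G_{\HH;0,\infty}(w)\,dA(w)$ pushes forward to $|f'(f^{-1}(z))|^{-d}\mathbf{1}_U(z)G_{D;a,b}(z)\,dA(z)$. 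Third, the $d$-dimensional Minkowski content measure obeys $d{\cal M}^D_{U}(f\gamma,f(w))=|f'(w)|^d\,d{\cal M}^\HH_{U_\HH}(\gamma,w)$, so pushing ${\cal M}^\HH_{U_\HH}(\gamma,\cdot)$ by $f$ yields $|f'(f^{-1}(\cdot))|^{-d}{\cal M}^D_U(f\gamma,\cdot)$. The factor $|f'(f^{-1}(z))|^{-d}$ appearing on each side is a function of $z$ alone, so it cancels, leaving exactly (\ref{int-U''}).

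The moreover statement then follows by projecting both sides of (\ref{int-U''}) onto the curve coordinate $\Sigma^\C$, in the same manner as the proof of Corollary \ref{Cor1}: the marginal of the left-hand side is the measure obtained by integrating the two-sided radial laws against $\mathbf{1}_U G_{D;a,b}(z)\,dA(z)$, the marginal of the right-hand side is the measure $\mathbf{E}[{\cal M}^D_U(\gamma,D)\,|\,\gamma]\,d\PP^\kappa_{D;a,b}$, and the finiteness hypothesis $\int_U G_{D;a,b}(z)\,dA(z)<\infty$ guarantees that both are finite measures so absolute continuity and the stated Radon–Nikodym derivative are well-defined.

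The main obstacle is the bookkeeping for the conformal covariance of the Minkowski content measure as a kernel, not merely pointwise on curves. This rests on the result of \cite{LR} quoted after (\ref{MU}), which establishes the existence of ${\cal M}^D_U(\gamma,\cdot)$ for SLE$_\kappa$ curves in arbitrary simply connected domains and its conformal covariance with weight $|f'|^d$; once this is in hand, the measurability and $\sigma$-finiteness needed to push the product measure through $F$ are routine. A minor secondary point is that the SLE$_\kappa$ curve in $D$ is defined only up to reparametrization, but since ${\cal M}^D_U$ and $f\circ\gamma$ depend only on the image of $\gamma$, both sides of (\ref{int-U''}) are invariant under such reparametrizations, so the identity makes sense as written.
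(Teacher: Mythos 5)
Your proof is correct and follows the same route the paper takes: apply Theorem \ref{Thm1} to $f^{-1}(U)$ and push forward by $(\gamma,w)\mapsto(f\circ\gamma,f(w))$, invoking the conformal covariance of the Green's function and of the Minkowski content measure; the exponent bookkeeping showing the $|f'(f^{-1}(z))|^{-d}$ factors cancel is exactly what the paper leaves implicit. The only caveat is that the paper's ``moreover'' clause omits the factor $G_{D;a,b}(z)$ in the integral $\int_U\PP^\kappa_{D;a,b;z}\,dA(z)$ — your reading of it with the Green's function weight, parallel to Corollary \ref{Cor1}, is the intended statement.
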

\begin{proof}
  Formula (\ref{int-U''}) follows immediately from (\ref{int-U}), the conformal covariance of SLE$_\kappa$ Green's function,  and the transformation rule of $d$-dimensional Minkowski content measure under confomal maps. The rest follows from projecting the measures in (\ref{int-U''})  to $\Sigma^\C$.
\end{proof}

\no{\bf Remark.} The above corollary in the case $\kappa\le 4$ was proved earlier by Laurie Field  in \cite{Fie}, where he assumed that the domain $D$ is bounded and has analytic boundary.

\vskip 4mm
Inspired by Theorem \ref{Thm1} and its corollaries, we make the following definition.

\begin{Definition}
  Let $\kappa>0$ and $\rho\le \frac\kappa 2-4$. We say that SLE$_\kappa$ admits an SLE$_\kappa(\rho)$ decomposition if (\ref{pre-main-thm}) holds with $G(z)$ replaced by $G^{\kappa,\rho}(z)$ and $\PP_U$ replaced by $\PP^{\kappa,\rho}_U$, for some increasing adapted process $\Theta_t(U)$, which then implies (\ref{int-U}) with $\PP^{\kappa,\kappa-8}_z$ replaced by $\PP^{\kappa,\rho}_z$ and ${\cal M}_U$ defined by (\ref{MU}) for the new $\Theta_t(U)$.
\end{Definition}

We have shown that SLE$_\kappa$ admits an SLE$_\kappa(\kappa-8)$ decomposition for $\kappa\in (0,8)$. From the argument, we see that, one approach to show that SLE$_\kappa$ admits an SLE$_\kappa(\rho)$ decomposition is to show that for any pre-compact measurable subset $U$ of $\HH$,  $\Psi^{\kappa,\rho}_t(U)$ defined in (\ref{Psi-U}) is of class $\cal D$, and then apply the Doob-Meyer decomposition theorem. But this is very difficult in general (cf.\ \cite{LS,LZ}). In the next section, we will show that SLE$_\kappa$ admits an SLE$_\kappa( -8)$ decomposition for any $\kappa>0$ using a different approach.

\section{Capacity Parametrization} \label{section-capacity}
Fix $\kappa>0$. We say that a chordal SLE$_\kappa$ curve $\gamma$ (with the reparametrization given by its definition) has capacity parametrization because the hull $K_t$ determined by $\gamma([0,t])$ has half-plane capacity $2t$ for each $t\ge 0$.
In this section we consider SLE$_\kappa(\rho)$ processes with $\rho=-8$, which turns out to be closely related to the capacity parametrization.

For any $z\in\HH$ and $N>0$, let $\PP^{\kappa,-8}_{z;N}$ be the measure on $(\Sigma,\F^0)$ defined by  $\PP^{\kappa,-8}_{z;N}(E)=\PP^{\kappa,-8}_z(E\sem \Sigma_N)$. Note that $\PP^{\kappa,-8}_{z;N}$ is not a probability measure. Then we have $\PP^{\kappa,-8}_{z;N}\ll\PP^{\kappa,-8}_z$, and so $\PP^{\kappa,-8}_{z;N}\tl\PP^{\kappa,-8}_z$. Since $\PP^{\kappa,-8}_z\tl\PP_\kappa$, we get $\PP^{\kappa,-8}_{z;N}\tl\PP_\kappa$.
We now calculate the local Radon-Nikodym derivatives of $\PP^{\kappa,-8}_{z;N}$ w.r.t.\ $\PP_\kappa$.

Let $\lambda_t=\sqrt\kappa B_t$, and $g_t$ the Loewner maps driven by $\lambda$.
Let $M_t(z)=M^{\kappa,-8}_t(z)$ and $G(z)=G^{\kappa,-8}(z)$ be as defined by (\ref{M-kappa})-(\ref{M-kappa2}). Then we have $G(z)=(\Imm z/|z|)^{8/\kappa}$; $M_t(z)=G(g_t(z)-\lambda_t)|g_t'(z)|^2$, $0\le t<T_z$; and $M_t(z)=0$, $T_z\le t<\infty$. 

Let $t\in[0,\infty)$ and $E\in\F_t^0\cap\Sigma_t$. If $t\ge N$, then $\PP^{\kappa,-8}_{z;N}(E)=0$ because $E\subset\Sigma_t\subset\Sigma_N$. If $t<N$, from (\ref{RN-rho}), we get
$$\PP^{\kappa,-8}_{z;N}(E)=\PP^{\kappa,-8}_z(E\sem \Sigma_N)=\PP^{\kappa,-8}_z(E)-\PP^{\kappa,-8}_z(E\cap \Sigma_N)$$
$$=\int_E \frac{M_t(z)}{G(z)} d\PP_\kappa-\int_{E\cap \Sigma_N} \frac{M_N(z)}{G(z)}d\PP_\kappa=\frac 1{G(z)}\int_E( M_t(z)-\EE_\kappa[ M_N(z)|\F_t^0])d\PP_\kappa.$$
Here the third ``='' holds because $E\in\F_t^0\cap\Sigma_t$ and $E\cap\Sigma_N\in\F_N^0\cap\Sigma_N$, and the fourth ``='' holds because $\PP_\kappa$ is supported by $\Sigma_\infty\subset\Sigma_N$. Thus,
\BGE \frac{d\PP^{\kappa,-8}_{z;N}|_{\F_t^0\cap\Sigma_t}}{d\PP_\kappa|_{\F_t^0\cap\Sigma_t}}=\left\{
\begin{array}{ll} \frac 1{G(z)}(M_{t }(z)-\EE_\kappa[ M_N(z)|\F_t^0]),& 0\le t<N;\\ 0,&N\le t<\infty.
\end{array}\right.\label{PNRN}\EDE

For $t\ge 0$, define $G_t$ on $\HH$ by
$$G_t(z)=M_0(z)-\EE_\kappa[M_t(z)]=G(z)-\EE_\kappa[M_t(z)],$$\
and let $C_{\kappa,t}=\int_\HH G_t(z)dA(z)$.
From (\ref{RN-rho}), we get
\BGE\frac{G_t(z)}{G(z)}=1-\int_{\Sigma_t} \frac{M_t(z)}{G(z)}d\PP_\kappa=1-\PP^{\kappa,\rho}_z[\Sigma_t]=\PP^{\kappa,\rho}_z[T_z\le t].\label{Tz<}\EDE
Thus, $G_0\equiv 0$, $G_t\ge 0$ for $t>0$, and
\BGE \lim_{t\to\infty} G_t(z)=G(z)\PP^{\kappa,\rho}_z[T_z<\infty]=G(z).\label{limGt}\EDE
From the scaling property of SLE$_\kappa(-8)$ process, we get
$\frac{G_{a^2 t}(az)}{G(az)}=\frac{G_t(z)}{G(z)}$ for any $a>0$.
Since $G(az)=G(z)$, we get $G_{a^2 t}(az)=G_t(z)$. Especially, we have
\BGE G_t(z)=G_1(\frac z{\sqrt t}),\quad t>0.\label{scalingGt}\EDE

\begin{Lemma}
  We have $C_{\kappa,t}=tC_{\kappa,1}$ for every $t\ge0$, and $C_{\kappa,1}\in(0,\infty)$. \label{LemC}
\end{Lemma}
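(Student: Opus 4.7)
The plan has three parts: (i) derive the scaling relation $C_{\kappa,t}=tC_{\kappa,1}$ by change of variables, (ii) establish $C_{\kappa,1}>0$ via a positivity/continuity argument, and (iii) establish $C_{\kappa,1}<\infty$ via an asymptotic expansion as $|z|\to\infty$.

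For (i), I apply the scaling identity (\ref{scalingGt}): with the substitution $w=z/\sqrt t$ (so $dA(z)=t\,dA(w)$),
\begin{equation*}
C_{\kappa,t}=\int_\HH G_1(z/\sqrt t)\,dA(z)=t\int_\HH G_1(w)\,dA(w)=tC_{\kappa,1}.
\end{equation*}

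For (ii), since $-8\le \kappa/2-4$ for every $\kappa>0$, Proposition \ref{transience-0} gives $T_z<\infty$ $\PP^{\kappa,-8}_z$-a.s.\ for every $z\in\HH$. Fix any $z_0\in\HH$; there is $t_0$ with $\PP^{\kappa,-8}_{z_0}[T_{z_0}\le t_0]>0$, which by (\ref{Tz<}) means $G_{t_0}(z_0)>0$. Continuity of $(z,t)\mapsto G_t(z)$ (inherited from continuity in $z$ of the local Radon-Nikodym derivative (\ref{RN-rho}) and from $G(z)$ itself) extends this to $G_{t_0}>0$ on a neighborhood of $z_0$, so $C_{\kappa,t_0}>0$, and (i) then gives $C_{\kappa,1}>0$.

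For (iii), the main task is an estimate $G_1(z)=O(|z|^{-\alpha})$ with some $\alpha>2$. Given such a bound, the integral of $G_1$ over $\{|z|>1\}\cap\HH$ converges since it is controlled by $\int_1^\infty r^{1-\alpha}\,dr<\infty$, while the integral over $\{|z|\le 1\}\cap\HH$ is bounded by $\pi$ because $G_1\le G\le 1$. To obtain the estimate, expand the Loewner flow for large $|z|$ as
\begin{equation*}
g_t(z)=z+\frac{2t}{z}+\frac{2I_1(t)}{z^2}+O(|z|^{-3}),\qquad g_t'(z)=1-\frac{2t}{z^2}+O(|z|^{-3}),
\end{equation*}
with $I_1(t)=\int_0^t\lambda_s\,ds$, and substitute into $M_t(z)=(\Imm Z_t/|Z_t|)^{8/\kappa}|g_t'(z)|^2$ where $Z_t=g_t(z)-\lambda_t$. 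A direct Taylor expansion of each factor in powers of $|z|^{-1}$, followed by taking $\PP_\kappa$-expectation (using $\EE_\kappa[\lambda_t]=\EE_\kappa[\lambda_t^3]=\EE_\kappa[I_1(t)]=0$), shows that all odd-order corrections vanish and that the $|z|^{-2}$ contributions from the two factors $(\Imm Z_t/|Z_t|)^{8/\kappa}$ and $|g_t'(z)|^2$ are equal and opposite: the former contributes $+4t\cos(2\arg z)/|z|^2$ while the latter contributes $-4t\cos(2\arg z)/|z|^2$, so they cancel. This yields $\EE_\kappa[M_t(z)]=G(z)+O(|z|^{-4})$ as $|z|\to\infty$, hence $G_t(z)=O(|z|^{-4})$, which is more than enough.

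The main obstacle is the bookkeeping of the Taylor expansion and verifying the exact $|z|^{-2}$ cancellation, which requires tracking several products of $\lambda_t$ and $I_1(t)$ and carefully matching terms. The cancellation is special to $\rho=-8$: in (\ref{M-kappa}) the exponent on $D_t=|g_t'(z)|$ is $(\rho/\kappa)(1-\kappa/4+\rho/8)$, which equals exactly $2$ precisely when $\rho=-8$; this is what produces the Jacobian-like factor $|g_t'(z)|^2$ responsible for the cancellation.
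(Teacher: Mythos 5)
Parts (i) and (ii) of your proposal are correct. The scaling identity is exactly the paper's argument. For positivity, you invoke continuity of $(z,t)\mapsto G_t(z)$, which is plausible but requires some care (the stopping time $T_z$ depends on $z$); the paper takes a simpler route via (\ref{limGt}) and monotone convergence, avoiding any continuity claim. Your observation that the exponent $\frac{\rho}{\kappa}(1-\frac{\kappa}{4}+\frac{\rho}{8})$ on $D_t$ equals exactly $2$ precisely at $\rho=-8$ is a nice structural remark, and my own back-of-envelope check confirms that the formal $|z|^{-2}$ coefficient of $\EE_\kappa[M_t(z)]/G(z)$ does vanish at $\rho=-8$ after using $\EE[\lambda_t]=0$, $\EE[\lambda_t^2]=\kappa t$, and writing things in terms of $\cos(2\arg z)$.

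For part (iii), however, there is a genuine gap, and it is not mere bookkeeping. The Taylor expansion $g_t(z)=z+\frac{2t}{z}+\cdots$ with controlled error is valid only when $|z|$ is large compared to the hull $K_t$; but the event that makes $G_1(z)=G(z)\PP^{\kappa,-8}_z[T_z\le 1]$ strictly positive is exactly the event where the process reaches out to $z$ by time $1$, i.e.\ where the hull is of size $\sim|z|$ and the expansion has no validity at all. On that event $M_1(z)=0$ by fiat, which the power series cannot see. So to extract $\EE_\kappa[M_1(z)]=G(z)+O(|z|^{-4})$ you would need to (a) prove uniform error bounds for the Loewner expansion on a good event $\{\diam K_1\le \eps|z|\}$, (b) show the bad-event contribution $\EE_\kappa[M_1(z){\bf 1}_{\diam K_1>\eps|z|}]$ is sufficiently small, and (c) verify that all terms of the expansion are uniformly integrable against $\PP_\kappa$ so that termwise expectation is legitimate. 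Estimate (b) is essentially a tail bound on $\PP[\diam K_1>\eps|z|]$, which is exactly the kind of Gaussian estimate the paper uses directly via the representation (\ref{Tz0}) of $T_{z_0}$ as $y_0^2\int_0^\infty e^{-4s}\cosh^2(V_s)\,ds$, Lemma \ref{local}, and (\ref{Btab}); the paper's route is shorter because it bounds $\PP^{\kappa,-8}_{z_0}[T_{z_0}\le 1]$ from above (and hence $G_1$) without ever expanding the flow, and moreover obtains $G_1\equiv 0$ on $\{\Imm z>2\}$ plus exponential decay in $\Ree z$, which is stronger and cleaner than a polynomial rate. Your approach could in principle be completed, but it re-derives, in disguise, the very tail estimate it was trying to avoid, and the termwise-expectation step remains to be justified.
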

\begin{proof}
 From (\ref{scalingGt}) and that $G_0\equiv 0$, we get $C_{\kappa,t}=tC_{\kappa,1}$, $t\ge 0$. From (\ref{limGt}) and (\ref{scalingGt}), we get $C_{\kappa,1}>0$.
Now we show $C_{\kappa,1}<\infty$. Fix $z_0=x_0+iy_0\in\HH$ and consider an SLE$_\kappa(-8)$ process started from $0$ with force point $z_0$.  We will use some results in Appendix \ref{section-bdd}. We may express $T_{z_0}$ by (\ref{Tz0}), where $V_s$ is a diffusion process that satisfies the SDE (\ref{dL}) with initial value $V_0=\arcsinh(x_0/y_0)$. So we immediately have $T_{z_0}\ge y_0^2\int_0^\infty e^{-4s}ds=y_0^2/4$. Thus, $T_{z_0}>1$ if $y_0>2$. From (\ref{Tz<}) we see that $G_1\equiv 0$ on $\{\Imm z>2\}$. From the left-right symmetry, we see that $G_1(x+iy)=G_1(-x+iy)$. We also know that $|G_1(z)|\le |G(z)|\le 1$ for all $z\in\HH$. Thus, to show that $C_{\kappa,1}=\int_\HH G_1(z) dA(z)<\infty$, it suffices to prove that there is $M>0$ such that $\int_M^\infty \int_0^2 G_1(x+iy) dydx<\infty$.

Suppose $|\sqrt\kappa B_s|\le 1+bs$ for every $s\ge 0$, where $b>0$ is to be determined. From (\ref{dL}) and that $|\tanh(x)|\le 1$, we have
$V_s\ge V_0-1-bs-(\frac\kappa 2+4)s$, $s\ge 0$. Thus,
$$\cosh^2(V_2)\ge \frac 14 e^{2V_s}\ge \frac 14 e^{2V_0}\cdot e^{-(2b+\kappa +8)s-2}\ge e^{-(2b+ \kappa+8)s-2} \sinh^2(V_0)=\frac{x_0^2}{y_0^2} e^{-(2b+ \kappa+8)s-2}.$$
From (\ref{Tz0}) we then get $T_{z_0}\ge \frac{e^{-2}x_0^2}{\kappa+12+2b}$, which then implies $T_{z_0}>1$ if $2b<e^{-2}x_0^2-\kappa-12$. From (\ref{Btab}) we know that $\PP[\sqrt\kappa | B_s|\le  1+bs,\forall s\ge 0]\ge 1-2e^{-\frac 2\kappa  b}$. Thus, by taking $2b=e^{-2}x_0^2-\kappa-12-\eps$ and letting $\eps\to 0^+$, we get
$$\PP^{\kappa,-8}_{z_0}[T_{z_0}\le 1]\le e^{-\frac 1\kappa (e^{-2}x_0^2-\kappa-12)}, \quad \mbox{if }e^{-2}x_0^2>\kappa+12.$$
Let $M=\sqrt{e^2(\kappa+12)}$. From (\ref{Tz<}) we get
$$G_1(z_0)\le G(z_0) \PP^{\kappa,-8}_{z_0}[T_{z_0}\le 1]\le \PP^{\kappa,-8}_{z_0}[T_{z_0}\le 1]\le e^{-\frac 1\kappa (e^{-2}x_0^2-\kappa-12)},\quad x_0>M.$$
So we get $\int_M^\infty\int_0^2  G_1(z_0)dydx<\infty$, as desired.
\end{proof}

Let $\mA $ be the Lebesgue measure on $\R$. For any measurable set $U\subset \lin\HH$, define
$$\Theta^{\kappa,-8}_t(U)=C_{\kappa,1}\mA(\gamma^{-1}(U)\cap [0,t]),\quad t\ge 0.$$
Especially, we have $\Theta^{\kappa,-8}_t:=\Theta^{\kappa,-8}_t(\lin\HH)=C_{\kappa,1} t$. We can now state the following theorem, which is similar to Theorem \ref{Thm1}.

\begin{Theorem} For any measurable set $U\subset \lin \HH$,
\BGE {\PP^{\kappa, -8}_{U}\ha\oplus\PP_\kappa}=\PP_\kappa \otimes d\Theta^{\kappa,-8}_\cdot(U),\label{pre-main-thm2}\EDE
  \BGE \Lo_*(\PP^{\kappa, -8}_z \oplus\PP_\kappa)\overleftarrow\otimes{\bf 1}_U G^{\kappa, -8}(z)dA(z)=\Lo_*(\PP_\kappa)\otimes {\cal M}^{\kappa,-8}_U,\label{int-U2}\EDE
where ${\cal M}^{\kappa,-8}_U$ is the kernel from $\Lo(\Sigma^\Lo)$ to $\lin\HH$ defined by ${\cal M}^{\kappa,-8}_U(\gamma,\cdot)=\gamma_*(d\Theta^{\kappa,-8}_\cdot(U) )$.
\label{Thm2}\end{Theorem}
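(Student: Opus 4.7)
The plan is to follow the blueprint of the proof of Theorem \ref{Thm1}. Equation (\ref{pre-main-thm2}) is the substantive one: once it is established, (\ref{int-U2}) will follow by applying the extended Loewner map $\ha\Lo$ and the definition of ${\cal M}^{\kappa,-8}_U$, in exactly the same way (\ref{int-U}) was derived from (\ref{pre-main-thm}) at the end of the proof of Theorem \ref{Thm1}. So the work is to prove (\ref{pre-main-thm2}), first for pre-compact $U\subset\HH$ (where all integrals are finite) and then for general measurable $U\subset\lin\HH$ by monotone limits.

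By Proposition \ref{Prop-Pkdecmp} applied with $\theta_t=\Theta^{\kappa,-8}_t(U)$, the identity (\ref{pre-main-thm2}) reduces to
$$\PP^{\kappa,-8}_U = \K_{d\Theta^{\kappa,-8}(U)}(\PP_\kappa).$$
By (\ref{dP-kappa-rho-U}) the LHS is locally absolutely continuous w.r.t.\ $\PP_\kappa$ with local Radon-Nikodym derivative $\Psi^{\kappa,-8}_t(U)$, and by (\ref{trunc'}) the RHS is locally absolutely continuous with derivative $\EE_\kappa[\Theta^{\kappa,-8}_\infty(U)|\F^B_t]-\Theta^{\kappa,-8}_t(U)$. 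Hence the identity reduces to the martingale property
$$(\dagger)\qquad N_t:=\Psi^{\kappa,-8}_t(U)+\Theta^{\kappa,-8}_t(U)\text{ is a uniformly integrable }(\F^B_t)\text{-martingale with terminal value }\Theta^{\kappa,-8}_\infty(U).$$

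To prove $(\dagger)$ I would exploit the renewal property of the Loewner equation together with the Markov property of Brownian motion: conditionally on $\F^B_t$, one has $\gamma(t+\cdot)=f_t(\lambda_t+\ha\gamma(\cdot))$ with $\ha\gamma$ an independent standard SLE$_\kappa$. Using the explicit form $M^{\kappa,-8}_t(z)=G^{\kappa,-8}(g_t(z)-\lambda_t)\,|g_t'(z)|^2$ and the conformal change of variable $w=g_t(z)-\lambda_t$, one rewrites $\Psi^{\kappa,-8}_t(U)=\int_{U_t}G^{\kappa,-8}(w)\,dA(w)$, where $U_t:=g_t(U\setminus K_t)-\lambda_t\subset\HH$ is $\F^B_t$-measurable, while the Markov property gives $\Theta^{\kappa,-8}_\infty(U)-\Theta^{\kappa,-8}_t(U)=C_{\kappa,1}\,\mA(\ha\gamma^{-1}(U_t))$. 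Thus $(\dagger)$ is equivalent to the key identity
$$(\star)\qquad C_{\kappa,1}\,\EE_\kappa[\mA(\gamma^{-1}(V))]=\int_V G^{\kappa,-8}(w)\,dA(w)\quad\text{for every Borel }V\subset\HH,$$
with $\gamma$ a standard SLE$_\kappa$. I plan to prove $(\star)$ by a double Fubini argument based on Lemma \ref{LemC}: differentiating $\int_\HH G_s(z)\,dA(z)=sC_{\kappa,1}$ in $s$ yields $\int_\HH G^{\kappa,-8}(z)\,q_z(s)\,dA(z)=C_{\kappa,1}$ for all $s>0$, where $q_z(s)=\frac{d}{ds}\PP^{\kappa,-8}_z[T_z\le s]=-\frac{1}{G^{\kappa,-8}(z)}\frac{d}{ds}\EE_\kappa[M^{\kappa,-8}_s(z)]$ by (\ref{RN-rho}). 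Comparing the space-time measure $G^{\kappa,-8}(z)\,q_z(s)\,ds\,dA(z)$ on $(0,\infty)\times\HH$ with $C_{\kappa,1}\,p_s(z)\,ds\,dA(z)$ (where $p_s$ is the density of $\gamma(s)$ under $\PP_\kappa$) via the local Radon-Nikodym formula (\ref{RN-rho}) then gives the pointwise identity $G^{\kappa,-8}(z)\,q_z(s)=C_{\kappa,1}\,p_s(z)$, and integrating over $V$ in $z$ and over $(0,\infty)$ in $s$ produces $(\star)$.

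The main technical obstacle is the rigorous justification of $(\star)$, in particular the interchange of integrals and the treatment of the non-integrability of $G^{\kappa,-8}$ over all of $\HH$. I would handle this by working first with pre-compact $V\subset\HH$ on which all integrals are finite and then extending by monotone convergence. Once $(\dagger)$, and hence (\ref{pre-main-thm2}), is established for pre-compact $U$, the extension to arbitrary measurable $U\subset\lin\HH$ follows from the $\sigma$-additivity of both sides in $U$, and (\ref{int-U2}) is then deduced by applying $\ha\Lo$ to (\ref{pre-main-thm2}), precisely as in the last paragraph of the proof of Theorem \ref{Thm1}.
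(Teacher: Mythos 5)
Your reduction of (\ref{pre-main-thm2}) to the identity
$$(\dagger)\qquad N_t=\Psi^{\kappa,-8}_t(U)+\Theta^{\kappa,-8}_t(U) \text{ is a u.i.\ martingale with terminal value }\Theta^{\kappa,-8}_\infty(U),$$
and then (via the Markov property and the conformal change of variable $w=g_t(z)-\lambda_t$) to
$$(\star)\qquad C_{\kappa,1}\,\EE_\kappa\bigl[\mA(\gamma^{-1}(V))\bigr]=\int_V G^{\kappa,-8}(w)\,dA(w)\quad\text{for Borel }V\subset\HH,$$
is sound. However, the proposed proof of $(\star)$ contains a genuine gap, and in fact $(\star)$ is what the paper proves \emph{as a consequence} of Theorem \ref{Thm2} (it is Corollary \ref{average}), not an input to it.

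The specific issue is the claimed pointwise identity $G^{\kappa,-8}(z)\,q_z(s)=C_{\kappa,1}\,p_s(z)$, where $p_s$ is the density of $\gamma(s)$ under $\PP_\kappa$. You assert it follows ``via the local Radon--Nikodym formula (\ref{RN-rho}),'' but (\ref{RN-rho}) relates the two driving laws on $\F^0_t\cap\Sigma_t$; it does not yield a density for $\gamma(s)$ at a \emph{fixed} time $s$, and relating that density to the density $q_z(s)$ of the swallowing time $T_z$ under $\PP^{\kappa,-8}_z$ is precisely the disintegration that the theorem establishes. Differentiating Lemma \ref{LemC} in $s$ does give $\int_\HH G^{\kappa,-8}(z)\,q_z(s)\,dA(z)=C_{\kappa,1}$, but that is only the $s$-marginal normalization; it says nothing about how the mass is distributed over $V\subsetneq\HH$, so no double Fubini closes the argument. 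Moreover, the plan to ``work first with pre-compact $V$'' goes in the wrong direction: the whole point of the paper's computation is that, after changing variables by the conformal map $g_t:\HH\sem K_t\to\HH$, the quantity $\int_\HH\bigl(M_t(z)-\EE_\kappa[M_N(z)\mid\F^0_t]\bigr)\,dA(z)$ collapses to the deterministic constant $(N-t)C_{\kappa,1}$ \emph{because the image domain is exactly $\HH$}. For a proper $V\subsetneq\HH$ the image $g_t(V\sem K_t)-\lambda_t$ is a random, path-dependent set, and no such simplification occurs. The paper instead truncates in $N$ via $\PP^{\kappa,-8}_{z;N}$, works over all of $\HH$ at once to identify $\int_\HH\PP^{\kappa,-8}_{z;N}G(z)\,dA(z)=\K_{C_{\kappa,1}\,d(t\wedge N)}(\PP_\kappa)$, lets $N\to\infty$ to get $\PP^{\kappa,-8}_\HH=\K_{C_{\kappa,1}\,dt}(\PP_\kappa)$, applies Proposition \ref{Prop-Pkdecmp} and $\ha\Lo$, and only then restricts the resulting curve--point measure to $\Sigma^\C\times U$. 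You should replace the proof of $(\star)$ by this computation (or reorganize the argument to avoid needing $(\star)$ as a prerequisite); as written, the step is circular. A secondary remark: in the paper the derivation runs $\PP^{\kappa,-8}_\HH=\K_{C_{\kappa,1}dt}(\PP_\kappa)\Rightarrow\text{(\ref{int-U2})}\Rightarrow\text{(\ref{pre-main-thm2})}$, opposite to your (\ref{pre-main-thm2})$\Rightarrow$(\ref{int-U2}); both directions are legitimate since $\ha\Lo$ is a measurable bijection onto its image, but the pre-compact-first monotone-limit scheme you describe for (\ref{pre-main-thm2}) should be replaced by the $\HH$-first, restrict-afterwards order.
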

\begin{proof}
  From the renewal property of Loewner's equation and the Markov property of $\sqrt\kappa B_t$, we see that, if $t\le N$, then
 $M_t(z)-\EE_\kappa[ M_N(z)|\F_t^0]={\bf 1}_{T_z>t} |g_t'(z)|^2 G_{N-t}(g_t(z)-\lambda_t)$.
Thus,
$$\int_\HH (M_t(z)-\EE_\kappa[ M_N(z)|\F_t^0])dA(z)=\int_{\HH\sem K_t} G_{N-t}(g_t(z)-\lambda_t) |g_t'(z)|^2 dA(z)$$
$$=\int_{\HH} G_{N-t}(w) dA(w)=C_{\kappa,N-t}=(N-t)C_{\kappa,1}.$$
Here in the second ``='' we use $w=g_t(z)-\lambda_t$ and the fact that $g_t$ maps $\HH\sem K_t$ conformally onto $\HH$.
The above formula together with Proposition \ref{Prop-fubini} and (\ref{PNRN}) implies that $\int_\HH \PP^{\kappa,-8}_{z;N} G(z) dA(z)\tl \PP_\kappa$, and the local Radon-Nikodym derivative is $C_{\kappa,1} (N-t)\vee 0$.  From Proposition \ref{Prop-trunc} we get
$$\int_\HH \PP^{\kappa,-8}_{z;N} G(z)dA(z)=\K_{C_{\kappa,1} d(t\wedge N)}(\PP_\kappa).$$
Letting $N\to \infty$, we get
\BGE \PP^{\kappa,-8}_\HH=\K_{C_{\kappa,1} dt}(\PP_\kappa)\label{killing-cap}.\EDE
Using Proposition \ref{Prop-Pkdecmp}, we get
$${\PP^{\kappa, -8}_{\HH}\ha\oplus\PP_\kappa}=\PP_\kappa \otimes C_{\kappa,1}\mA|_{[0,\infty)}.$$
Applying the extended Loewner map $\ha \Lo(\lambda,t)=(\Lo(\lambda),\Lo(\lambda)(t))$, we get
\BGE \Lo_*(\PP^{\kappa, -8}_z \oplus\PP_\kappa)\overleftarrow\otimes G(z)dA(z)=\Lo_*(\PP_\kappa)\otimes {\cal M}^{\kappa,-8},\label{int-U2'}\EDE
where ${\cal M}^{\kappa,-8}$ is the kernel from $\Lo(\Sigma^\Lo)$ to $\lin\HH$ defined by ${\cal M}^{\kappa,-8}_U(\gamma,\cdot)=\gamma_*(C_{\kappa,1}\mA|_{[0,\infty)} )$.  Note that ${\cal M}^{\kappa,-8}_U(\gamma,\cdot)$ is the restriction of ${\cal M}^{\kappa,-8} (\gamma,\cdot)$ to $U$. So we get (\ref{int-U2}) by restricting both sides of  (\ref{int-U2'}) to $\Sigma^\C\times U$. Finally, we get  (\ref{pre-main-thm2}) by applying $\ha\Lo^{-1}$.
\end{proof}

The following two corollaries are similar to Corollaries \ref{Cor1} and \ref{Cor2}.

\begin{Corollary}
   Let $U$ be a measurable subset of $\HH$ with $\int_U G^{\kappa,-8}(z)dA(z)<\infty$. If we integrate the laws of extended  SLE$_\kappa(-8)$ curve started from $0$ with force point at $z$ against the measure ${\bf 1}_U G^{\kappa,-8}(z)dA(z)$, then we get a finite measure on curves, which is absolutely continuous w.r.t.\ the law of an SLE$_\kappa$ curve, and the Radon-Nikodym derivative is $|{\cal M}^{\kappa,-8}_U(\gamma,\cdot)|=C_{\kappa,1}\mA(\gamma^{-1}(U))$. \label{Cor3}
\end{Corollary}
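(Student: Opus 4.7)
The plan is to deduce Corollary \ref{Cor3} directly from Theorem \ref{Thm2} by projecting the identity (\ref{int-U2}) onto its first coordinate, the curve space $\Sigma^\C$, exactly as Corollary \ref{Cor1} was deduced from (\ref{int-U}). Concretely, I would push both sides of (\ref{int-U2}) forward under the canonical projection $\Sigma^\C \times \lin\HH \to \Sigma^\C$. The image of the left-hand side is the measure $\int_{U\cap\HH} \Lo_*(\PP^{\kappa,-8}_z \oplus \PP_\kappa)\, G^{\kappa,-8}(z)\, dA(z)$ on curves, which is exactly the integrated law described in the statement.

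For the right-hand side, projecting a measure of the form $\mu \otimes \nu$ with $\nu$ a kernel yields $|\nu(\gamma,\cdot)|\, d\mu(\gamma)$, i.e., $\mu$ biased by the total mass of the kernel. Applied to $\Lo_*(\PP_\kappa) \otimes {\cal M}^{\kappa,-8}_U$, this produces $\Lo_*(\PP_\kappa)$ reweighted by $|{\cal M}^{\kappa,-8}_U(\gamma,\cdot)|$. Using the definition ${\cal M}^{\kappa,-8}_U(\gamma,\cdot) = \gamma_*(d\Theta^{\kappa,-8}_\cdot(U))$ together with $\Theta^{\kappa,-8}_t(U) = C_{\kappa,1}\mA(\gamma^{-1}(U) \cap [0,t])$, the total mass evaluates to $C_{\kappa,1}\mA(\gamma^{-1}(U))$. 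Equating the two projected marginals immediately yields the claimed absolute continuity and identifies the Radon-Nikodym derivative.

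Finiteness of the resulting measure is automatic: the two marginals share the same total mass, and the left-hand side has total mass $\int_U G^{\kappa,-8}(z)\, dA(z)$, finite by hypothesis. I expect no real obstacle; all of the substantive work has already been carried out in establishing Theorem \ref{Thm2} (in particular, the identification (\ref{killing-cap}) and the subsequent application of Proposition \ref{Prop-Pkdecmp}). The corollary is essentially a Fubini-style reading of that identity, in perfect analogy with the derivation of Corollary \ref{Cor1} from (\ref{int-U}).
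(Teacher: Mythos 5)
Your proposal is correct and matches the paper's (implicit) argument: the paper states that Corollary \ref{Cor3} is ``similar to'' Corollary \ref{Cor1}, whose proof is precisely projection of the curve-point identity onto the curve coordinate plus a finiteness check on total mass. Your projection of (\ref{int-U2}) onto $\Sigma^\C$, the identification of the total mass of ${\cal M}^{\kappa,-8}_U(\gamma,\cdot)$ as $C_{\kappa,1}\mA(\gamma^{-1}(U))$ via the definition $\Theta^{\kappa,-8}_t(U)=C_{\kappa,1}\mA(\gamma^{-1}(U)\cap[0,t])$, and the finiteness from $\int_U G^{\kappa,-8}\,dA<\infty$ are exactly the intended steps.
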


\begin{Corollary}
  Suppose $( \gamma, t)$ is a $C([0,\infty),\C)\times [0,\infty)$-valued random variable with the properties that $ \gamma$ has the law of an SLE$_\kappa$ curve, and given $ \gamma$, the law of  $t$ is absolutely continuous w.r.t.\  the Lebesgue measure on $[0,\infty)$. Then the law of $\gamma(t)$ is absolutely continuous w.r.t.\ ${\bf 1}_\HH dA(z)$, and the law of $ \gamma$ conditioned on $z=\gamma(t)$ is absolutely continuous w.r.t.\ the law of an extended  SLE$_\kappa(-8)$ curve started from $0$ with force point at $z$. \label{Cor4}
\end{Corollary}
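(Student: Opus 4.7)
The plan is to imitate the proof of Corollary \ref{Cor2}, replacing the role of (\ref{int-U}) by its capacity-parametrization analogue (\ref{int-U2}) with $U=\lin\HH$. By hypothesis the joint law of $(\gamma,t)$ on $C([0,\infty),\C)\times[0,\infty)$ is absolutely continuous with respect to the $\sigma$-finite product measure $\Lo_*(\PP_\kappa)\otimes\mA|_{[0,\infty)}$. I would push both measures forward under the continuous evaluation map $\Phi(\gamma,t)=(\gamma,\gamma(t))$. Since $\Phi$ is the identity in the first coordinate and pushes $\mA|_{[0,\infty)}$ forward to $\gamma_*(\mA|_{[0,\infty)})$ in the second, the pushed reference measure equals $C_{\kappa,1}^{-1}\Lo_*(\PP_\kappa)\otimes{\cal M}^{\kappa,-8}_{\lin\HH}$, directly from the definition ${\cal M}^{\kappa,-8}_{\lin\HH}(\gamma,\cdot)=\gamma_*(C_{\kappa,1}\mA|_{[0,\infty)})$ in Theorem \ref{Thm2}.

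Next, applying (\ref{int-U2}) with $U=\lin\HH$ identifies this pushforward with $C_{\kappa,1}^{-1}\,\Lo_*(\PP^{\kappa,-8}_z\oplus\PP_\kappa)\overleftarrow\otimes G^{\kappa,-8}(z)dA(z)$. Consequently the law of $(\gamma,\gamma(t))$ is absolutely continuous with respect to this $\sigma$-finite measure on $\Sigma^\C\times\lin\HH$, whose second-factor marginal is proportional to $G^{\kappa,-8}(z)dA(z)$ and whose regular conditional distribution of the first factor given $z$ is $\Lo_*(\PP^{\kappa,-8}_z\oplus\PP_\kappa)$, i.e.\ the law of an extended SLE$_\kappa(-8)$ curve through $z$.

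Finally, I would disintegrate: projecting onto the second coordinate yields that the law of $\gamma(t)$ is absolutely continuous with respect to $G^{\kappa,-8}(z)dA(z)$, which in turn is dominated by ${\bf 1}_\HH dA(z)$ since $G^{\kappa,-8}$ is supported on $\HH$; this gives the first assertion. The standard Radon--Nikodym disintegration on Polish spaces then gives, for almost every $z$ under the law of $\gamma(t)$, absolute continuity of the conditional law of $\gamma$ given $\gamma(t)=z$ with respect to the extended SLE$_\kappa(-8)$ law through $z$. The main point requiring care is this passage from joint to conditional absolute continuity, which is a standard measure-theoretic argument exploiting $\sigma$-finiteness of the reference measure and the Polish structure of $\Sigma^\C$ and $\lin\HH$.
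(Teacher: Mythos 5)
Your proposal is correct and follows essentially the same route the paper intends: the paper gives no explicit proof of Corollary \ref{Cor4}, merely noting it is analogous to Corollary \ref{Cor2}, whose one-line proof observes that the joint law is absolutely continuous w.r.t.\ either side of the relevant decomposition identity. You correctly supply the one extra ingredient peculiar to this case — pushing forward the hypothesis-level domination by $\Lo_*(\PP_\kappa)\otimes\mA|_{[0,\infty)}$ under $(\gamma,t)\mapsto(\gamma,\gamma(t))$ to land on the measure $C_{\kappa,1}^{-1}\Lo_*(\PP_\kappa)\otimes{\cal M}^{\kappa,-8}_{\lin\HH}$, then applying (\ref{int-U2}) and the standard disintegration of an absolutely continuous pair on Polish spaces.
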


\no{\bf Remark.} From Corollary \ref{Cor4} we see that, if we sample a point on an SLE$_\kappa$ curve according to a law that is absolutely continuous w.r.t.\ the capacity parametrization, and stop the SLE$_\kappa$ curve at that point, then the law of the stopped curve conditioned on that point is absolutely continuous w.r.t.\ the law of SLE$_\kappa(-8)$ curve. This extends a result in \cite{tip}, whose argument used the symmetry of backward SLE$_\kappa$ welding for $\kappa\in(0,4]$ (cf.\ \cite{RZ}) and the conformal removability of SLE$_\kappa$ curves for $\kappa\in(0,4)$ (cf.\ \cite{JS,RS}).

    More specifically, from \cite[Remark 2 after Theorem 6.6]{tip}, we know that, if $\kappa\in(0,4)$, then the above conditional stopped curve is the conformal image of an initial segment of a whole-plane SLE$_\kappa(\kappa+2)$ curve, which is also an end segment of a whole-plane SLE$_\kappa(\kappa+2)$ curve, thanks to the reversibility of whole-plane SLE$_\kappa(\rho)$ curve (cf.\ \cite{MS4}). From Proposition \ref{coordinate}, we know that an end segment of a whole-plane SLE$_\kappa(\kappa+2)$ curve can be mapped conformally to an end segment of a chordal SLE$_\kappa(-8)$ curve. Thus, Corollary \ref{Cor4} extends a weaker version of \cite[Remark 2 after Theorem 6.6]{tip} from $\kappa\in(0,4)$ to $\kappa\in(0,\infty)$. The result here is weaker because we can not conclude that the conditional stopped curve is exactly the conformal image of an SLE$_\kappa(-8)$ curve, but can only say that its law is {\it absolutely continuous} w.r.t.\ the law of an SLE$_\kappa(-8)$ curve.

\begin{Corollary}
  Let $\gamma$ be an SLE$_\kappa$ curve. Then for any measurable set $U\subset \lin\HH$, we have  $\EE[\mA(\gamma^{-1}(U))]=\frac 1{C_{\kappa,1}}\int_U G^{\kappa,-8}(z) dA(z)$. Especially, we have a.s.\ $\mA(\gamma^{-1}(\R))=0$. \label{average}
\end{Corollary}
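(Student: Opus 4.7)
The plan is to read off both sides of the main identity (\ref{int-U2}) from Theorem \ref{Thm2} as (possibly infinite) measures on $\Sigma^\C\times \lin\HH$ and simply compare their total masses. Since total mass is preserved under the measurable map $\Lo$ and under the $\overleftarrow\otimes$ operation, the total mass of the left-hand side of (\ref{int-U2}) equals
\[
\int_{U\cap\HH} \bigl|\PP^{\kappa,-8}_z\oplus \PP_\kappa\bigr|\, G^{\kappa,-8}(z)\,dA(z)=\int_U G^{\kappa,-8}(z)\,dA(z),
\]
using that $\PP^{\kappa,-8}_z\oplus\PP_\kappa$ is a probability measure (by Proposition \ref{transience-0}, $\PP^{\kappa,-8}_z$ is a probability on $\Sigma^\oplus$ so the $\oplus$-concatenation with the probability $\PP_\kappa$ has total mass one) and that $G^{\kappa,-8}$ vanishes on $\R$ so that ${\bf 1}_U G^{\kappa,-8}$ is supported in $U\cap\HH$.

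Next I would compute the total mass of the right-hand side of (\ref{int-U2}). By definition ${\cal M}^{\kappa,-8}_U(\gamma,\cdot)=\gamma_*(d\Theta^{\kappa,-8}_\cdot(U))$, so
\[
\bigl|{\cal M}^{\kappa,-8}_U(\gamma,\cdot)\bigr|=\Theta^{\kappa,-8}_\infty(U)=C_{\kappa,1}\,\mA(\gamma^{-1}(U)).
\]
Integrating against $\Lo_*(\PP_\kappa)$ (the law of an SLE$_\kappa$ curve) yields $C_{\kappa,1}\,\EE[\mA(\gamma^{-1}(U))]$. Equating the two sides gives the stated identity
\[
\EE[\mA(\gamma^{-1}(U))]=\frac{1}{C_{\kappa,1}}\int_U G^{\kappa,-8}(z)\,dA(z).
\]

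For the second assertion, I would apply the identity with $U=\R$. The density $G^{\kappa,-8}(z)=(\Imm z/|z|)^{8/\kappa}$ is integrated against the planar Lebesgue measure $dA$, and $\R$ has planar Lebesgue measure zero, so the right-hand side vanishes. Hence $\EE[\mA(\gamma^{-1}(\R))]=0$, which forces $\mA(\gamma^{-1}(\R))=0$ almost surely. No step here looks delicate; essentially the only thing that needs a moment of thought is confirming that all the mass-preservation statements hold in the $\sigma$-finite setting, which is immediate since $\PP^{\kappa,-8}_z\oplus \PP_\kappa$ and $\Lo_*(\PP_\kappa)$ are probability measures and $G^{\kappa,-8}$ may be integrated freely on measurable subsets of $\HH$.
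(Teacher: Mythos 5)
Your argument is correct and is essentially the paper's proof: the paper also reads off the first identity by comparing total masses of the two sides of (\ref{int-U2}), using $|{\cal M}^{\kappa,-8}_U(\gamma,\cdot)|=C_{\kappa,1}\mA(\gamma^{-1}(U))$, and obtains the second assertion by setting $U=\R$ and observing that $\int_\R G^{\kappa,-8}\,dA=0$. You have merely spelled out the mass-preservation steps in more detail.
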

\begin{proof}
  The first statement follows from computing the total mass of the measures in (\ref{int-U2})  and the fact that $|{\cal M}^{\kappa,-8}_U|=C_{\kappa,1}\mA(\gamma^{-1}(U))$. The second statement follows from taking $U=\R$ and the fact that $\int_R G^{\kappa,-8}(z) dA(z)=0$.
\end{proof}

\no{\bf Remark.} The above corollary says that $\frac 1{C_{\kappa,1}} G^{\kappa,-8}(z)$ is the density of SLE$_\kappa$ curve in capacity parametrization, and so may be called the capacity Green's function for SLE$_\kappa$.

\begin{Corollary}
  Let $\gamma$ be an SLE$_\kappa$ curve. Then there is a random conformal map $W$ from $\HH$ into $\ha\C$ such that $W(\gamma(1))=\infty$, and the law of $W(\gamma(t))$, $0\le t\le 1$, is absolutely continuous w.r.t.\ an {\it end} segment of a whole-plane SLE$_\kappa(\kappa+2)$ curve, up to a reparametrization. \label{Cor5}
\end{Corollary}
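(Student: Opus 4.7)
The plan is to combine Corollary~\ref{Cor4} with the conformal coordinate change of Proposition~\ref{coordinate}. First, take $t$ to have an absolutely continuous law on $[0,\infty)$, independent of $\gamma$. Corollary~\ref{Cor4} together with the remark following it implies that, conditional on $z=\gamma(t)$, the stopped curve $\gamma|_{[0,t]}$ is absolutely continuous w.r.t.\ the law $\nu_z$ of a chordal SLE$_\kappa(-8)$ curve from $0$ with force point $z$ (which a.s.\ terminates at $z$ by Proposition~\ref{transience-0}). This is an averaged statement over $t$.

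Second, I would pass from this averaged statement to a pointwise statement at $t=1$ using Brownian scaling. Writing $\mu_{t,z}$ for the conditional law of $\gamma|_{[0,t]}$ given $\gamma(t)=z$, the scaling properties of SLE$_\kappa$ and of SLE$_\kappa(-8)$ imply that $\mu_{t,z}\ll \nu_z$ is equivalent, under the map $\gamma(\cdot)\mapsto\sqrt{t}\,\gamma(\cdot/t)$, to $\mu_{1,z/\sqrt{t}}\ll\nu_{z/\sqrt{t}}$. Disintegrating the averaged statement in $t$ gives $\mu_{t,z}\ll\nu_z$ for Lebesgue-a.e.\ $(t,z)\in(0,\infty)\times\HH$, and the scaling equivalence then upgrades this to $\mu_{1,w}\ll\nu_w$ for Lebesgue-a.e.\ $w\in\HH$. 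Thus, conditional on $z=\gamma(1)$, $\gamma|_{[0,1]}$ is absolutely continuous w.r.t.\ $\nu_z$.

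Third, I would invoke Proposition~\ref{coordinate}, which, in the form used in the remark after Corollary~\ref{Cor4}, identifies via a conformal map an end segment of a chordal SLE$_\kappa(-8)$ curve near its force point with an end segment of a whole-plane SLE$_\kappa(\kappa+2)$ curve near $\infty$. Applying this with force point $z=\gamma(1)$ supplies a random conformal map $W$ depending measurably on $\gamma(1)$, with $W(\gamma(1))=\infty$, such that, conditional on $\gamma(1)=z$, the image $W(\gamma|_{[0,1]})$ is absolutely continuous w.r.t.\ an end segment of a whole-plane SLE$_\kappa(\kappa+2)$ curve, up to reparametrization. Since the law of the target end segment does not depend on $z$, integrating over $\gamma(1)$ preserves the absolute continuity and yields the claim.

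The main obstacle will be the scaling reduction in the second step: one must verify that a Borel-measurable version of the disintegration $z\mapsto\mu_{1,z}$ exists so that the scaling identity $\mu_{t,z}=s_t(\mu_{1,z/\sqrt{t}})$, with $s_t$ the scaling map, makes sense pointwise, and then carefully transfer the ``a.e.\ $(t,z)$'' statement to ``a.e.\ $w$''. A secondary technical point is tracking the compatibility of the capacity parametrization of $\gamma|_{[0,1]}$ with whichever parametrization of the whole-plane curve underlies Proposition~\ref{coordinate}; this is absorbed by the ``up to a reparametrization'' qualifier in the statement.
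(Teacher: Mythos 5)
Your proposal diverges from the paper at the second step, and that step is not just technically delicate but actually fails. You want to pass from the averaged statement (Corollary \ref{Cor4} for a $t$ with absolutely continuous law) to the pointwise statement $\mu_{1,w}\ll\nu_w$ for a.e.\ $w$. But absolute continuity of a mixture does not pass to the mixture components, and here the failure is concrete: for \emph{every} $(t,z)$ one has $\mu_{t,z}\perp\nu_z$, not $\mu_{t,z}\ll\nu_z$. Indeed $\mu_{t,z}$ is supported on curves whose hull has half-plane capacity exactly $2t$, whereas under $\nu_z$ the hcap of the curve (namely $2T_z$) has a continuous distribution; so the pushforwards of $\mu_{t,z}$ and $\nu_z$ under the hcap functional are a Dirac mass and an atomless law, which are mutually singular. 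The averaging over $t$ is precisely what smears out the capacity to match $\nu_z$, so Corollary \ref{Cor4} holds in the averaged form but cannot be disintegrated as you propose. The scaling equivalence you state is correct, but it only propagates singularity to singularity.

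The paper's proof sidesteps this by never conditioning at the deterministic time $1$. Using the domain Markov property, it prepends a random-length initial segment (length $\xi\sim U[0,1]$) of an independent SLE$_\kappa$ curve $\til\gamma$ to $\gamma$, producing a new SLE$_\kappa$ curve $\ha\gamma$ in which $\gamma(1)$ appears as $\ha\gamma(\xi+1)$ with $\xi+1\sim U[1,2]$ independent of $\ha\gamma$. Corollary \ref{Cor4} is then applied to $\ha\gamma$ at this absolutely continuous random time, and $\gamma|_{[0,1]}$ is recovered as the end segment of $\ha\gamma|_{[0,\xi+1]}$ via the random conformal map $W_0=\til f_\xi(\til\lambda_\xi+\cdot)$. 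This extra layer of randomization is essential: the map $W$ in the corollary's conclusion is random precisely because it contains $W_0$, which cannot be dispensed with. Your third step (applying Proposition \ref{coordinate} and then the chordal-to-whole-plane change of coordinates) matches the paper's use of $W_1$ and $W_2$, but it would only kick in after a valid version of step two, so the argument as written does not go through.
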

\begin{proof}
  Let $\til\gamma$ be an SLE$_\kappa$ curve independent of $\gamma$. Let $\til\lambda_t$ and $\til g_t$ be the driving function and Loewner maps associated with $\til\gamma$. Let $\xi$ be a random variable with law ${\bf 1}_{[0,1]}dx$ that is independent of $\til\gamma$ and $\gamma$. Define $\ha\gamma$ such that $\ha\gamma(t)=\til\gamma(t)$ for $0\le t\le \xi$ and $\ha\gamma(t)=\til f_\xi(\til\lambda_\xi+\gamma(t-\xi))$ for $\xi\le t<\infty$, where $\til f_\xi$ is the continuation of $\til g_\xi^{-1}$ to $\lin\HH$. From the domain Markov property of SLE$_\kappa$, we see that $\ha\gamma$ is an SLE$_\kappa$ curve independent of $\xi$. Since $\xi+1$ has the law ${\bf 1}_{[1,2]}dx$, from Corollary \ref{Cor4}, we see that, conditioned on $z_0=\ha\gamma(\xi+1)$, the law of $\ha\gamma_t$, $0\le t\le \xi+1$, is absolutely continuous w.r.t.\ the law of an SLE$_\kappa(-8)$ curve started from $0$ with force point at $z_0$. Let $W_1$ be a conformal map from $\HH$ onto $\D:=\{|z|<1\}$ such that $W_1(0)=1$ and $W_1(z_0)=0$. From Proposition \ref{coordinate}, we know that $W_1$ maps an SLE$_\kappa(-8)$ curve started from $0$ with force point at $z_0$ to a radial SLE$_\kappa(\kappa+2)$ curve in $\D$ started from $1$ with force point at $W_1(\infty)$, up to a reparametrization.

  By \cite{Law1}, there is a conformal map $W_2$ from $\D$ into $\ha\C$ such that $W_2(0)=\infty$, and $W_2$ maps the radial SLE$_\kappa(\kappa+2)$ curve to an end segment of a whole-plane SLE$_\kappa(\kappa+2)$ curve. Let $W_0(z)=\til f_\xi(\til\lambda_\xi+z)$, which is a conformal map from $\HH$ into $\HH$. Then $W:=W_2\circ W_1\circ W_0$ is the conformal map we are looking for.
\end{proof}

\no{\bf Remark.} Corollary \ref{Cor5} extends a weaker version of \cite[Theorem 5.3]{tip}, which states that, for $\kappa\in(0,4)$, there is a random conformal map $W$ from $\HH$ into $\ha\C$ such that $W(\gamma(1))=0$, and  $W(\gamma(t))$, $0\le t\le 1$, is  an {\it initial} segment of a whole-plane SLE$_\kappa(\kappa+2)$ curve, up to a reparametrization.

\begin{Corollary}  If $\int_U G^{\kappa,-8}(z)dA(z)<\infty$, then $M^{\kappa,-8}_t(U):=\Psi^{\kappa,-8}_t(U)+\Theta^{\kappa,-8}_t(U)$ is a uniformly integrable $(\F^0_t)$-martingale.
\end{Corollary}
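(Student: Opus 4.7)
The plan is to recognize $M^{\kappa,-8}_t(U)$ as a closed conditional expectation with respect to $\PP_\kappa$. Set $\theta_t := \Theta^{\kappa,-8}_t(U)$. By construction $\theta$ is continuous, increasing, adapted, and vanishes at $0$; by Corollary \ref{average} together with the hypothesis $\int_U G^{\kappa,-8}(z)\,dA(z) < \infty$ one has $\EE_\kappa[\theta_\infty] = \int_U G^{\kappa,-8}(z)\,dA(z) < \infty$. Thus $(\theta_t)$ satisfies the hypotheses of the remark following Proposition \ref{Prop-trunc}, and (\ref{trunc'}) gives
$$\frac{d\K_{d\theta}(\PP_\kappa)|_{\F_t^0 \cap \Sigma_t}}{d\PP_\kappa|_{\F_t^0 \cap \Sigma_t}} = \EE_\kappa[\theta_\infty \mid \F_t^{B}] - \theta_t, \qquad 0 \le t < \infty.$$

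The next step is to identify $\PP^{\kappa,-8}_U$ with $\K_{d\Theta^{\kappa,-8}(U)}(\PP_\kappa)$. I would do this by pushing forward both sides of (\ref{pre-main-thm2}) under the measurable map $\pi\colon (h,t)\mapsto \K_t(h)$. On the left, since $\PP_\kappa$ is supported on $\Sigma_\infty$ and $\K_{T_f}(f\oplus g) = f$, the pushforward of $\PP^{\kappa,-8}_U \ha\oplus \PP_\kappa$ under $\pi$ is precisely $\PP^{\kappa,-8}_U$. On the right, the pushforward of $\PP_\kappa \otimes d\Theta^{\kappa,-8}(U)$ is $\K_{d\Theta^{\kappa,-8}(U)}(\PP_\kappa)$ by the very definition of this killing measure. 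Consequently
$$\PP^{\kappa,-8}_U = \K_{d\Theta^{\kappa,-8}(U)}(\PP_\kappa),$$
and the local Radon-Nikodym derivative of the left side with respect to $\PP_\kappa$ is given by (\ref{dP-kappa-rho-U}) as $\Psi^{\kappa,-8}_t(U)$.

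Equating the two expressions for the same local Radon-Nikodym derivative now yields
$$\Psi^{\kappa,-8}_t(U) = \EE_\kappa[\Theta^{\kappa,-8}_\infty(U)\mid \F_t^{B}] - \Theta^{\kappa,-8}_t(U), \qquad 0 \le t < \infty,$$
which rearranges to $M^{\kappa,-8}_t(U) = \EE_\kappa[\Theta^{\kappa,-8}_\infty(U)\mid \F_t^{B}]$. This is manifestly a uniformly integrable $(\F_t^{B})$-martingale, and since $M^{\kappa,-8}_t(U)$ is itself $\F_t^0$-measurable, the tower property immediately upgrades this to a uniformly integrable $(\F_t^0)$-martingale. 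The only delicate point is the measure-theoretic identification $\PP^{\kappa,-8}_U = \K_{d\Theta^{\kappa,-8}(U)}(\PP_\kappa)$; everything else is a direct appeal to (\ref{pre-main-thm2}), (\ref{dP-kappa-rho-U}), (\ref{trunc'}), and Corollary \ref{average}, all established earlier.
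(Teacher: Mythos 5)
Your proof is correct, but it follows a genuinely different route from the paper's own argument, which I describe for comparison.

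The paper's proof establishes the representation $M^{\kappa,-8}_t(U)=\EE_\kappa\bigl[\,C_{\kappa,1}\mA(\gamma^{-1}(U))\mid\F^0_t\,\bigr]$ directly: it splits $C_{\kappa,1}\mA(\gamma^{-1}(U))$ into the $\F^0_{t_0}$-measurable ``past'' part $\Theta^{\kappa,-8}_{t_0}(U)$ and the ``future'' part $C_{\kappa,1}\mA(\gamma^{-1}(U)\cap[t_0,\infty])$, then computes $\EE_\kappa[\,\cdot\,\mid\F^0_{t_0}]$ of the future part explicitly via the domain Markov property of SLE$_\kappa$, a change of variables under $g_{t_0}$, and Corollary \ref{average}, identifying the result with $\Psi^{\kappa,-8}_{t_0}(U)$.

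Your argument instead extracts the identity $\PP^{\kappa,-8}_U = \K_{d\Theta^{\kappa,-8}(U)}(\PP_\kappa)$ by pushing forward both sides of (\ref{pre-main-thm2}) under the killing map $(h,t)\mapsto\K_t(h)$, and then equates the two known expressions for the local Radon--Nikodym derivative of this common measure against $\PP_\kappa$: $\Psi^{\kappa,-8}_t(U)$ from (\ref{dP-kappa-rho-U}), and $\EE_\kappa[\theta_\infty\mid\F^B_t]-\theta_t$ from (\ref{trunc'}) (the hypotheses of which are supplied by Corollary \ref{average}). Rearranging gives $M^{\kappa,-8}_t(U)=\EE_\kappa[\Theta^{\kappa,-8}_\infty(U)\mid\F^B_t]$, and the $\F^0_t$-measurability of $M^{\kappa,-8}_t(U)$ upgrades this to an $(\F^0_t)$-martingale.

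In effect you are unwinding the proof of Theorem \ref{Thm2}, which proceeded from the killing identity $\K_{C_{\kappa,1}dt}(\PP_\kappa)=\PP^{\kappa,-8}_\HH$ to (\ref{pre-main-thm2}); your pushforward recovers the $U$-localized killing identity and then reads off the local Radon--Nikodym derivatives from the abstract framework of Section \ref{section-lifetime}. This buys you an argument that avoids any fresh SLE-specific computation (no domain Markov property, no explicit change of variables) and exhibits the corollary as a formal consequence of Theorem \ref{Thm2} plus Propositions \ref{Prop-trunc} and \ref{Prop-fubini}. The paper's proof is shorter and more concrete, hands you the representation over $\F^0_t$ directly without the tower-property detour, and makes the probabilistic meaning (Doob martingale of the total occupation time) transparent. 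The only detail worth flagging in your write-up is that, as usual with local Radon--Nikodym derivatives, the equalities hold $\PP_\kappa$-a.s.\ for each fixed $t$ with an exceptional null set depending on $t$; continuity of both sides in $t$ lets you pass to a common null set, but you should say so if you want the statement as a process identity.
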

\begin{proof} It suffices to show that $M^{\kappa,-8}_t(U)=\EE_\kappa[C_{\kappa,1}\mA(\gamma^{-1}(U))|\F^0_t]$ for every $t\ge 0$. We have $ C_{\kappa,1}\mA(\gamma^{-1}(U))= C_{\kappa,1}\mA(\gamma^{-1}(U)\cap [0,t])+C_{\kappa,1}\mA(\gamma^{-1}(U)\cap [t,\infty])$, and $C_{\kappa,1}\mA(\gamma^{-1}(U)\cap [0,t])=\Theta^{\kappa,-8}_t(U)$ is $\F^0_t$-measurable. Thus, it remains to show that
$$\EE_\kappa [C_{\kappa,1}\mA(\gamma^{-1}(U)\cap [t_0,\infty])|\F^0_{t_0}]=\Psi^{\kappa,-8}_{t_0}(U),\quad t_0\ge 0.$$
Fix $t_0\ge 0$. By the domain Markov Property of SLE$_\kappa$, there is an SLE$_\kappa$ curve $\ha\gamma$ independent of $\F^0_{t_0}$ such that $\gamma(t_0+t)= f_{t_0}(\lambda_{t_0}+\ha\gamma(t))$ for all $t\ge 0$, where $f_{t_0}$ is the continuation of $g_{t_0}^{-1}$ to $\lin\HH$, and $\lambda_t$ and $g_t$ are the driving function and Loewner maps associated with $\gamma$. Let $\til g_{t_0}(z)=g_{t_0}(z)-\lambda_{t_0}$ and $\til f_{t_0}(z)=f_{t_0}(\lambda_{t_0}+z)$. Conditioned on $\F^0_{t_0}$, we have
$$\mA(\gamma^{-1}(U)\cap [t_0,\infty])=\mA(\ha\gamma^{-1}(\til f_{t_0}^{-1}(U)))=\mA(\ha\gamma^{-1}(\til f_{t_0}^{-1}(U)\cap\HH))$$
$$=\mA(\ha\gamma^{-1}(\til g_{t_0}(U\cap(\HH\sem K_{t_0}))) $$
From Corollary \ref{average}, we get
$$\EE_\kappa[ \mA(\gamma^{-1}(U)\cap [t_0,\infty])|\F^0_{t_0}]=\int_{\til g_{t_0}(U\cap(\HH\sem K_{t_0}))} G^{\kappa,-8}(z)dA(z)$$
$$=\int_{U\cap(\HH\sem K_{t_0})} G^{\kappa,-8}( g_{t_0}(z)-\lambda_{t_0}) |g_{t_0}'(z)|^2dA(z) =\int_{U} M^{\kappa,-8}_{t_0}(z)dA(z)=\Psi^{\kappa,-8}_{t_0}(U).$$
This finishes the proof.
\end{proof}

\section{Intersection of SLE with the boundary} \label{section-boundary}
In this section, we decompose SLE$_\kappa$ into  SLE$_\kappa(\rho)$ processes with the force point lying on $\R$.   An SLE$_\kappa(\rho)$ process started from $a_0\in\R$ with force point at $x_0\in\R\sem\{a_0\}$ is the solution of the Loewner equation driven by $\lambda_t$, $0\le t<T_{x_0}$, which is the solution of the SDE:
$$ d\lambda_t=\sqrt\kappa dB_t+ \frac{\rho}{\lambda_t-g^{\lambda }_t(x_0)},\quad \lambda_0=a_0. $$
The Loewner curve driven by $\lambda$, which a.s.\ exists, is called an SLE$_\kappa(\rho)$ curve started from $a_0$ with force point at $x_0$.

Let $\PP^{\kappa,\rho}_{x_0}$ denote the law of the driving SLE$_\kappa(\rho)$ process started from $0$ with force point at $x_0\in\R\sem\{0\}$. From Proposition \ref{Prop-Girs}, we know that $\PP^{\kappa,\rho}_{x_0}\tl\PP_\kappa$. We now derive the local Radon-Nikodym derivative. Let $\lambda_t=\sqrt\kappa B_t$ and $g_t$ be the Loewner maps driven by $\lambda$.
Define
$$ M^{\kappa,\rho}_t(x_0)=|g_t(x_0)-\lambda_t|^{\frac\rho\kappa}\cdot g_t'(x_0)^{\frac\rho\kappa(1-\frac \kappa 4+\frac\rho 4)},\quad 0\le t<T_{x_0},$$ 
and $$ G^{\kappa,\rho}(x_0)=M^{\kappa,\rho}_0(x_0)=|x_0|^{\frac\rho\kappa} ,\quad z\in\HH.$$ 
Direct calculation using It\^o's formula shows that $(M^{\kappa,\rho}_t(x_0))$ is an $(\F^0_t)$-adapted continuous local martingale, and satisfies the SDE:
$$ dM^{\kappa,\rho}_t(x_0)=M^{\kappa,\rho}_t(x_0)\cdot   \frac{\rho/\sqrt\kappa}{\lambda_t-g_t(x_0)}\,dB_t,\quad 0\le t<T_{x_0}.$$ 
We further define $M^{\kappa,\rho}_t(x_0)=0$ for $t\in[ T_{x_0},\infty)$. From Proposition \ref{Prop-Girs} we know that $$ \frac{d\PP^{\kappa,\rho}_{x_0}|_{\F^0_t\cap\Sigma_t}}{d\PP_\kappa|_{\F^0_t\cap\Sigma_t}}= \frac{M^{\kappa,\rho}_t(x_0)}{G^{\kappa,\rho} (x_0)},\quad 0\le t<\infty.$$ 
For a measurable subset $U$ of $\R$, define
$$ \Psi^{\kappa,\rho}_t(U)=\int_{U }  M^{\kappa,\rho}_t(x)dx,\quad \PP^{\kappa,\rho}_U=\int_U  \PP^{\kappa,\rho}_x G^{\kappa,\rho} (x) dx.$$ 
For each $x\in\R\sem\{0\}$, $(M^{\kappa,\rho}_t(x))$ is a supermartingale because it is a positive local martingale. Thus, if $\int_U G^{\kappa,\rho}(x)dx<\infty$, then $(\Psi^{\kappa,\rho}_t(U))$ is also a supermartingale, which has to be $\PP_\kappa$-a.s.\ finite.
From Proposition \ref{Prop-fubini}, we have
$$ \frac{d\PP^{\kappa,\rho}_{U}|_{\F^0_t\cap\Sigma_t}}{d\PP_\kappa|_{\F^0_t\cap\Sigma_t}}= \Psi^{\kappa,\rho}_t(U),\quad 0\le t<\infty. $$ 

It is known that (cf.\ \cite{duality}), if $\rho\le \frac\kappa 2-4$, then for an SLE$_\kappa(\rho)$ curve $\gamma$ with force point $x_0$, we have a.s.\ $\lim_{t\to T_{x_0}^-} \gamma(t)=x_0$, which then implies that $T_{x_0}<\infty$, and $\lim_{t\to T_{x_0}^-}\lambda_t$ converges, where $\lambda$ is the corresponding driving function. This means that $\PP^{\kappa,\rho}_{x_0}$ is supported by $\Sigma^\oplus$, and we may define $\PP^{\kappa,\rho}_{x_0}\oplus \PP_\kappa$. The pushforward measure $\Lo_*(\PP^{\kappa,\rho}_{x_0}\oplus \PP_\kappa)$ is called the law of an extended SLE$_\kappa(\rho)$ curve started from $0$ with force point at $x_0$.

For $\kappa\in(0,8)$ and $\rho=\kappa-8$, an extended SLE$_\kappa(\rho)$ curve with force point at $x_0$ is also called a two-sided chordal SLE$_\kappa$ curve through $x_0$. It can be understood as an SLE$_\kappa$ curve conditioned to pass though $x_0$. To make this rigorous, one may condition an SLE$_\kappa$ curve $\gamma$ on the event that $\dist(x_0,\gamma)<r$, and then pass the limit $r\to 0$. A two-sided chordal SLE$_\kappa$ curve also satisfies reversibility, which is similar to that of a two-sided radial SLE$_\kappa$ curve.

It is proved in \cite{AlbertsSheffield} that, for $\kappa\in(4,8)$, if $U$ is a pre-compact measurable subset of $\R\sem\{0\}$, then $(\Psi^{\kappa,\kappa-8}_t(U))$ is of class $\cal D$, and Doob-Meyer decomposition theorem can be applied to get a unique continuous increasing function $\Theta_t(U)$ such that $\Theta_0(U)=0$ and $\Psi_t(U)+\Theta_t(U)$ is a uniformly integrable martingale. We may then define $\Theta_t(U)$ for any measurable subset $U$ of $\R$ using a limiting procedure. It is conjectured (cf.\ \cite{AlbertsSheffield} and \cite{Law4}) that $\Theta_t(U)$ agrees up to a multiplicative constant with the $d'$-dimensional Minkowski content of $\gamma([0,t])\cap U$, where $d':=2-\frac 8\kappa$ is the Hausdorff dimension of $\gamma\cap\R$ (cf.\ \cite{AlbertsSheffield-H}). Using the argument in Section \ref{section-NP}, we can obtain the following theorem and corollaries.

\begin{Theorem} Let $\kappa\in(4,8)$. Let $U$ be any measurable subset of $\R$. Then we have
$${\PP^{\kappa,\kappa-8}_U\ha\oplus\PP_\kappa}=\PP_\kappa\otimes d\Theta(U),$$ 
 $$ \Lo_*(\PP^{\kappa,\kappa-8}_x \oplus\PP_\kappa)\overleftarrow\otimes{\bf 1}_U G^{\kappa,\kappa-8}(x)dx=\Lo_*(\PP_\kappa)\otimes {\cal M}_U,$$ 
  where ${\cal M}_U$ is the $\Lo_*(\PP_\kappa)$-kernel from $\Lo(\Sigma^\Lo)$ to $\lin\HH$ defined by ${\cal M}_U(\gamma,\cdot)=\gamma_*(d\Theta(U))$. 
\label{Thm3}\end{Theorem}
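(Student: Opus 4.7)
The plan is to run the argument of Theorem \ref{Thm1} verbatim in the boundary setting, since all the ingredients now have boundary analogues. First I would fix a pre-compact measurable $U\subset\R\sem\{0\}$. By the Alberts--Sheffield result quoted just before the statement, $(\Psi^{\kappa,\kappa-8}_t(U))$ is of class $\cal D$, so Doob--Meyer produces a continuous, increasing, $(\F^B_t)$-adapted process $\Theta_t(U)$ with $\Theta_0(U)=0$ such that $M_t(U):=\Psi^{\kappa,\kappa-8}_t(U)+\Theta_t(U)$ is a uniformly integrable $(\F^B_t)$-martingale. In particular $M_\infty(U)=\lim_{t\to\infty}M_t(U)$ exists in $L^1$.

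The next step is to identify $\Psi^{\kappa,\kappa-8}_\infty(U)$. Since $\rho=\kappa-8\le \frac\kappa 2-4$, each force point $x\in U$ satisfies $T_x<\infty$ a.s.\ (the swallowing time of $x$), and by definition $M^{\kappa,\kappa-8}_t(x)=0$ for $t\ge T_x$; then Fubini gives $\Psi^{\kappa,\kappa-8}_\infty(U)=0$ almost surely, the exact analogue of Lemma~\ref{Psi-infty} used in Section~\ref{section-NP}. Therefore
\begin{equation*}
\EE_\kappa[\Theta_\infty(U)\mid \F^B_t]-\Theta_t(U)=\Psi^{\kappa,\kappa-8}_t(U),\quad 0\le t<\infty.
\end{equation*}
Applying Proposition \ref{Prop-trunc} (in the form of \eqref{trunc'}) to the $\PP_\kappa$-kernel $d\Theta(U)$ identifies the local Radon--Nikodym derivative of $\K_{d\Theta(U)}(\PP_\kappa)$ with respect to $\PP_\kappa$ as $\Psi^{\kappa,\kappa-8}_t(U)$. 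Combining this with the formula $d\PP^{\kappa,\kappa-8}_U|_{\F^0_t\cap\Sigma_t}/d\PP_\kappa|_{\F^0_t\cap\Sigma_t}=\Psi^{\kappa,\kappa-8}_t(U)$ derived in Section~\ref{section-boundary}, and invoking Dynkin's $\pi$-$\lambda$ theorem, I obtain $\PP^{\kappa,\kappa-8}_U=\K_{d\Theta(U)}(\PP_\kappa)$ on $(\Sigma,\F^0)$.

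I would then feed this into Proposition \ref{Prop-Pkdecmp} to get
\begin{equation*}
\PP^{\kappa,\kappa-8}_U\ha\oplus\PP_\kappa
=\K_{d\Theta(U)}(\PP_\kappa)\ha\oplus\PP_\kappa
=\PP_\kappa\otimes d\Theta(U),
\end{equation*}
which is the first identity in the statement for pre-compact $U\subset\R\sem\{0\}$. Extension to general measurable $U\subset\R$ follows from monotone approximation by pre-compact sets as in the definition of $\Theta_t(U)$ in Section~\ref{section-NP}. For the second identity I would apply the extended Loewner map $\ha\Lo(\lambda,t)=(\Lo(\lambda),\Lo(\lambda)(t))$. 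As in the proof of Theorem~\ref{Thm1}, $\ha\Lo_*(\PP^{\kappa,\kappa-8}_x\ha\oplus\PP_\kappa)=\Lo_*(\PP^{\kappa,\kappa-8}_x\oplus\PP_\kappa)\otimes\delta_x$, so integrating against $\mathbf 1_U G^{\kappa,\kappa-8}(x)\,dx$ yields
\begin{equation*}
\ha\Lo_*(\PP^{\kappa,\kappa-8}_U\ha\oplus\PP_\kappa)
=\Lo_*(\PP^{\kappa,\kappa-8}_x\oplus\PP_\kappa)\overleftarrow\otimes\mathbf 1_U G^{\kappa,\kappa-8}(x)\,dx,
\end{equation*}
while $\ha\Lo_*(\PP_\kappa\otimes d\Theta(U))=\Lo_*(\PP_\kappa)\otimes{\cal M}_U$ by the definition ${\cal M}_U(\gamma,\cdot)=\gamma_*(d\Theta(U))$. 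Equating the two pushforwards gives the second identity.

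The one place where I expect to spend real effort is the boundary version of Lemma~\ref{Psi-infty}, i.e.\ justifying $\Psi^{\kappa,\kappa-8}_\infty(U)=0$ rigorously (ideally via dominated convergence using the supermartingale bound $\EE_\kappa[\Psi^{\kappa,\kappa-8}_t(U)]\le\int_U G^{\kappa,\kappa-8}(x)\,dx<\infty$ together with $M^{\kappa,\kappa-8}_t(x)\to 0$ pointwise a.e.\ as $t\to\infty$). Everything else is a purely formal transcription of the interior-point proof in Section~\ref{section-NP}, because Propositions \ref{Prop-trunc}, \ref{Prop-Pkdecmp} and \ref{Prop-fubini} and the Radon--Nikodym identity for $\PP^{\kappa,\kappa-8}_U$ were all set up in Section~\ref{section-boundary} without any assumption on where the force point lives.
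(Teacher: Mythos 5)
Your proposal follows the same route the paper intends: the sentence preceding Theorem~\ref{Thm3} says precisely that the argument of Section~\ref{section-NP} is to be transcribed, and you transcribe it correctly—Alberts--Sheffield supplies the class $\mathcal{D}$ input for pre-compact $U\subset\R\sem\{0\}$, the Doob--Meyer decomposition produces $\Theta_t(U)$, Propositions~\ref{Prop-trunc} and \ref{Prop-Pkdecmp} combined with the local Radon--Nikodym identity $d\PP^{\kappa,\kappa-8}_U/d\PP_\kappa=\Psi^{\kappa,\kappa-8}_t(U)$ give $\PP^{\kappa,\kappa-8}_U=\K_{d\Theta(U)}(\PP_\kappa)$ and hence the first identity, and pushing forward under $\ha\Lo$ gives the second. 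The monotone extension to general measurable $U\subset\R$ is also right.

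The one detail to tighten is exactly the spot you flag, but your sketch of it is slightly off. ``By definition $M^{\kappa,\kappa-8}_t(x)=0$ for $t\ge T_x$; then Fubini gives $\Psi^{\kappa,\kappa-8}_\infty(U)=0$'' is not a valid deduction, and neither is dominated convergence applied to $\Psi_t(U)$ directly, because there is no obvious dominating function for $M^{\kappa,\kappa-8}_t(x)$ jointly in $(x,\omega)$. The argument that the paper actually runs in Lemma~\ref{Psi-infty}, and that you should mirror here, is in two separate interchanges. First, by the boundary version of \eqref{RN-rho}, $\EE_\kappa[M^{\kappa,\kappa-8}_t(x)]=G^{\kappa,\kappa-8}(x)\,\PP^{\kappa,\kappa-8}_x[T_x>t]$; since $T_x<\infty$ a.s.\ under $\PP^{\kappa,\kappa-8}_x$ (cited from \cite{duality} for $\rho\le\frac\kappa2-4$ with a boundary force point), dominated convergence in $x$ with dominating function $G^{\kappa,\kappa-8}\mathbf 1_U\in L^1$ gives $\EE_\kappa[\Psi^{\kappa,\kappa-8}_t(U)]\to 0$. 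Second, $(\Psi^{\kappa,\kappa-8}_t(U))$ is a nonnegative supermartingale, so $\Psi^{\kappa,\kappa-8}_\infty(U):=\lim_t\Psi^{\kappa,\kappa-8}_t(U)$ exists a.s.\ by Doob's convergence theorem, and Fatou's lemma in $\omega$ then yields $\EE_\kappa[\Psi^{\kappa,\kappa-8}_\infty(U)]\le\liminf_t\EE_\kappa[\Psi^{\kappa,\kappa-8}_t(U)]=0$. With this correction the proof is complete and matches the paper's intended argument.
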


\begin{Corollary}
  Let $\kappa\in(4,8)$. Let $U$ be a measurable subset of $\R$ with $\int_U G^{\kappa,\kappa-8}(x)dx<\infty$. If we integrate the laws of two-sided chordal SLE$_\kappa$ through $x$ against the measure ${\bf 1}_U G^{\kappa,\kappa-8}(x)dx$, then we get a finite measure on curves, which is absolutely continuous w.r.t.\ the law of an SLE$_\kappa$ curve, and the Radon-Nikodym derivative is $|{\cal M}_U(\gamma,\cdot)|=\Theta_\infty(U)(\Lo^{-1}(\gamma))$. \label{Cor8}
\end{Corollary}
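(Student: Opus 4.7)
The plan is to derive Corollary~\ref{Cor8} directly from the second identity of Theorem~\ref{Thm3}, paralleling the derivation of Corollary~\ref{Cor1} from Theorem~\ref{Thm1}. Specifically, I would start from
\[ \Lo_*(\PP^{\kappa,\kappa-8}_x \oplus\PP_\kappa)\overleftarrow\otimes{\bf 1}_U G^{\kappa,\kappa-8}(x)\,dx=\Lo_*(\PP_\kappa)\otimes {\cal M}_U \]
as an equality of $\sigma$-finite measures on $\Sigma^\C\times \lin\HH$ (with $U\subset\R$ now sitting on the boundary factor), and project both sides onto the first coordinate $\Sigma^\C$, i.e., take the marginal on the curve component. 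By the definition of $\overleftarrow{\otimes}$ and Fubini, the left marginal is exactly the measure $\int_U \Lo_*(\PP^{\kappa,\kappa-8}_x \oplus\PP_\kappa)\,G^{\kappa,\kappa-8}(x)\,dx$, which by construction is the integral of the laws of two-sided chordal SLE$_\kappa$ curves through $x$ against ${\bf 1}_U G^{\kappa,\kappa-8}(x)\,dx$. By the definition ${\cal M}_U(\gamma,\cdot)=\gamma_*(d\Theta(U))$, the right marginal is $\Lo_*(\PP_\kappa)$ biased by the total mass $|{\cal M}_U(\gamma,\cdot)|=\Theta_\infty(U)(\Lo^{-1}(\gamma))$, which is precisely the Radon-Nikodym derivative claimed in the statement.

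To conclude, I would verify finiteness of the total mass. Since $\Psi^{\kappa,\kappa-8}_t(U)+\Theta_t(U)$ is a uniformly integrable $(\F^B_t)$-martingale---this being the Doob-Meyer statement of \cite{AlbertsSheffield} invoked in the setup of Theorem~\ref{Thm3}---taking expectations yields $\EE_\kappa[\Theta_\infty(U)]=\Psi^{\kappa,\kappa-8}_0(U)=\int_U G^{\kappa,\kappa-8}(x)\,dx<\infty$ by hypothesis (one uses here that the boundary analogue of Lemma~\ref{Psi-infty} gives $\Psi^{\kappa,\kappa-8}_\infty(U)=0$, since the chordal SLE$_\kappa$ curve eventually escapes any pre-compact subset of $\R$). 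Therefore both projected measures are finite with the same total mass, and the projected identity is exactly the assertion of the corollary.

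The argument contains no genuine obstacle beyond invoking Theorem~\ref{Thm3}: all the heavy lifting (the Doob-Meyer decomposition of Alberts-Sheffield, the uniform integrability, and the product-measure identity) has been absorbed into the theorem, and the projection step is purely formal. The only point worth double-checking is that the extension of $\Theta_t(U)$ from pre-compact subsets of $\R\setminus\{0\}$ to an arbitrary measurable $U\subset\R$ satisfying the integrability hypothesis goes through via the same increasing-limit construction used in Section~\ref{section-NP} for subsets of $\lin\HH$; this is routine because $d\Theta(U)$ remains additive and monotone in $U$, and the Green's function integrability controls total mass in the limit.
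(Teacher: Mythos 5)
Your proof is correct and follows exactly the route the paper intends: project the second identity of Theorem~\ref{Thm3} onto the curve factor $\Sigma^\C$ and use $\EE_\kappa[\Theta_\infty(U)]=\int_U G^{\kappa,\kappa-8}(x)\,dx$ (via the uniformly integrable Doob--Meyer martingale of \cite{AlbertsSheffield} and $\Psi^{\kappa,\kappa-8}_\infty(U)=0$) to verify the measure is finite --- precisely the argument used for Corollary~\ref{Cor1}, which the paper states transfers verbatim to this boundary setting. The only caveats you flag --- the boundary analogue of Lemma~\ref{Psi-infty} and the increasing-limit extension of $\Theta_t(U)$ beyond pre-compact $U$ --- are exactly the points the paper leaves implicit, and your justifications for them are sound.
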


\begin{Corollary}
  Let $\kappa\in(4,8)$. Suppose $( \gamma, x)$ is a $C([0,\infty),\C)\times\R$-valued random variable with the properties that $ \gamma$ has the law of an  SLE$_\kappa$ curve, and given $ \gamma$, the law of  $x$ is absolutely continuous w.r.t.\  ${\cal M}_\R(\gamma,\cdot)$. Then the law of $x$ is absolutely continuous w.r.t.\ $\mA$, and the law of $ \gamma$ given $x$ is absolutely continuous w.r.t.\ the law of a two-sided chordal SLE$_\kappa$ curve through $x$. \label{Cor9}
\end{Corollary}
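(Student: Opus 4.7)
The plan is to imitate the proof of Corollary \ref{Cor2} using Theorem \ref{Thm3} in place of Theorem \ref{Thm1}, with the target set now being $\R$ rather than $\HH$. Taking $U=\R$ in Theorem \ref{Thm3} gives the identity
\begin{equation*}
\Lo_*(\PP^{\kappa,\kappa-8}_x\oplus\PP_\kappa)\overleftarrow\otimes G^{\kappa,\kappa-8}(x)\,dx \;=\; \Lo_*(\PP_\kappa)\otimes{\cal M}_\R
\end{equation*}
of $\sigma$-finite measures on $\Sigma^\C\times\R$. The right-hand side disintegrates as: first sample $\gamma$ from the SLE$_\kappa$ law and then sample $x$ from ${\cal M}_\R(\gamma,\cdot)$. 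The left-hand side disintegrates the other way: first sample $x$ from $G^{\kappa,\kappa-8}(x)\,dx$ and then sample $\gamma$ from the law of a two-sided chordal SLE$_\kappa$ curve through $x$. The whole proof is a matter of reading the two sides and matching them against the hypothesis.

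The assumption is that the joint law of $(\gamma,x)$ has $\gamma$-marginal equal to the SLE$_\kappa$ law and regular conditional law of $x$ given $\gamma$ absolutely continuous w.r.t.\ ${\cal M}_\R(\gamma,\cdot)$. My first step would be to conclude from this that the joint law is absolutely continuous with respect to the right-hand side of the identity, and therefore also with respect to the left-hand side. Next, from the left-hand disintegration I would read off both conclusions of the corollary: the $x$-marginal is absolutely continuous w.r.t.\ $G^{\kappa,\kappa-8}(x)\,dx$, and since $G^{\kappa,\kappa-8}(x)=|x|^{(\kappa-8)/\kappa}>0$ for every $x\ne 0$, this forces absolute continuity w.r.t.\ Lebesgue measure $\mA$ on $\R$; and for $\mA$-a.e.\ $x$ the regular conditional law of $\gamma$ given $x$ is absolutely continuous w.r.t.\ the law of a two-sided chordal SLE$_\kappa$ curve through $x$.

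The only delicate point is the measure-theoretic lifting: absolute continuity of a joint measure does not automatically pass to the disintegration on both coordinates unless the relevant measures are $\sigma$-finite and the conditional measures are chosen compatibly. This is where I would need to be most careful, but no real obstacle is expected, since $\sigma$-finiteness and the behaviour of kernels under locally absolute continuity have already been worked out in Section \ref{section-lifetime} and applied to Theorem \ref{Thm3} in its proof. Beyond this routine bookkeeping, the argument is a verbatim transcription of the proof of Corollary \ref{Cor2} with ``interior point'' replaced by ``boundary point''.
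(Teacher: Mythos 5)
Your proposal is correct and is precisely the argument the paper intends: the paper leaves Corollary \ref{Cor9} without an explicit proof, pointing to ``the argument in Section \ref{section-NP},'' and the proof of the analogous Corollary \ref{Cor2} there is exactly the one-line observation that the law of the pair is absolutely continuous w.r.t.\ either side of the product identity with $U$ taken to be the whole target set. Your use of Theorem \ref{Thm3} with $U=\R$, the observation that $G^{\kappa,\kappa-8}(x)=|x|^{(\kappa-8)/\kappa}>0$ for $x\ne 0$, and the disintegration of the $\sigma$-finite product measure on the two coordinates are all exactly the intended steps.
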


\no{\bf Remark.} Since an SLE$_\kappa$ curve for $\kappa\in(0,4]$ does not visit any point on $\R\sem\{0\}$, Corollary \ref{Cor8} does not hold in the case $\kappa\in(0,4]$, and neither does the main result in \cite{AlbertsSheffield}.

\section{Decomposition of Planar Brownian motion} \label{Section-BM}
We now use the argument in the proof of Theorem \ref{Thm1} to decompose a planar Brownian motion. We modify the definition of $\Sigma$ such that the continuous functions take values in $\R^2$. Let $D\subset\R^2$ be a bounded domain that contains $0=(0,0)$. Let $B_t$ be a planar Brownian motion that starts from $0$, and $\tau_D$ the first time that $B_t$ exists $D$. Let $G_D(\cdot,\cdot)$ denote the  Dirichlet Green's function in $D$ for Brownian motion. Fix $z_0=(x_0,y_0)\in D\sem\{0\}$. Consider the Doob's $h$-transform of $B_t$ with $h=G_D(\cdot,z_0)$. This a diffusion process that satisfies the SDE:
$$dZ^{z_0}_t=dB_t+\frac{\nabla G_D(\cdot,z_0)|_{Z^{z_0}_t}}{G_D(Z_t^{z_0},z_0)}\,dt,\quad Z^{z_0}_0=0.$$
We may view $Z^{z_0}_t$ as $B_t$ conditioned to visit $z_0$ before exiting $D$.
Let $\PP$ and $\PP^D_{z_0}$ denote the laws of $(B_t)$ and $(Z^{z_0}_t)$, respectively. Then $\PP^D_{z_0}\tl \PP$, and
$$\frac{d\PP^D_{z_0}|_{\F^0_t\cap \Sigma_t}}{d\PP|_{\F^0_t\cap\Sigma_t}}=\frac{G_D(B_{t\wedge \tau_D},z_0)}{G_D(0,z_0)},\quad 0\le t<\infty.$$
Let $f(w)=\int_D G_D(w,z)dA(z)$. Then $f$ is the solution of the Dirichlet problem $\Delta f\equiv -1$ in $D$ and $f\equiv 0$ on $\pa D$. Define
$$\PP^D=\int_D \PP^D_{z}G_D(0,z)dA(z),\quad \Psi_t=\int_D G_D(B_{t\wedge \tau_D},z)dA(z)=f(B_{t\wedge \tau_D}).$$
From Proposition \ref{Prop-fubini}, we see that $\PP^D\tl \PP$ and
\BGE \frac{d\PP^D |_{\F^0_t\cap \Sigma_t}}{d\PP|_{\F^0_t\cap\Sigma_t}}=\Psi_t,\quad 0\le t<\infty.\label{PYPD}\EDE
From It\^o's formula and the PDE for $f$, we see that $M_t:=\Psi_t+\frac t2$, $0\le t<\tau_D$, is a continuous local martingale, and $\lim_{t\to\tau_D^-} M_t=\frac12{\tau_D}$. We further define $M_t=\frac12{\tau_D}$ for $t\ge \tau_D$. Since $D$ is bounded, the probability $\PP[\tau_D>N]$ decays exponentially as $N\to\infty$. Also note that $f$ is bounded. Thus, $(M_t)$ is a uniformly integrable martingale, and we have $$\EE[\frac{\tau_D}2|\F^0_t]=\EE[M_\infty|\F^0_t]=M_t=\Psi_t+\frac {t\wedge \tau_D}2,\quad 0\le t<\infty.$$
Define a process $(\theta_t)$ such that $\theta_t=\frac {t\wedge \tau_D}2$. Then we have
\BGE \EE[\theta_\infty|\F^0_t]-\theta_t=\Psi_t,\quad 0\le t<\infty. \label{theta-theta}\EDE
From Proposition \ref{Prop-trunc}, (\ref{PYPD}) and (\ref{theta-theta}), we see that
\BGE \PP^D=\K_{d\theta}(\PP).\label{end}\EDE
Apply first the operation $\ha\oplus\PP$ and then the pushforward by the map $(B,t)\mapsto (B,B(t))$ to both sides of (\ref{end}). Using a variation of Proposition \ref{Prop-Pkdecmp}, we obtain the following theorem.

\begin{Theorem}
  Let $\mA$ and $dA$ denote the Lebesgue measures on $\R$ and $\R^2$, respectively. Then we have
  $$ (\PP^D_z\oplus \PP)\overleftarrow\otimes  G_D(z,0)dA(z)=\PP\otimes\cal M,$$ 
  where $\cal M$ is a kernel from $\Sigma$ to $D$ defined by ${\cal M}(B,\cdot)=\frac 12 B_*(\mA|_{[0,\tau_D]})$.
\end{Theorem}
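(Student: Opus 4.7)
The plan is to follow the roadmap the author has already sketched just before the theorem statement: start from equation (\ref{end}), $\PP^D = \K_{d\theta}(\PP)$ with $\theta_t = \frac{t \wedge \tau_D}{2}$, apply the operation $\ha\oplus \PP$ to both sides, and then push forward by the map $\Phi:(B,t) \mapsto (B, B(t))$. The first preliminary step is to verify that $\PP^D_z$ is supported on $\Sigma^\oplus$ so that $\PP^D_z \oplus \PP$ is well defined. This is a standard property of Doob's $h$-transform by a Green's function: because $G_D(\cdot, z_0)$ has a logarithmic singularity at $z_0$, the process $Z^{z_0}_t$ almost surely converges to $z_0$ in finite time, so $\PP^D_z$ is concentrated on continuous functions with finite lifetime and terminal value $z$.

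Next I would formulate and invoke the planar-BM analog of Proposition \ref{Prop-Pkdecmp}: if $(\theta_t)$ is right-continuous increasing, $(\F^B_t)$-adapted, with $\theta_0 = \theta_{0^+} = 0$ and $\EE[\theta_\infty] < \infty$, then $\K_{d\theta}(\PP) \ha\oplus \PP = \PP \otimes d\theta$. The proof of Proposition \ref{Prop-Pkdecmp} goes through verbatim, since it rests only on the strong Markov property of the coordinate process and on Dynkin's $\pi$--$\lambda$ theorem applied to the cylinder sets $A \oplus_{t_0} B$; these arguments apply equally well with $\R^2$-valued coordinates. Applied to $\theta_t = (t \wedge \tau_D)/2$, which trivially satisfies the hypotheses since $D$ is bounded (so $\EE[\tau_D] < \infty$), together with (\ref{end}) this gives
$$\PP^D \ha\oplus \PP = \PP \otimes \tfrac{1}{2}\,\mathbf{1}_{[0,\tau_D]}(t)\,dt.$$
Using the definition $\PP^D = \int_D \PP^D_z G_D(0,z)\,dA(z)$ and Fubini (via Proposition \ref{Prop-fubini} adapted to this setting), the left-hand side equals $\int_D (\PP^D_z \ha\oplus \PP)\, G_D(0,z)\,dA(z)$.

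Now I apply the pushforward by $\Phi:(f,t) \mapsto (f, f(t))$ to both sides. On the left, for a pair $(f,t)$ distributed as $\PP^D_z \ha\oplus \PP$, one has $f(t) = z$ almost surely because $t$ is precisely the joining time and $\PP^D_z$-a.s.\ the first piece terminates at $z$. Hence $\Phi_*(\PP^D_z \ha\oplus \PP) = (\PP^D_z \oplus \PP) \otimes \delta_z$, and integrating against $G_D(0,z)\,dA(z)$, together with the symmetry $G_D(0,z) = G_D(z,0)$, produces $(\PP^D_z \oplus \PP) \overleftarrow{\otimes} G_D(z,0)\,dA(z)$. On the right, the pushforward sends $\PP \otimes \tfrac{1}{2}\mathbf{1}_{[0,\tau_D]}(t)\,dt$ to $\PP \otimes \mathcal{M}$ with $\mathcal{M}(B,\cdot) = \tfrac{1}{2} B_*(\mA|_{[0,\tau_D]})$, which is exactly the kernel in the statement.

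The main obstacle is not conceptual but bookkeeping: one must keep careful track of the $\overleftarrow{\otimes}$ versus $\otimes$ conventions, verify that $\PP^D_z$ really is supported on $\Sigma^\oplus$ with terminal value $z$ (so that the joining operation $\oplus$ produces continuous functions), and rephrase the proof of Proposition \ref{Prop-Pkdecmp} in the planar setting where the filtration is generated by an $\R^2$-valued coordinate process. Once these items are nailed down, the calculation is parallel to the one used to pass from (\ref{pre-main-thm}) to (\ref{int-U}) in Theorem \ref{Thm1}.
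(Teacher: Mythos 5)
Your proposal is correct and follows exactly the approach the paper itself sketches in the paragraph immediately preceding the theorem: establish $\PP^D=\K_{d\theta}(\PP)$, apply $\ha\oplus\PP$ via the planar analog of Proposition \ref{Prop-Pkdecmp}, and push forward by $(f,t)\mapsto(f,f(t))$. The details you spell out (that $\PP^D_z$ is supported on $\Sigma^\oplus$ with a.s.\ terminal value $z$, that $\Phi_*(\PP^D_z\ha\oplus\PP)=(\PP^D_z\oplus\PP)\otimes\delta_z$, and the symmetry $G_D(0,z)=G_D(z,0)$) are precisely what the paper leaves implicit under the phrase ``a variation of Proposition \ref{Prop-Pkdecmp}.''
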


\appendixpage
\addappheadtotoc
\appendix

\section{Boundedness of  SLE$_\kappa(\rho)$}\label{section-bdd}
In this section, we prove Proposition \ref{transience-0} (i). Fix $\kappa>0$ and $\rho\le \frac\kappa 2-4$. Fix $z_0=x_0+iy_0\in\HH$. Let $\gamma$ be an SLE$_\kappa(\rho)$ curve with force point at $z_0$. Let $\lambda(t)$, $0\le t<T_{z_0}$, be the driving function, and $g_t$ be the corresponding Loewner maps. Define $X_t$, $Y_t$, $D_t$ using (\ref{ZXYD}). Then the ODEs (\ref{XYD}) still hold, and now $X_t$ satisfies the SDE:
\BGE dX_t=-\sqrt\kappa dB_t+\frac{(\rho+2)X_t}{X_t^2+Y_t^2}dt.\label{X'}\EDE
Let $u(t)=\int_0^t \frac 1{X_s^2+Y_s^2}\,ds$. Let $\ha X_s=X_{u^{-1}(s)}$, $\ha Y_s=Y_{u^{-1}(s)}$, and $\ha R_s=\frac{\ha X_s}{\ha Y_s}$. Using (\ref{XYD}) and (\ref{X'}), we find that
$\ha Y_{s}=y_0 e^{-2s}$ and there is a standard Brownian motion $\ha B_s$ such that
\BGE d \ha R_s=\sqrt\kappa\sqrt{1+\ha R_s^2}d\ha B_s+(\rho+4) \ha R_sds.\label{haR}\EDE
Let $V_s=\arcsinh(\ha R_s)$. From It\^o's formula, we get
\BGE dV_s=\sqrt\kappa d\ha B_s+(\rho+4-\frac\kappa 2)\tanh(V_s)ds.\label{dL}\EDE
From $u(t)=\int_0^t \frac 1{X_s^2+Y_s^2}\,ds$, $\ha Y_{s}=y_0 e^{-2s}$, and $V_s=\arcsinh(\ha R_s)$, we get
\BGE T_{z_0}=\int_0^\infty \ha Y_{s}^2(1+\ha R_s^2)ds=y_0^2\int_0^\infty e^{-4s}\cosh^2(V_s)ds.\label{Tz0}\EDE

\begin{Lemma}
  Suppose $(V_s)$ solves (\ref{dL}) with $\rho\le \frac\kappa 2-4$. Then $(V_s)$ can be coupled with a standard Brownian motion $\til B_s$ such that $|V_s|\le |V_0+\sqrt\kappa\til B_s|$ for all $s$. \label{local}
\end{Lemma}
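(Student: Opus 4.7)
Set $c:=\rho+4-\kappa/2\le 0$; by the $V\mapsto-V$ symmetry of \eqref{dL}, I may and will assume $V_0\ge 0$. The plan is to (a) rewrite $|V|$ as a reflected diffusion driven by an auxiliary Brownian motion $\beta$, (b) compare it pathwise with a reflected Brownian motion $R$ driven by the same $\beta$, and (c) realize $R$ as the modulus of an ordinary Brownian motion $\til B$. For (a), I set $\beta_s:=\int_0^s\sign(V_r)\,d\hat B_r$, which is a standard Brownian motion by L\'evy's characterization (since $\{V=0\}$ has zero Lebesgue time-measure). Tanaka's formula combined with $\sign(V)\tanh(V)=\tanh(|V|)$ then gives
$$|V_s|=V_0+\sqrt\kappa\,\beta_s+c\int_0^s\tanh(|V_r|)\,dr+L^{V,0}_s,$$
so $|V|$ is the Skorokhod reflection at $0$ of the drifted BM on the right, whose drift $c\tanh(|V|)$ is everywhere $\le 0$.

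For (b), on the same probability space let $R_s:=V_0+\sqrt\kappa\,\beta_s+L^R_s$ be the reflected Brownian motion started at $V_0$ driven by the same $\beta$. Both $|V|$ and $R$ are one-dimensional reflected diffusions with identical constant diffusion coefficient $\sqrt\kappa$, identical driver $\beta$, identical starting value $V_0$, and with the drift of $|V|$ pointwise $\le$ that of $R$. The standard comparison theorem for reflected SDEs yields $|V_s|\le R_s$ for all $s$, almost surely. A clean direct proof: $|V|-R$ has no martingale part; on $\{|V|>R\}$ both $c\tanh(|V|)\le 0$ and $dL^{V,0}-dL^R\le 0$ (since then $|V|>0$ so $dL^{V,0}=0$); hence applying Tanaka to $(|V|-R)^+$ -- which starts at $0$ -- shows it never leaves $0$.

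For (c), on a suitable enlargement of the probability space carrying independent fair signs $(\xi_n)$, I assign $\xi_n$ to the $n$-th excursion of $R$ away from $0$ (with the initial excursion, if $V_0>0$, carrying the sign $+1$), define $U_s$ by $|U_s|=R_s$ and $\sign(U_s)=\xi_{n(s)}$ inside the $n$-th excursion, and set $\til B:=(U-V_0)/\sqrt\kappa$. By It\^o's excursion theory, $U$ so constructed is a standard Brownian motion with $U_0=V_0$, hence $\til B$ is a standard Brownian motion and $|V_0+\sqrt\kappa\til B_s|=|U_s|=R_s$. Combining (b) and (c) gives the desired pathwise bound $|V_s|\le R_s=|V_0+\sqrt\kappa\til B_s|$ for every $s\ge 0$.

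The genuinely delicate part is step (c): Tanaka's equation $dU=\sign(U)\sqrt\kappa\,d\beta$ enjoys only weak uniqueness, so $U$ cannot be recovered as a strong functional of $\beta$, and the excursion-theoretic construction on an enlarged probability space is essential. Steps (a) and (b) are standard applications of Tanaka's formula, L\'evy's theorem, and the comparison argument, which all go through without difficulty because the diffusion coefficient is the constant $\sqrt\kappa$ and the drift $c\tanh(\cdot)$ is Lipschitz.
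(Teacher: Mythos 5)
Your proposal is correct and follows essentially the same route as the paper: apply It\^o--Tanaka to pass from $V$ to $|V|$, feed the same auxiliary Brownian motion (your $\beta$, the paper's $B^{(1)}=B^{(2)}$) into both $|V|$ and the reflected Brownian motion $R=|V_0+\sqrt\kappa\til B|$, and conclude $|V|\le R$ from the sign of the drift together with the fact that $dL^{V,0}$ can only charge $\{|V|=0\}\subset\{|V|\le R\}$. The two small differences are cosmetic: you run the comparison cleanly by applying Tanaka to $(|V|-R)^+$ (a finite-variation process, so its local time at $0$ vanishes and $(|V|-R)^+$ is monotone nonincreasing), whereas the paper argues by contradiction from the last crossing time $s_0'$; and you make explicit, via It\^o excursion theory with i.i.d.\ excursion signs, the coupling that produces $\til B$ from $\beta$, a step the paper merely asserts (``we may couple $\til B$ with $\ha B$ so that $B^{(1)}=B^{(2)}$''). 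Both the explicit excursion construction and the abstract disintegration-based coupling work here, since the Tanaka SDE admits only weak solutions and one must enlarge the probability space either way; your write-up is the more complete of the two.
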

\begin{proof}
  From It\^o-Tanaka's formula, $|V_s|$ satisfies the  SDE:
  $$ d|V_s|= \sign(V_s)\sqrt\kappa d\ha B_s +(\rho+4-\frac\kappa 2)\tanh(|V_s|)ds+dL^V_s,$$
  where $L^V_\cdot$ is continuous and increasing, and stays constant on the intervals on which $|V_\cdot|>0$. On the other hand, if $\til B_s$ is a standard Brownian motion, then $|V_0+\sqrt\kappa\til B_s|$ satisfies the  SDE:
  $$d|V_0+\sqrt\kappa\til B_s|=\sign(V_0+\sqrt\kappa\til B_s)\sqrt\kappa d\til B_s+dL^B_s,$$
  where $L^B_\cdot$ is continuous and increasing. Define two standard Brownian motions $B^{(1)}_t$ and $B^{(2)}_t$ such that $B^{(1)}_t=\int_0^t \sign(V_s)  d\ha B_s$ and $B^{(2)}_t=\int_0^t \sign(V_0+\sqrt\kappa\til B_s)  d\til B_s$. We may couple $\til B_s$ with $\ha B_s$, and so with $V_s$, such that $B^{(1)}_\cdot=B^{(2)}_\cdot$. Since $\rho+4-\frac\kappa 2\le 0$, from the SDE for $|V_s|$ and $|V_0+\sqrt\kappa\til B_s|$, we find that $  |V_s|-\sqrt\kappa B^{(1)}_s-L^V_s$ is decreasing, and  $|V_0+\sqrt\kappa\til B_s|-\sqrt\kappa B^{(1)}_s$ is increasing.
  Suppose that there is $s_0\ge 0$ such that  $|V_{s_0}|> |V_0+\sqrt\kappa\til B_{s_0}|$. Then $s_0>0$. Let $s_0'\in(0,s_0)$ be such that $|V_{s_0'}|= |V_0+\sqrt\kappa\til B_{s_0'}|$ and $|V_{s}|> |V_0+\sqrt\kappa\til B_{s}|$ for $s\in(s_0',s_0]$. Then $|V_s|>0$ on $(s_0',s_0]$. So $L^V_s$ stays constant on $[s_0',s_0]$, which implies that $  |V_s|-\sqrt\kappa B^{(1)}_s$ is decreasing on $[s_0',s_0]$. Since $|V_0+\sqrt\kappa\til B_s|-\sqrt\kappa B^{(1)}_s$ is increasing, we conclude that $|V_{s}|- |V_0+\sqrt\kappa\til B_{s}|$ is decreasing on $[s_0',s_0]$, which contradicts that $|V_{s_0}|> |V_0+\sqrt\kappa\til B_{s_0}|$ and $|V_{s_0'}|= |V_0+\sqrt\kappa\til B_{s_0'}|$. Thus, $|V_{s}|\le |V_0+\sqrt\kappa\til B_{s}|$ for all $s\ge 0$.
\end{proof}

We will use the following well-known inequalities about Brownian motions:
\BGE \PP[|\sqrt\kappa B_t|\le at+b,\forall t\in[0,\infty)]\ge 1-2e^{-\frac{2ab}\kappa},\quad a,b>0.\label{Btab}\EDE

\begin{proof}[Proof of Proposition \ref{transience-0} (i).]
From (\ref{Tz0}) we get $T_{z_0} \le y_0^2\int_0^\infty e^{-4s}e^{2|V_s|}ds$.
From Lemma \ref{local}, $(V_s)$ may be coupled with a standard Brownian motion $\til B_s$ such that $|V_s|\le |V_0+\sqrt\kappa\til B_s|$.
Since $e^{2|V_0|}\le 4\cosh^2(V_0)=4(1+\ha R_0^2)=4\frac{|z_0|^2}{y_0^2}$, we get
$$ T_{z_0}\le  y_0^2\int_0^\infty e^{-4s}e^{2|V_0|+2\sqrt\kappa|\til B_s|}ds\le 4|z_0|^2\int_0^\infty e^{2\sqrt\kappa|\til B_s|-4s}ds.$$ 
From (\ref{Btab}) we see that the probability that $\sqrt\kappa |\til B_s|\le  s+b$ for all $s\ge 0$ is at least $1-2e^{-\frac 2\kappa  b}$, and on this event, from the above formula we get $T_{z_0}\le 2|z_0|^2e^{2b}$. Thus,
\BGE \PP^{\kappa,\rho}_{z_0}[T_{z_0}\le2|z_0|^2e^{2b}]\ge 1-2e^{-\frac 2\kappa b},\quad b>0.\label{T<}\EDE
This implies that $\PP^{\kappa,\rho}_{z_0}$-a.s., $T_{z_0}<\infty$.

Recall that $\lambda_t=\sqrt\kappa B_t+\int_0^t \frac{-\rho X_r}{X_r^2+Y_r^2}dr$, $0\le t<T_{z_0}$. The finiteness of $T_{z_0}$ implies that a.s.\ $\lim_{t\to T_{z_0}^-}B_t\in\R$. For the other term, consider
$$\int_0^{T_{z_0}} \frac{| X_t|}{X_t^2+Y_t^2}dt=\int_0^\infty |\ha X_s|ds=\int_0^\infty \ha Y_s |\sinh(V_s)|ds\le y_0\int_0^\infty e^{|V_s|-2s}ds.$$
Coupling $(V_s)$ with $(\til B_s)$ using Lemma \ref{local} and then using (\ref{Btab}), we find that $\PP^{\kappa,\rho}_{z_0}$-a.s., $\sup_{s\ge 0}(|V_s|-s)<\infty$. Thus, $\PP^{\kappa,\rho}_{z_0}$-a.s., $\int_0^{T_{z_0}} \frac{| X_t|}{X_t^2+Y_t^2}dt<\infty$, which implies that the limit $\lim_{t\to T_{z_0}^-} \int_0^t \frac{ X_r}{X_r^2+Y_r^2}dr$ exists and is finite. Thus, $\PP^{\kappa,\rho}_{z_0}$-a.s., $\lim_{t\to T_{z_0}^-}\lambda_t\in\R$.
\end{proof}

\begin{Corollary}
  Almost surely $\gamma([0,T_{z_0}))$ is bounded. \label{bound1}
\end{Corollary}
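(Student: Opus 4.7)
My plan is to combine Proposition \ref{transience-0}(i), which has already been established, with a standard deterministic estimate on Loewner hulls driven by a bounded continuous function over a bounded time interval. From part (i) we know that almost surely $T_{z_0}<\infty$ and that $\lim_{t\to T_{z_0}^-}\lambda_t\in\R$ exists. Consequently the driving function $\lambda$ extends continuously to the compact interval $[0,T_{z_0}]$ and is therefore uniformly bounded: there is a random $L=\sup_{t\in[0,T_{z_0}]}|\lambda_t|<\infty$ a.s.

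The next ingredient is the deterministic fact that a Loewner chain driven by a bounded continuous function on a bounded time interval has a bounded hull. More precisely, if $\mu\in C([0,T])$ with $T<\infty$ and $|\mu_t|\le L$ for all $t\in[0,T]$, then $K_T^\mu=\bigcup_{t\in[0,T)}K_t^\mu$ is contained in a compact set of the form $\{z\in\lin\HH:|z|\le L+C\sqrt{T}\}$ for some absolute constant $C$ (cf.\ \cite{Law1,RS}). One way to see this is via the standard growth estimate $|g_t(z)-z|\le 3\sqrt{t}$ valid for $z\in\HH$ with $|z|$ sufficiently large compared to $\sqrt{t}$: for such $z$ one has $|g_t(z)-\mu_t|\ge |z|-L-3\sqrt{T}>0$ for every $t\in[0,T]$, so $z\notin K_T^\mu$. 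Half-plane capacity control ($\hcap(K_T^\mu)=2T$) could also be invoked, but the elementary growth estimate suffices.

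Applying this deterministic statement pathwise with $T=T_{z_0}$ (finite a.s.) and with $L$ as above, we obtain that $K_t$ is contained in the bounded set $\{z\in\lin\HH:|z|\le L+C\sqrt{T_{z_0}}\}$ for all $t<T_{z_0}$. Since $\gamma([0,t])\subset\lin{K_t}$ for each such $t$, we conclude that $\gamma([0,T_{z_0}))$ is a.s.\ bounded. I do not foresee any genuine obstacle: the statement is essentially a soft corollary of part (i), the only point requiring a bit of care being the invocation of the classical Loewner hull estimate in its correct form.
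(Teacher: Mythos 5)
Your proof follows exactly the same route as the paper: Proposition \ref{transience-0}(i) gives that $T_{z_0}<\infty$ and $\lambda_{T_{z_0}^-}\in\R$ almost surely, hence the driving function is bounded on a finite time interval, and then a deterministic estimate on Loewner hulls (the paper cites Lemma 4.1 of \cite{Law1}) gives boundedness of the hull, hence of $\gamma([0,T_{z_0}))$. The only blemish is in your sketch of the deterministic step: the estimate ``$|g_t(z)-z|\le 3\sqrt{t}$ for $|z|$ large compared to $\sqrt{t}$'' is not correct as stated, since $|g_t(z)-z|$ is controlled by the radius of $K_t$, which can be much larger than $\sqrt{t}$ when the driving function has large oscillation (a cleaner route: for $|\Ree z|>L:=\sup_{s\le T}|\lambda_s|$ the quantity $\Ree g_t(z)-\lambda_t$ keeps the sign of $\Ree z-\lambda_0$ and stays bounded away from $0$, while for $\Imm z>2\sqrt{T}$ one has $(\Imm g_t(z))^2\ge (\Imm z)^2-4t>0$, so $K_T\subset\{|\Ree z|\le L,\,\Imm z\le 2\sqrt T\}$). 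Since you also cite \cite{Law1,RS} for the hull bound, this imprecision does not affect the validity of the argument.
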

\begin{proof}
  This follows from Proposition \ref{transience-0} (i) and Lemma 4.1 in \cite{Law1}.
\end{proof}

\begin{Lemma}
  For any bounded measurable $U\subset\lin\HH$, $\PP_\kappa$-a.s., $\Psi^{\kappa,\rho}_t(U)\to 0$ as $t\to\infty$. \label{Psi-infty}
\end{Lemma}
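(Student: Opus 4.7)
The plan is to exploit two features of $\Psi^{\kappa,\rho}_t(U)=\int_{U\cap\HH} M^{\kappa,\rho}_t(z)\,dA(z)$: it is a non-negative supermartingale in $t$ (so it converges $\PP_\kappa$-a.s.\ to some limit $\Psi^{\kappa,\rho}_\infty(U)\ge 0$), and its expectation tends to zero. The first point follows because each $(M^{\kappa,\rho}_t(z))_{t\ge 0}$ is a non-negative local martingale, hence a supermartingale with $\EE_\kappa[M^{\kappa,\rho}_t(z)]\le G^{\kappa,\rho}(z)$; then Tonelli gives the supermartingale property for $\Psi^{\kappa,\rho}_t(U)$ provided $\int_U G^{\kappa,\rho}(z)\,dA(z)<\infty$, which is automatic when $U$ is pre-compact in $\HH$ (the main case of interest for Theorem \ref{Thm1}).

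The key computation is the expectation. Applying the local Radon--Nikodym identity (\ref{RN-rho}) to the event $\Sigma_t\in\F^0_t\cap\Sigma_t$ and using that $\PP_\kappa$ is carried by $\Sigma_\infty\subseteq\Sigma_t$, I get
$$\EE_\kappa[M^{\kappa,\rho}_t(z)]=G^{\kappa,\rho}(z)\,\PP^{\kappa,\rho}_z(\Sigma_t)=G^{\kappa,\rho}(z)\,\PP^{\kappa,\rho}_z(T_z>t).$$
By Proposition \ref{transience-0}(i), $T_z<\infty$ $\PP^{\kappa,\rho}_z$-a.s., so $\PP^{\kappa,\rho}_z(T_z>t)\to 0$ pointwise in $z$. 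Dominated convergence (with the integrable bound $G^{\kappa,\rho}$) gives $\EE_\kappa[\Psi^{\kappa,\rho}_t(U)]\to 0$, and since $\Psi^{\kappa,\rho}_t(U)$ converges $\PP_\kappa$-a.s.\ to $\Psi^{\kappa,\rho}_\infty(U)\ge 0$, Fatou's lemma forces $\EE_\kappa[\Psi^{\kappa,\rho}_\infty(U)]=0$ and hence $\Psi^{\kappa,\rho}_\infty(U)=0$ $\PP_\kappa$-a.s.

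For a general bounded measurable $U\subset\lin\HH$, where $\int_U G^{\kappa,\rho}(z)\,dA(z)$ may be infinite because of the singularities of $G^{\kappa,\rho}$ at the origin and along $\R$, I would reduce to the pre-compact case by exhausting $U\cap\HH$ by an increasing sequence of sets $U_n$ pre-compact in $\HH$ and controlling $\Psi^{\kappa,\rho}_t(U\setminus U_n)$ uniformly in $t$. The input needed here is the quantitative tail bound (\ref{T<}) extracted from the proof of Proposition \ref{transience-0}(i), which yields $\PP^{\kappa,\rho}_z(T_z>t)\le 2(2|z|^2/t)^{1/\kappa}$; combined with the explicit form of $G^{\kappa,\rho}$ this gives enough quantitative decay on the complement of $U_n$ to push the argument through.

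The main obstacle is precisely this last reduction when integrability of $G^{\kappa,\rho}$ on $U$ fails: qualitative finiteness of $T_z$ is no longer enough, and one must use the quantitative tail estimate from (\ref{T<}) together with the local behaviour of $G^{\kappa,\rho}$ near $0$ and $\R$. In the pre-compact case that is actually used in Section \ref{section-NP}, however, the concise supermartingale-plus-expectation argument above is sufficient.
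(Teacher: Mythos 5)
Your first paragraph is a correct and slightly softer alternative to the paper's argument \emph{in the pre-compact case}: where the paper derives the quantitative bound
$\EE_\kappa[M^{\kappa,\rho}_t(z)]=G^{\kappa,\rho}(z)\,\PP^{\kappa,\rho}_z[T_z>t]\le 2^{1+\frac1\kappa}|z|^{\frac{\rho+2}\kappa+\frac{\rho^2}{8\kappa}}t^{-\frac1\kappa}$
from (\ref{T<}) and then integrates it, you instead invoke only the qualitative transience $T_z<\infty$ a.s.\ together with dominated convergence, and then finish with the same positive-supermartingale-plus-Fatou step. Both routes are sound when $\int_U G^{\kappa,\rho}\,dA<\infty$; yours avoids the explicit tail estimate at the cost of also needing the integrable dominating function $G^{\kappa,\rho}$.

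The gap is in your second paragraph, and it is more than a matter of detail. The lemma is stated for \emph{any} bounded measurable $U\subset\lin\HH$; when $0\in\lin U$ and $\rho/\kappa+\rho^2/(8\kappa)\le -2$ (which can happen, e.g.\ for $\rho=\kappa-8$ with $\kappa$ small), one has $\Psi_0^{\kappa,\rho}(U)=\int_U G^{\kappa,\rho}\,dA=\infty$, so ``controlling $\Psi_t^{\kappa,\rho}(U\setminus U_n)$ uniformly in $t$'' is impossible at $t=0$, and the dominated-convergence step has no integrable dominating function. The paper's proof does not split $U$: it applies the tail bound (\ref{T<}) directly to get, for $t>0$,
$\EE_\kappa[M^{\kappa,\rho}_t(z)]\le 2^{1+\frac1\kappa}|z|^{\frac{(\rho+4)^2}{8\kappa}}t^{-\frac1\kappa}$,
and the crucial observation you do not make is that the exponent on $|z|$ equals $(\rho+4)^2/(8\kappa)\ge 0$ for every $\rho$; the apparent singularity of $G^{\kappa,\rho}$ at the origin cancels against the factor $|z|^{2/\kappa}$ coming from the tail estimate. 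Hence $\EE_\kappa[\Psi_t^{\kappa,\rho}(U)]\le \pi 2^{1/\kappa}R^{2+\frac{(\rho+4)^2}{8\kappa}}t^{-1/\kappa}<\infty$ for $t>0$ on any bounded $U\subset\{|z|\le R\}$, after which $(\Psi_t^{\kappa,\rho}(U))_{t\ge 1}$ is a positive supermartingale tending to $0$ in mean, and Doob plus Fatou finish the proof. Your exhaustion idea can be made to work, but only after you notice this cancellation (e.g.\ via a maximal inequality and the same bound applied to $U\setminus U_n$), at which point the paper's direct estimate is simpler.
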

\begin{proof}
  Taking  $e^{2b}=t/(2|z_0|^2) $ in (\ref{T<}) for some $t>0$, we get
$$ \PP^{\kappa,\rho}_{z_0}[T_{z_0}> t ]\le 2^{1+1/\kappa}  (t/|z_0|^2)^{-1/\kappa},\quad t>0. $$
Using (\ref{RN-rho}), we get
$$\EE_\kappa[M^{\kappa,\rho}_t(z_0)]= G^{\kappa,\rho} (z_0) \PP^{\kappa,\rho}_{z_0}[T_{z_0}> t ]\le 2^{1+\frac 1\kappa} |z_0|^{\frac{\rho+2}\kappa+\frac{\rho^2}{8\kappa}}\cdot t^{-\frac1{ \kappa}},\quad t>0.$$ 
Suppose $|z|\le R$ for every $z\in U$. Then
$$\EE_\kappa[\Psi^{\kappa,\rho}_t(U)]=\int_U \EE_\kappa[M^{\kappa,\rho}_t(z)]dA(z)\le \pi 2^{ \frac 1\kappa}R^{2+\frac{\rho+2}\kappa+\frac{\rho^2}{8\kappa}}\cdot t^{-\frac1{ \kappa}},\quad t>0.$$
Thus, $\lim_{t\to\infty} \EE_\kappa[\Psi^{\kappa,\rho}_t(U)]=0$. Since $(\Psi^{\kappa,\rho}_t(U))$ is a positive supermartingale, from Doob's martingale convergence theorem, we see that $\PP_\kappa$-a.s., $\lim_{t\to\infty} \Psi^{\kappa,\rho}_t(U)$ exists. From Fatou's lemma,
$\EE_\kappa [\lim_{t\to\infty} \Psi^{\kappa,\rho}_t(U)]\le \lim_{t\to\infty} \EE_\kappa[\Psi^{\kappa,\rho}_t(U)]=0$. So we get the conclusion.
\end{proof}

\no{\bf Remark.} Proposition \ref{transience-0} (i) and Lemma \ref{Psi-infty} also hold if $\frac\kappa 2-4<\rho<\frac\kappa 2-2$. In that case we may use the estimate  $|V_s|\le |V_0|+\sqrt\kappa |\ha B_s|+(\rho+4-\frac\kappa 2)s$, which follows from (\ref{dL}) and that $|\tanh(x)|\le 1$ for $x\in\R$. Then we may apply (\ref{Btab}) with $a\in(0, \frac\kappa 2-2-\rho)$. 

\end{document}